\newtheorem{theorem}{Theorem}[section]
\newtheorem{lemma}[theorem]{Lemma}
\newtheorem{corollary}[theorem]{Corollary}
\newtheorem{proposition}[theorem]{Proposition}
\newtheorem{example}[theorem]{Example}
\theoremstyle{definition}
\newtheorem{assumption}[theorem]{Assumption}
\newtheorem{remark}[theorem]{Remark}
\newtheorem{definition}[theorem]{Definition}
\def\A{{\mathbb A}}
\def\F{{\mathbb F}}
\def\Q{{\mathbb Q}}
\def\Z{{\mathbb Z}}
\def\oU{\overline{U}}
\def\a{\alpha}
\theoremstyle{remark}
\newcommand{\subsubsubsection}{\@startsection{paragraph}{4}{\z@}%
 {1.0\Cvs \@plus.5\Cdp \@minus.2\Cdp}%
 {.1\Cvs \@plus.3\Cdp}%
 {\reset@font\sffamily\normalsize}
 }
\DeclareMathOperator{\Spec}{Spec}
\DeclareMathOperator{\Gal}{Gal}
\DeclareMathOperator{\id}{id}
\DeclareMathOperator{\Hom}{Hom}
\DeclareMathOperator{\Ker}{Ker}
\DeclareMathOperator{\Tr}{Tr}
\begin{document}

\title[$L$-polynomials of van der Geer--van der Vlugt curves]{The $L$-polynomials of van der Geer--van der Vlugt curves in characteristic $2$}

\author{Tetsushi Ito}
\address{
Department of Mathematics, Faculty of Science, Kyoto University
Kyoto, 606--8502, Japan}
\email{tetsushi@math.kyoto-u.ac.jp}

\author{Daichi Takeuchi}
\address{Department of Mathematics,
Institute of Science Tokyo,
2-12-1 Ookayama, Meguro-ku, Tokyo, 152-8551, Japan
}
\email{daichi.takeuchi4@gmail.com}

\author{Takahiro Tsushima}
\address{
Keio University School of Medicine,
4-1-1 Hiyoshi, Kohoku-ku,
Yokohama, 223-8521, Japan}
\email{tsushima@keio.jp}

\date{\today}

\subjclass[2020]{Primary: 14G15; Secondary: 11G20, 11M38}

\keywords{Artin--Schreier curves; van der Geer--van der Vlugt curves; maximal curves; $L$-polynomials; characteristic $2$}

\begin{abstract}
The van der Geer--van der Vlugt curves form a class of Artin--Schreier coverings of the projective line over finite fields.
We provide an explicit formula for their $L$-polynomials in characteristic $2$, expressed in terms of characters of maximal abelian subgroups of the associated Heisenberg groups.
For this purpose, we develop new methods specific to characteristic $2$ that exploit the structure of the Heisenberg groups
and the geometry of Lang torsors for $W_2$.
As an application, we construct examples of curves in this family attaining the Hasse--Weil bound.
\end{abstract}

\maketitle

\section{Introduction}
\label{Introduction}

Let $p_0$ be a prime number, $p$ a power of $p_0$, and $q$ a power of $p$.
A polynomial of the form $R(x) = \sum_{i=0}^{e} a_i x^{p^i} \in \F_q[x]$ \ $(a_e \neq 0)$
is called an $\F_p$-linearized polynomial.
Let $C_R$ be the affine curve over $\F_q$ defined by the equation
$y^p - y = x R(x)$.
Let $\overline{C}_R$ be the smooth compactification of $C_R$.
The curves $C_R$ and $\overline{C}_R$ are called the \emph{van der Geer--van der Vlugt curves}
associated with $R(x)$.
When $p = p_0$, they were studied by van der Geer--van der Vlugt \cite{GV}.
The van der Geer--van der Vlugt curves have interesting geometric and arithmetic properties; see, for example, \cite{BHMSSV, C, ITT, OS, Tatematsu}.
Moreover, these curves are generally expected to be related to the reductions of certain affinoids in the Lubin--Tate space; examples of this are given in \cite{IT, We}. Understanding explicitly the Frobenius eigenvalues on their \'etale cohomology is expected to shed light on the explicit local Langlands correspondence.

In \cite{TT}, the second and third authors obtained an explicit formula for the $L$-polynomial of $\overline{C}_R$. Let us briefly recall the results. For details, we refer to \cite{TT} or Section \ref{Section:Review} of this paper.
Assume $e \geq 1$, which implies that the genus of $\overline{C}_R$ is positive.
Let $\F$ be an algebraic closure of $\F_q$.
Let $H_R$ be the Heisenberg group acting on $C_{R,\F} \coloneqq C_R \otimes_{\F_q} \F$.
The underlying set of $H_R$ is naturally identified with a subset of $\F^2$.
The center $Z(H_R)$ is naturally identified with $\F_p$, and the quotient $H_R/Z(H_R)$ is abelian.
Fix a maximal abelian subgroup $A\subset H_R$. Throughout the paper, we assume that $A \subset \F_q^2$. Then the action of $A$ on $C_{R,\F}$ can be defined over $\F_q$,
and the quotient $C_R/A$ is isomorphic to the affine line $\A^1_{\F_q}$ over $\F_q$.
Therefore, we obtain a finite \'etale Galois covering $C_R\to \A^1_{\F_q}$ with Galois group $A$. 

Let $\ell$ be a prime number different from $p_0$.
For a finite abelian group $G$, we write $G^\vee$ for its group of $\overline{\Q}_{\ell}^{\times}$-valued characters 
$ \Hom_{\Z}(G,\overline{\Q}_{\ell}^{\times})$.
For a nontrivial character $\psi \in \F_p^{\vee}
\setminus \{1\}$, 
let $A_{\psi}^{\vee}$ be the set of characters $A\to \overline{\Q}_{\ell}^{\times}$
whose restrictions to $Z(H_R) \simeq \F_p$ are $\psi$. Using the Galois covering $C_R\to \A^1_{\F_q}$, we attach to each $\xi\in A_\psi^\vee$ a smooth $\overline{\Q}_{\ell}$-sheaf 
$\mathcal{Q}_{\xi}$ on  $\A^1_{\F_q}$.
It is proved in \cite[Lemma~3.4]{TT} that the $\ell$-adic cohomology group
$H_{\rm c}^1(\A^1_{\F},\mathcal{Q}_{\xi})$
is one-dimensional. 
Let $\Gal(\F/\F_q)$ denote the Galois group of the extension $\F/\F_q$. 
Let $\tau_{R,\xi,q} \in \overline{\Q}_{\ell}^{\times}$
denote the eigenvalue of
the geometric Frobenius element $\mathrm{Fr}_{q} \in \Gal(\F/\F_q)$
acting on $H_{\rm c}^1(\A^1_{\F},\mathcal{Q}_{\xi})$.
The following formula is obtained in \cite[Theorem~1.1]{TT}:
\[
L_{\overline{C}_R/\F_q}(T) \coloneqq \det(1-\mathrm{Fr}_qT \mid  
H^1(\overline{C}_{R,\F},
\overline{\Q}_{\ell}))
= \prod_{\psi \in \F_p^{\vee}
\setminus \{1\}} \prod_{\xi \in A_{\psi}^{\vee}}
(1-\tau_{R,\xi,q}T). 
\]
Therefore, a detailed analysis of the sheaf $\mathcal{Q}_\xi$ yields an explicit formula for the eigenvalue $\tau_{R,\xi,q}$, and consequently, for the $L$-polynomial. 

When $p_0$ is odd, we construct in \cite[A.2]{TT} an intermediate curve $C$, defined by the equation $y^p-y=c_Ax^2$ with $c_A\in\F_q^{\times}$ being explicitly computable, that lies between the covering $C_R\to \A^1_{\F_q}=C_R/A$, i.e., we have a factorization $C_R\to C\to \A^1_{\F_q}$. Using this factorization, it is shown in loc.~cit. that $\mathcal{Q}_\xi$ is isomorphic to an Artin--Schreier sheaf of the form $\mathcal{L}_\psi(c_Ax^2+b_\xi x)$, where $b_\xi\in\F_q$ is a constant determined by $\xi$. As a consequence of the Grothendieck--Lefschetz trace formula, the eigenvalue $\tau_{R,\xi,q}$ is expressed in terms of a quadratic Gauss sum and a value of the additive character $\psi$ in \cite[Corollary~A.8]{TT}. 
However, the method described above applies only in the case of odd characteristic, because the construction of $C$ relies on the fact that $A$ is annihilated by $p_0$, which is false when $p_0=2$. 
The aim of this paper is to develop a similar method that is applicable to the case of characteristic $2$, so that we obtain a formula for $\tau_{R,\xi,q}$ in this remaining case.

Let us explain our results in more detail. From now on, we assume that $p_0=2$. In this case, we show that $C_R\to \A^1_{\F_q}$ factors as 
\begin{equation}\label{CRSA}
C_R\to C_{S}\to \A^1_{\F_q},
\end{equation}
where $C_S$ is the van der Geer--van der Vlugt curve associated with $S(x)=x^p+s_0x$. Moreover, we prove that the covering $C_S\to\A^1_{\F_q}$ is Galois with Galois group $W_2(\F_p)$, where $W_2$ denotes the ring of Witt vectors of length $2$. The appearance of $W_2(\F_p)$ reflects the fact that $A$ has nontrivial $4$-torsion. 
We study the $\ell$-adic cohomology of $C_S$
using a geometric construction inspired by the work of Deligne--Lusztig.

Using the $W_2(\F_p)$-covering $C_S\to\A^1_{\F_q}$, we compute $\mathcal{Q}_\xi$ and $\tau_{R,\xi,q}$ as follows. First, we note that \eqref{CRSA} induces a surjection $A\to W_2(\F_p)$. Since $A$ is annihilated by $4$ (Lemma~\ref{basic} (4)) and $W_2(\F_p)$ is $\Z/4\Z$-free as in Lemma~\ref{free}, this map splits, and a splitting induces an isomorphism 
\begin{equation}\label{introsplit}
    A\simeq W_2(\F_p)\times \oU. 
\end{equation}
Here $\oU$ denotes the kernel of $A\to W_2(\F_p)$, which is naturally an $\F_p$-vector space of dimension $e-1$. 

We fix such an isomorphism in \eqref{introsplit}. 
Then the character $\xi\in A^\vee_\psi$ can be written as $\xi = \xi_{W_2} \boxtimes \xi_{\oU}$
for some characters $\xi_{W_2} \in W_2(\mathbb{F}_p)^{\vee}$ and  $\xi_{\oU} \in \oU^{\vee}$. 
Let 
$\mathcal{L}_{\xi_{W_2}}(z,w)$ denote the invertible 
$\overline{\Q}_{\ell}$-sheaf on
$W_{2,\F_p}$
defined by the Lang torsor 
$W_{2,\F_p} \to W_{2,\F_p},\ (x,y) \mapsto 
(x^p,y^p)-(x,y)$ 
and the character $\xi_{W_2} \in W_2(\F_p)$,
where $W_{2,\F_p} \simeq \Spec \F_p[z,w]$
 denotes the scheme of Witt vectors of length $2$.
For a morphism $f \times g \colon \A_{\F_q}^1
\to W_{2,\F_q}$, we write 
 $\mathcal{L}_{\xi_{W_2}}(f(s), g(s))$
for its pull-back  by $f\times g$. 
Using this notation,  we can now state our result on $ \mathcal{Q}_{\xi}$, which is proved in Proposition~\ref{qpp}, as follows: 
 \[
 \mathcal{Q}_{\xi} \simeq \mathcal{L}_{\xi_{W_2}}(s, \a s^2 + \beta_{\xi} s). 
 \]
Here, $\alpha$ and $\beta_{\xi}$ are certain elements of $\F_q$ whose definitions are given after Theorem~\ref{MainTheorem}.

This isomorphism together with the computation of the ranks of the cohomology groups of $\mathcal{Q}_\xi$ (see Lemma~\ref{rkQ}) gives us the following result on $\tau_{R,\xi,q}$. 
Fix a faithful character
$\xi_2 \colon W_2(\mathbb{F}_2)  \to \overline{\Q}_{\ell}^{\times}$ and put  $\sqrt{-1} \coloneqq \xi_2(1,0) \in \overline{\Q}_{\ell}^{\times}$. Since $W_2(\F_2)$ is isomorphic to $\Z/4\Z$, the element $\sqrt{-1}$ is a primitive $4$-th root of unity. Let $\Tr_{q/2} \colon W_2(\F_q) \to W_2(\F_2)$ denote the trace map.
We set $\xi_q \coloneqq \xi_2 \circ \Tr_{q/2} \in W_2(\F_q)^{\vee}$.

The first main theorem of this paper gives an explicit formula for the Frobenius eigenvalue $\tau_{R,\xi,q}$ associated with a van der Geer--van der Vlugt curve in terms of additive characters of the Witt vector group $W_2(\F_q)$.

\begin{theorem}[see Theorem~\ref{qppc}]
\label{MainTheorem}
Let $a_{W_2},b_{W_2}$ be elements of $\F_p$ defined below. 
Define 
\[ c_{R,\xi} \coloneqq (\alpha + (b_{W_2}/a_{W_2}^2))^{1/2} + a_{W_2} \beta_{\xi} \in \F_q. \] 
Then we have
\[
  \tau_{R,\xi,q} = \xi_q(c_{R,\xi}, 0)^{-1}
  \cdot (-1-\sqrt{-1})^{[\F_q : \F_2]}, 
\]
where we regard $(c_{R,\xi},0) \in W_2(\F_q)$.

\end{theorem}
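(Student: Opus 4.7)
The plan is to apply the Grothendieck--Lefschetz trace formula to $\mathcal{Q}_\xi$ using the explicit description of Proposition~\ref{qpp}, and then to evaluate the resulting character sum over $\F_q$ by a Witt-vector analog of completing the square followed by a $W_2$-Gauss sum identity. Since $H^1_{\mathrm c}(\A^1_\F,\mathcal{Q}_\xi)$ is one-dimensional by Lemma~\ref{rkQ} and the other compactly supported cohomology groups vanish for the pullback of a nontrivial Lang--Witt torsor sheaf on $\A^1$, the trace formula yields
\[
-\tau_{R,\xi,q}=\sum_{s\in\F_q}\widetilde{\xi}_q\bigl((s,\,\alpha s^2+\beta_\xi s)\bigr),
\]
where $\widetilde{\xi}_q$ is the character of $W_2(\F_q)$ attached to $\xi_{W_2}$ via the parameter $(a_{W_2},b_{W_2})\in W_2(\F_p)$ through the pairing $W_2(\F_p)\times W_2(\F_q)\to \overline{\Q}_\ell^{\times}$ induced by the Witt trace and $\xi_2$.

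Using the multiplication law $(a,b)\cdot(x,y)=(ax,\,a^2y+bx^2)$ on $W_2$ in characteristic~$2$, the argument of the character is the Witt vector $V(s)=(a_{W_2}s,\,(a_{W_2}^2\alpha+b_{W_2})s^2+a_{W_2}^2\beta_\xi s)$. The algebraic heart of the proof is to rewrite $V(s)$ modulo the image of $F-\mathrm{id}$ on $W_2(\F_q)$ (on which $\xi_q$ is trivial). Using the Witt addition law $(a,b)+(c,d)=(a+c,\,b+d+ac)$ together with the identity $(0,u^2)\equiv(0,u)\pmod{\mathrm{im}(F-\mathrm{id})}$ (which arises from $(0,u^2+u)=F(0,u)-(0,u)$), one transfers the $s^2$-contribution of the second coordinate into the first, simultaneously producing a linear-in-$s$ term whose coefficient works out precisely to $c_{R,\xi}=\sqrt{\alpha+b_{W_2}/a_{W_2}^2}+a_{W_2}\beta_\xi$. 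The square root appears because the factorization $a_{W_2}^2\alpha+b_{W_2}=\bigl(a_{W_2}\sqrt{\alpha+b_{W_2}/a_{W_2}^2}\bigr)^2$ is what permits the $s^2$-term to be ``moved'' through a Frobenius lift. After this rewriting, the sum factors as
\[
-\tau_{R,\xi,q}=\xi_q(c_{R,\xi},0)^{-1}\cdot G_q,
\]
where $G_q$ is an intrinsic $W_2$-Gauss sum, essentially $\sum_{t\in\F_q}\xi_q((t,0))$ after a linear change of variables.

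The last step is the evaluation $G_q=-(-1-\sqrt{-1})^{[\F_q:\F_2]}$. This follows from a Hasse--Davenport--style induction on $n=[\F_q:\F_2]$ for $W_2$-Gauss sums, with base case $\F_q=\F_2$ given by the direct computation $\sum_{s\in\F_2}\xi_2((s,0))=1+\sqrt{-1}=-(-1-\sqrt{-1})$. Combining this with the factorization above gives the claimed formula for $\tau_{R,\xi,q}$.

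The main obstacle is the characteristic-$2$ analog of completing the square carried out in the second paragraph. In odd characteristic the identity $\alpha s^2+\beta_\xi s=\alpha(s+\beta_\xi/(2\alpha))^2-\beta_\xi^2/(4\alpha)$ isolates a linear shift cleanly; in characteristic~$2$ this identity collapses, and the linear and quadratic contributions become coupled through the carry term $ac$ in Witt addition. Disentangling them requires a careful calculus modulo Frobenius coboundaries on $W_2(\F_q)$, and the square root $\sqrt{\alpha+b_{W_2}/a_{W_2}^2}$ emerges precisely from the need to transfer the quadratic contribution from the second Witt coordinate to the first via the relation $2(u,0)=(0,u^2)$. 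This is the essential point where the argument departs from the odd-characteristic analog of \cite[Corollary~A.8]{TT}.
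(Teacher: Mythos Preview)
Your proposal is correct and follows essentially the same route as the paper: the paper also starts from Proposition~\ref{qpp}, then carries out exactly the Witt-vector ``completing the square'' you describe (Lemma~\ref{plf}~(1), done at the sheaf level rather than at the level of trace functions, but with the same identities $(0,u^2)\equiv(0,u)$ and $(a_\xi s,a_\xi cs)=(a_\xi s+c,0)-(c,0)$), and evaluates the resulting $W_2$-Gauss sum $G_{\xi_{2^n}}$ via the one-dimensionality of $H^1_c$ (Lemma~\ref{HDGr}), which is precisely your Hasse--Davenport step. The only point worth tightening is that your Hasse--Davenport argument for $G_q$ implicitly needs $\dim H^1_c(\A^1_\F,\mathcal{L}_{\xi_2}(t,0))=1$; the paper obtains this by specializing Lemma~\ref{rkQ} and Lemma~\ref{QL} to $p=2$, $S(x)=x^2+x$, $\alpha=0$, and you could equally deduce it from the sheaf isomorphism you have just established together with Lemma~\ref{rkQ}.
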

The definitions of $\alpha$, $\beta_{\xi} \in \F_q$ and $a_{W_2}, b_{W_2} \in \F_p$ are as follows: 
\begin{itemize}
\item Let $\alpha \in \F_q$ satisfy
$\alpha^{p^{-1}} + \alpha = s_0 + 1$.
\item 
Let $\overline{A} \coloneqq A/Z(H_R)$, which naturally has a structure of an 
$\F_p$-vector space of dimension $e$. The vector space $\overline{A}$ is naturally 
regarded as a subspace of $\F_q$.
By Lemma~\ref{aF=Fa}, there exist separable $\F_p$-linearized polynomials $F_A(x), a(x) \in \F_q[x]$ such that 
\begin{gather}\label{oaa}
\begin{aligned}
\overline{A}= \{x \in \F \mid F_A(x)=0\}, \\ 
x^q + x = a \circ F_A(x) = F_A \circ a(x).
\end{aligned}
\end{gather}
Furthermore, $a(x)$ defines a homomorphism 
$\F_q \to \overline{A},\ x \mapsto a(x)$. 
Via the isomorphism \eqref{introsplit}, we define $r$ to be the projection
onto the second factor,
$r \colon \overline{A}\to\oU$. 
Let 
$\beta_{\xi} \in \mathbb{F}_q$ be the unique element such that 
$\xi_{\oU} \circ r \circ a(x)=\psi \circ  \Tr_{\F_q/\F_p}(\beta_{\xi} x)$ 
for all $x \in \mathbb{F}_q$.

\item
There exists a unique element $(a_{W_2}, b_{W_2}) \in W_2(\F_p)^{\times}$
such that
\[
  \xi_{W_2}(x,y) = \xi_p((a_{W_2},b_{W_2}) \cdot (x,y))
\]
for every $(x,y) \in W_2(\F_p)$.
Here, 
``$(a_{W_2},b_{W_2}) \cdot (x,y)$'' denotes the multiplication in the ring $W_2(\F_p)$.
\end{itemize}

We also study a relation between Frobenius eigenvalues of $\overline{C}_R$ and those of a twist of it.
Here, a \emph{twist} of $C_R$ means the van der Geer--van der Vlugt curve defined by the polynomial $R(x)+ax$ for some $a\in \F_q$. 

To state our result, we introduce some notation. Let $F_A(x)$ be an $\F_p$-linearized polynomial as in \eqref{oaa}. We may assume that $F_A$ is of the form $f^p+f$ for some polynomial $f$ (see Proposition~\ref{choice}). We write $F_A(x)=\sum_{i=0}^e b_i x^{p^i}$ with $b_e \neq 0$.
For $t \in \F_q$, we define $F^\ast(t)
\coloneqq \sum_{i=0}^e (b_i t)^{p^{-i}}$ and $R_t(x) \coloneqq  R(x)+F^\ast(t)^2 x$. Let 
$\pi_t \colon H_{R_t} \to V_{R_t}=V_R,\ (a,b) \mapsto a$ and
$A_t\coloneqq \pi_t^{-1}(\overline{A})$. 
We have an isomorphism 
$A_t \simeq W_2(\F_p) \times \oU$ similarly as \eqref{introsplit}. Then we can regard 
the character $\xi_{W_2} \boxtimes \xi_{\oU}$ as
 a character of $A_t$, which we denote by $\xi_t$. 

The following result is the second main theorem of this paper,
which asserts that a Frobenius eigenvalue of the twist $\overline{C}_{R_t}$
is the product of a Frobenius eigenvalue of $\overline{C}_{R}$
times a $4$-th root of unity, which can be calculated explicitly.

\begin{theorem}[see Theorem~\ref{qppt}]\label{MT2}
We have
\[
  \tau_{R_t,\xi_t,q} = \xi_q(t,t^2+c_{R,\xi}t )
  \cdot  \tau_{R,\xi,q}. 
\]
\end{theorem}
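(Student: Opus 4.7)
The plan is to derive Theorem \ref{MT2} as a direct consequence of Theorem \ref{MainTheorem} combined with a computation in $W_2(\F_q)$. Applying Theorem \ref{MainTheorem} separately to $R$ and to $R_t$, the constant factor $(-1-\sqrt{-1})^{[\F_q:\F_2]}$ depends only on $q$ and cancels in the ratio, yielding
\[
\tau_{R_t,\xi_t,q}\,\tau_{R,\xi,q}^{-1} = \xi_q\bigl((c_{R,\xi},0) - (c_{R_t,\xi_t},0)\bigr).
\]
Hence Theorem \ref{MT2} is equivalent to the $W_2(\F_q)$-identity $(c_{R,\xi},0) - (c_{R_t,\xi_t},0) = (t,\, t^2 + c_{R,\xi}\,t)$.

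In characteristic $2$, the Witt operations in $W_2$ read $(a,b)+(c,d)=(a+c,\,b+d+ac)$ and $-(b,0)=(b,\,b^2)$. A short calculation then gives
\[
(c_{R,\xi},0) - (c_{R_t,\xi_t},0) = \bigl(c_{R,\xi}+c_{R_t,\xi_t},\; c_{R_t,\xi_t}(c_{R,\xi}+c_{R_t,\xi_t})\bigr),
\]
so the desired $W_2$-identity reduces to the single scalar equation $c_{R_t,\xi_t} = c_{R,\xi} + t$ in $\F_q$.

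It remains to verify this scalar identity. Since $\xi$ and $\xi_t$ have the same $W_2(\F_p)$-component $\xi_{W_2}$ by construction, the pair $(a_{W_2}, b_{W_2})$ appearing in Theorem \ref{MainTheorem} is identical for both. Using that squaring is additive in characteristic $2$, the identity becomes
\[
(\alpha_t + \alpha)^{1/2} + a_{W_2}(\beta_{\xi_t} + \beta_\xi) = t,
\]
where $\alpha_t$ and $\beta_{\xi_t}$ are the analogues of $\alpha$ and $\beta_\xi$ for the pair $(R_t, \xi_t)$. I would verify this by analysing how the twist $R \mapsto R_t = R + F^*(t)^2 x$ affects each ingredient: the intermediate cover $C_{R_t}\to C_{S_t}\to \A^1_{\F_q}$ and its Artin--Schreier coefficient $s_0(t)$ (hence $\alpha_t$), and the character $\beta_{\xi_t}$ determined by $\xi_t$ via the composition $r \circ a$ from \eqref{oaa}. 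The specific form $F^*(t)^2$ of the twisting coefficient is tailored so that $\overline{A}_t = \overline{A}$ and the polynomials $F_A, a, F^*$ are common to $R$ and $R_t$, and so that the change in $\alpha$ accounts for the square-root summand while the shift in $\beta_\xi$ accounts for the $a_{W_2}$-multiple summand.

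The main obstacle is this last step: tracking the twist through the Heisenberg group $H_{R_t}$, the splitting $A_t \simeq W_2(\F_p) \times \oU$, and the intermediate cover $C_{S_t}$, then verifying that the induced shifts in $\alpha$ and $\beta_\xi$ combine to exactly $t$. I expect this to be proved by revisiting the construction underlying Theorem \ref{MainTheorem} with $R$ replaced by $R_t$ and running it in parallel, exploiting the explicit form of $F^*(t)^2$ to obtain the required cancellation.
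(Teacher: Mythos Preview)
Your approach is essentially the same as the paper's: reduce to the scalar identity $c_{R_t,\xi_t}=c_{R,\xi}+t$ and then invoke the Witt-vector identity $(c,0)-(t+c,0)=(t,\,t^2+ct)$. One correction to your expected mechanism: in the paper's setup the polynomials $F_A$, $a$, the retraction $r$, and the character $\xi_{\oU}$ are literally the same for $R$ and $R_t$, so $\beta_{\xi_t}=\beta_\xi$ exactly; the entire shift by $t$ comes from $\alpha$. Indeed $S_t(x)=x^p+(s_0+(t^{p^{-1}}+t)^2)x$, so one may take $\alpha_t=\alpha+t^2$, giving $(\alpha_t+\alpha)^{1/2}=t$ and $a_{W_2}(\beta_{\xi_t}+\beta_\xi)=0$. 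With this in hand your outline becomes a complete proof, identical in substance to the paper's.
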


As an application of Theorem~\ref{MT2}, we construct examples of $\F_{q^2}$-maximal curves by explicitly twisting $\F_{q^2}$-minimal ones. For the precise definitions of maximal and minimal curves, we refer the reader to Subsection \ref{Subsection:maximalcurve}.

The following theorem illustrates a concrete instance of this phenomenon.

\begin{theorem}
[see Theorem~\ref{main} (1)]\label{MT3}
Let $t \in \F_q$ be such that $\Tr_{q/2}(t)=1$.
Assume that $A \subset \F_q^2$ and 
$\overline{C}_R$ is $\F_{q^2}$-minimal.  
Then $\overline{C}_{R_t}$ is $\F_{q^2}$-maximal. 
\end{theorem}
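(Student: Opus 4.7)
The plan is to reduce the assertion directly to a character identity in the Witt ring $W_2(\F_2)$, using the $L$-polynomial formula recalled in the introduction together with Theorem~\ref{MT2}. Since $\F_{q^2}$-minimality of $\overline{C}_R$ is equivalent to $\tau_{R,\xi,q}^2 = q$ for every $\xi \in A_\psi^\vee$ with $\psi \in \F_p^\vee \setminus \{1\}$, and $\F_{q^2}$-maximality of $\overline{C}_{R_t}$ is equivalent to $\tau_{R_t,\xi_t,q}^2 = -q$ for every $\xi_t$, Theorem~\ref{MT2} reduces the theorem to showing
\[
\xi_q\bigl(t,\, t^2 + c_{R,\xi}\, t\bigr)^2 = -1
\]
for all admissible $\xi$. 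Here I use that the common splitting $A \simeq W_2(\F_p) \times \oU \simeq A_t$ makes the assignment $\xi \leftrightarrow \xi_t$ a bijection between the indexing sets of the two $L$-polynomials, so this single identity really does govern every Frobenius eigenvalue of $\overline{C}_{R_t}$.

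The main computational step is a general formula for $\xi_q(a,b)^2$ with $(a,b) \in W_2(\F_q)$. Writing $\xi_q(a,b)^2 = \xi_2\bigl(2 \cdot \mathrm{Tr}_{q/2}(a,b)\bigr)$ and using the characteristic-$2$ Witt-vector doubling $2 \cdot (x,y) = (0, x^2)$, I get $2 \cdot \mathrm{Tr}_{q/2}(a,b) = (0, A)$ where $A \in \F_2$ is the first coordinate of $\mathrm{Tr}_{q/2}(a,b)$. Combined with $\xi_2(0,1) = \xi_2(1,0)^2 = -1$ (since $(0,1) = 2 \cdot (1,0)$ in $W_2(\F_2)$), this yields $\xi_q(a,b)^2 = (-1)^A$. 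Finally, the first coordinate of Witt-vector addition carries no polynomial correction, so $A$ coincides with the classical trace $\mathrm{Tr}_{\F_q/\F_2}(a)$. Substituting $(a,b) = (t,\, t^2 + c_{R,\xi}\, t)$ and invoking the hypothesis $\mathrm{Tr}_{q/2}(t) = 1$ (read as the classical trace, which equals the first coordinate of any Witt-vector lift), the identity $\xi_q(t, t^2+c_{R,\xi} t)^2 = -1$ then holds uniformly in $\xi$, independently of the explicit value of $c_{R,\xi}$.

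Putting the two steps together gives $\tau_{R_t,\xi_t,q}^2 = -q$ for every $\xi_t$, hence $\overline{C}_{R_t}$ is $\F_{q^2}$-maximal. The conceptual content is the identity $\xi_q(a,b)^2 = (-1)^{\mathrm{Tr}_{\F_q/\F_2}(a)}$, which says that squaring a character of $W_2(\F_q)$ only remembers the first Witt coordinate modulo $2$; this is precisely what allows a classical trace condition on $t$ to flip every Frobenius eigenvalue by $-1$. The only delicate point I anticipate is the bookkeeping underlying the bijection $\xi \leftrightarrow \xi_t$: one must verify that the splittings of $A$ and $A_t$ into $W_2(\F_p) \times \oU$ are chosen compatibly so that $\xi_t$ really ranges over all characters of $A_t$ contributing to $L_{\overline{C}_{R_t}/\F_q}(T)$. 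Once this compatibility is pinned down, the proof collapses to the two-line Witt-vector calculation above.
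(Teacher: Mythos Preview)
Your argument is correct and coincides with the paper's proof. The only cosmetic difference is that the paper applies Theorem~\ref{qppt}~(2) directly at level $q^2$ (obtaining $\tau_{R_t,\xi_t,q^2}=\xi_{q^2}(t,t^2+ct)\cdot\tau_{R,\xi,q^2}$ and then computing $\xi_{q^2}(t,t^2+ct)=\xi_q(2(t,t^2+ct))=\xi_q(0,t^2)=(-1)^{\Tr_{q/2}(t)}$), whereas you apply Theorem~\ref{MT2} at level $q$ and square; since $\tau_{\bullet,q^2}=\tau_{\bullet,q}^2$ on the one-dimensional cohomology and $\xi_{q^2}(a,b)=\xi_q(a,b)^2$ for $(a,b)\in W_2(\F_q)$, these are the same computation.
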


Using the results stated above,
we give explicit examples of maximal curves in Proposition~\ref{2pp}, Corollaries~\ref{abc}, \ref{abcd}, and~\ref{maximalexample}.

\begin{remark}
Our approach in this paper is geometric in nature and based on studying the quotients of van der Geer--van der Vlugt curves
by abelian subgroups of the Heisenberg groups.
Using \emph{quadratic $\ell$-adic sheaves}
introduced by the second author \cite{Ta},
it is possible to study the Frobenius eigenvalues
by cohomological methods.
We plan to pursue the cohomological techniques in a subsequent paper.
\end{remark}
In \cite{BP}, Blache and Pierre give a factorization of the $L$-functions of the van der Geer--van der Vlugt curves (which they refer to as  quadratic Artin--Schreier curves) using exponential sums. Our approach differs from theirs and 
provides a geometric and representation-theoretic perspective, based on $\ell$-adic cohomology.

In \cite[Section~9.2]{KT}, 
 Katz and Tiep study in detail certain local systems constructed  using  Witt vectors of length two. In a special case, their local systems are closely related to those  considered in this article. See Remark~\ref{relation with KT} for details.

\subsubsection*{The organization of this paper}

This paper is organized as follows.
In Section \ref{Section:Review},
we summarize basic properties of van der Geer--van der Vlugt curves and their $L$-polynomials following \cite{TT}.
In Section \ref{Section:Geometry}, 
we study the geometry and cohomology of the curve $C_S$ defined by the equation
$y^p + y = x^{p+1} + a_0 x^2$
in some detail.
In Section \ref{Section:Quotients},
we study quotient maps between van der Geer--van der Vlugt curves. In particular, we prove that for any 
$\F_p$-linearized polynomial $R(x)$, there exists a finite \'etale map $C_R\to C_S$, where $C_S$ is the curve associated with $S(x)=x^p+s_0x$ for some $s_0 \in \F_q$. 
In Section \ref{Section:MainTheorem}, we prove Theorem~\ref{MainTheorem}
using results from Sections \ref{Section:Geometry} and \ref{Section:Quotients}.
In Section \ref{Section:maximalcurve},
we give applications of our results to
the construction of maximal curves.
In Section \ref{Section:Twists},
we prove Theorems~\ref{MT2} and~\ref{MT3}.
Finally, in Section \ref{Section:supersingular},
we show how supersingular elliptic curves appear as quotients of the curve $C_S$ studied in Section \ref{Section:Geometry}.

\subsubsection*{Notation}

We use the same notation as in \cite{ITT, TT}.

Let $p_0$ be a prime number, 
let $p$ be a power of $p_0$, and let $q$ be a power of $p$. We write $p=p_0^{f_0}$. 
Fix an algebraic closure $\mathbb{F}$ of $\F_{p_0}$. We regard finite extensions of $\F_{p_0}$ as subfields of $\mathbb{F}$.
Hence we have a tower of fields
$\F_{p_0} \subset \F_p \subset \F_q \subset \F$.
The trace map from $\F_q$ to $\F_p$ is denoted by 
$\Tr_{q/p} \colon \mathbb{F}_{q} \to \mathbb{F}_p$.

Throughout the paper, we fix a prime number $\ell$ different from $p_0$.
For a finite abelian group $G$, we write $G^\vee$ for the group of $\overline{\Q}_{\ell}^{\times}$-valued characters 
$ \Hom_{\Z}(G,\overline{\Q}_{\ell}^{\times})$.

For a smooth affine curve $C$ over $\F_q$, let 
$\overline{C}$ denote the smooth compactification of $C$. 

Let $X$ be a scheme over $\F_q$. 
Its base change to $\F$ is denoted by 
$X_{\F} \coloneqq X \otimes_{\F_q} \F$.

For two polynomials 
$f_1(x), f_2(x) \in \F_q[x]$, we write $f_1(x) \sim f_2(x)$ if there exists a polynomial $d(x) \in \F_q[x]$
such that $d^p-d=f_1-f_2$.

\section{Review of van der Geer--van der Vlugt curves}
\label{Section:Review}

Here we summarize basic properties of van der Geer--van der Vlugt curves following \cite{GV, TT}. 
Except for 
Lemma~\ref{basic} (4), Proposition~\ref{14} (3), and 
Lemma~\ref{Stone} (3), the results in this section are valid in all characteristics.

\subsection{The van der Geer--van der Vlugt curves associated with $\F_p$-linearized polynomials}

A polynomial $f(x)\in \F_q[x]$ is called \emph{$\F_p$-linearized} if it is additive,
that is, $f(x+y)=f(x)+f(y)$, and satisfies $f(ax)=af(x)$ for all $a\in\F_p$. 
Such a polynomial can be written as $f(x) = \sum_{i=0}^{e} c_i x^{p^i}$, with $c_i \in \F_q$.
 Let
 \[ \mathscr{A} \coloneqq \bigg\{ \sum_{i=0}^{e} c_i x^{p^i}
 \in \F_q[x] \ \bigg| \ e \in \mathbb{Z}_{\geq 0} \bigg\} \]
 denote the set of $\F_p$-linearized 
 polynomials with coefficients in $\F_q$. 
For $f(x),g(x) \in \mathscr{A}$, define 
$
(f \circ g)(x) \coloneqq f(g(x)) \in \mathscr{A}. 
$
Together with the usual addition, this gives $\mathscr{A}$ the structure of a ring under composition.

For $f(x) \in \mathscr{A}$, we define 
$\Ker f \coloneqq \{x \in \mathbb{F} \mid 
f(x)=0\}$,
which is an $\mathbb{F}_p$-vector space. If $f(x)$ has degree $p^e$ and is separable, then it has dimension $e$.

Let $R(x)\in\mathscr{A}$ be an $\F_p$-linearized polynomial of degree $p^e$ with $e\geq1$.
We define 
the smooth affine curve $C_R$ over $\F_q$ by the equation 
$
y^p - y = x R(x).
$
Its smooth compactification $\overline{C}_R$ is a smooth projective curve of genus $p^e(p-1)/2$;
see \cite[Proposition~6.4.1 (e)]{St}.
The curves $C_R$ and $\overline{C}_R$ are called the \emph{van der Geer--van der Vlugt curves}
over $\F_q$ associated with $R(x)$.

Our main aim in this section is to introduce a finite \'etale Galois covering
$F_A \circ \phi \colon C_R \to \A_{\mathbb{F}_q}^1$
and smooth $\overline{\Q}_{\ell}$-sheaves $\mathcal{Q}_{\xi}$ on $\A_{\mathbb{F}_q}^1$,
and decompose the $\ell$-adic \'etale cohomology of $C_{R,\F} \coloneqq C_R \otimes_{\F_q} \F$;
see Proposition~\ref{14}.

\subsection{Heisenberg groups and their maximal abelian subgroups}

Let
$R(x) = \sum_{i=0}^e a_i x^{p^i} \in \mathscr{A}$
be an $\F_p$-linearized polynomial with $e\geq1$ and $a_e\neq0$.
The curve $C_R$ carries an action by a certain Heisenberg group $H_R$. We recall this fact briefly.

We introduce the polynomials
$E_R(x) \in \F_q[x]$
and $f_R(x,y) \in \F_q[x,y]$ defined by
\begin{align} \label{-aa}
E_R(x) &\coloneqq R(x)^{p^e}+\sum_{i=0}^e (a_ix)^{p^{e-i}}, \\ \label{-a} 
f_R(x,y) &\coloneqq -\sum_{i=0}^{e-1}\left(\sum_{j=0}^{e-i-1}(a_i x^{p^i} y)^{p^j}
+(x  R(y))^{p^i}\right).
\end{align}

The polynomial $f_R(x,y)$ is the unique polynomial 
satisfying $f_R(0,0)=0$ and 
\begin{equation}\label{a}
f_R(x,y)^p-f_R(x,y)=-x^{p^e} E_R(y)+xR(y)+y R(x). 
\end{equation}
It follows that the polynomial 
\begin{equation}\label{om}
\omega_R(x,y)\coloneqq f_R(x,y)-f_R(y,x)
\end{equation}
satisfies $\omega_R(0,0)=0$ and 
\begin{equation}\label{omega}
\omega_R(x,y)^p-\omega_R(x,y)=y^{p^e} E_R(x)-x^{p^e}E_R(y). 
\end{equation}

We set 
\[
V_R \coloneqq \Ker E_R, 
\]
which is an $\F_p$-vector space 
of dimension $2e$, since $E_R(x) \in \mathscr{A}$ is separable and has degree $p^{2e}$.
\begin{definition}
\label{Heisenberg}
We define the \emph{Heisenberg group} $H_R$ associated with $R(x)$ by setting 
\[
H_R \coloneqq \{(a,b) \in V_R \times 
\mathbb{F} \mid b^p - b = aR(a)\}
\]
 whose group law is given 
 by 
\begin{equation}\label{gpl}
(a,b) \cdot (a',b') = (a+a',\, b+b'+f_R(a,a')). 
\end{equation}
The relation \eqref{a} ensures that this multiplication is well-defined and defines a group structure on $H_R$. 
\end{definition}

The following can be proved by a direct computation.

\begin{lemma}\label{2.3}
For $g=(a,b), g'=(a',b') \in H_R$, we have
\[
  g^{-1} = (-a,\,-b+f_R(a,a)) \quad \text{and} \quad
  [g,g'] = (0,\, \omega_R(a,a')),
\]
where $[g,g'] \coloneqq g g' g^{-1} g'^{-1}$ is the commutator of $g,g'$.
\end{lemma}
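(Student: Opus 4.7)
The plan is to reduce both identities to a direct computation from the group law \eqref{gpl}, the crucial input being the $\F_p$-bilinearity of $f_R$. I would first extract this property by inspecting the summands of \eqref{-a}: each is of the form $(a_i x^{p^i} y)^{p^j}$ or $(x R(y))^{p^i}$, so by additivity of Frobenius and the fact that $R$ is $\F_p$-linearized, $f_R$ is biadditive; it is $\F_p$-bilinear since $\lambda^{p^k} = \lambda$ for $\lambda \in \F_p$. Two consequences I will use repeatedly are $f_R(x,0) = f_R(0,y) = 0$, which together with \eqref{gpl} identifies $(0,0)$ as the identity of $H_R$, and $f_R(x,-y) = -f_R(x,y)$ (using $-1 \in \F_p$).

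For the inverse, I would set $g^{-1} = (a'', b'')$ and impose $g \cdot g^{-1} = (0,0)$. The first slot forces $a'' = -a$, and the second then gives $b'' = -b - f_R(a,-a) = -b + f_R(a,a)$. It remains to verify that $(-a,\,-b + f_R(a,a))$ actually lies in $H_R$, i.e., satisfies the defining relation $(b'')^p - b'' = a''R(a'')$. Substituting and using $b^p - b = aR(a)$, this reduces to showing $f_R(a,a)^p - f_R(a,a) = 2 a R(a)$; specializing \eqref{a} at $(x,y) = (a,a)$ supplies exactly this, modulo a term $-a^{p^e} E_R(a)$ which vanishes because $a \in V_R = \Ker E_R$.

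The commutator formula then follows by expanding $g g' g^{-1} g'^{-1}$ one factor at a time via \eqref{gpl}. The first coordinate telescopes to $0$ automatically. For the second coordinate, $\F_p$-bilinearity allows one to split $f_R(a+a',-a) = -f_R(a,a) - f_R(a',a)$, which cancels the $f_R(a,a)$ contributed by $g^{-1}$; a symmetric cancellation at the final step absorbs the $f_R(a',a')$ produced by $g'^{-1}$. What remains in the second slot is $f_R(a,a') - f_R(a',a)$, which is $\omega_R(a,a')$ by \eqref{om}.

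There is no genuine obstacle here; the proof is essentially bookkeeping. The only points requiring care are the sign manipulations (which behave uniformly via $\F_p$-linearity, even in characteristic $2$ where $-1 = 1$), and the appeal to $a \in V_R$ to ensure that the candidate $g^{-1}$ lands in $H_R$ rather than merely in $\F \times \F$.
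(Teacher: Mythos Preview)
Your proposal is correct and is precisely the ``direct computation'' the paper alludes to but omits. The key ingredients you identify---$\F_p$-bilinearity of $f_R$, the specialization of \eqref{a} at $(a,a)$ together with $E_R(a)=0$ to confirm $g^{-1}\in H_R$, and the telescoping via $f_R(a+a',-a)=-f_R(a,a)-f_R(a',a)$---are exactly what is needed, and nothing more.
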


\begin{lemma}[{\cite[Lemmas~2.3 and~2.4]{TT}}]
\label{basic}
\begin{enumerate}
    \item[{\rm (1)}] The center $Z(H_R)$ equals $\{ 0 \} \times \F_p$. 
    \item[{\rm (2)}] We have an isomorphism
    $H_R/Z(H_R) \overset{\sim}{\to} V_R$ via the projection $(a,b)\mapsto a$. 
    \item[{\rm (3)}] The map
    \[ H_R\times H_R\to Z(H_R), \quad  (g,g')\mapsto [g,g'] \]
    induces a non-degenerate symplectic pairing
    \[ 
    (H_R/Z(H_R))\times
    (H_R/Z(H_R)) \to Z(H_R). 
    \]
    Moreover, under the identifications $Z(H_R) \simeq \mathbb{F}_p$ and $H_R/Z(H_R) \simeq V_R$ given in {\rm (1) and (2)}, the above pairing is identified with the pairing 
    \[ \omega_R\colon V_R\times V_R\to\mathbb{F}_p, \quad  
    (x,y) \mapsto \omega_R(x,y), \]
    where $\omega_R(x,y)$ is given in \eqref{om}. 
    \item[{\rm (4)}] Assume $p_0=2$. 
    The order of any element of $H_R$ divides $4$. 
\end{enumerate}
\end{lemma}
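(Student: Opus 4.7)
The plan is to handle the four parts separately. Parts (1)--(3) are attributed to \cite[Lemmas 2.3 and 2.4]{TT} and hold in every characteristic, so I would simply invoke those results after checking that the formulations of $\omega_R$ and of the center match what is stated there. The substantive new content is part (4), the characteristic-$2$ claim that every element of $H_R$ has order dividing $4$.

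For (4), my approach is a two-step squaring using the group law \eqref{gpl} and Lemma \ref{2.3}. Writing $g = (a,b) \in H_R$, the multiplication rule gives
$g^2 = (2a,\, 2b + f_R(a,a))$, which collapses in characteristic $2$ to $g^2 = (0,\, f_R(a,a))$. Since this element has first coordinate $0$, it lies in $Z(H_R)$ by part (1); concretely, $f_R(a,a) \in \F_p$, which one can also read off directly by specializing \eqref{a} to $x = y = a$, using $a \in V_R = \Ker E_R$ and the fact that $2\, a R(a) = 0$ in characteristic $2$, so $f_R(a,a)^p = f_R(a,a)$.

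It then remains to square once more. On $\{0\} \times \F_p$ the group law \eqref{gpl} reduces to ordinary addition in $\F_p$, and doubling is zero there, so $g^4 = (g^2)^2 = (0,\, 2 f_R(a,a)) = (0,0) = e$. Nothing in this argument is subtle: the explicit form of the group law together with the characteristic $2$ assumption forces the conclusion, and the only small verification is that the second coordinate of $g^2$ really lands in $\F_p$. I therefore do not anticipate any genuine obstacle; the key point to record clearly in writing is the collapse $2a = 0,\ 2b = 0$ at the first squaring step, which is what turns the Heisenberg group from an exponent-$p$ object (as in odd characteristic) into an exponent-$4$ object in characteristic $2$, and which is precisely what motivates the appearance of $W_2(\F_p)$ later in the paper.
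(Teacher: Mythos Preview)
Your proposal is correct. The paper does not supply a proof of this lemma at all: it records the statement and attributes it to \cite[Lemmas 2.3 and 2.4]{TT}, so your plan to cite that reference for (1)--(3) is exactly what the paper does, and your direct computation for (4)---$g^2=(0,f_R(a,a))\in Z(H_R)$ and hence $g^4=(0,0)$---is the standard argument and is consistent with the commented-out remnant of a proof of the same fact visible in the source.
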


The Heisenberg group $H_R$ acts on $C_{R,\F}$ by
\begin{equation}\label{std}
(x,y) \cdot (a,b) = (x+a,\, y+b+f_R(x,a)). 
\end{equation}
for $(a,b)\in H_R$ and $(x,y)\in C_{R,\mathbb{F}}$.
This action is well-defined by \eqref{a} and gives a right action of $H_R$ on $C_{R,\mathbb{F}}$. 

Let
\[
\pi \colon H_R \to V_R, \quad  (a,b) \mapsto a. 
\]
By Lemma~\ref{basic} (3),
we can check that the maximal abelian subgroups $\{A\}$ of $H_R$ are in one-to-one correspondence with those subgroups $\{W\}$ of $V_R$ which are maximal totally isotropic with respect to
the symplectic pairing $\omega_R$ via
$A \mapsto \pi(A)$ and $\pi^{-1}(W) \mapsfrom W$.

Let $A \subset H_R$ be a maximal 
abelian subgroup.
From now on, we always assume the following. 

\begin{assumption}
\label{assumption}
$A \subset \mathbb{F}_q^2$.
\end{assumption}

Note that this condition is equivalent to requiring that the automorphism of $C_{R,\mathbb{F}}$ induced by an element of
$A$ is defined over $\mathbb{F}_q$. Indeed, for $(a,b)\in A$, the action \eqref{std} is defined over $\F_q$ if and only if $x+a$ and $y+b+f_R(x,a)$ belong to $ \Gamma(C_R,\mathcal{O}_{C_R})$, which is equivalent to  $a,b\in \F_q$ since $\F_q$ is algebraically closed in $\Gamma(C_R,\mathcal{O}_{C_R})$. 

\begin{remark}
When $p$ is odd, the condition $A\subset \F_q^2$ is equivalent to $\pi(A)\subset \F_q$ \cite[Lemma~2.6]{TT}. When $p$ is even, the former condition is strictly stronger than the latter, as the following example shows. 
\end{remark}
\begin{example}\label{example: GVcurves}
Suppose that $p$ is even, and let 
$R(x)=x^p+sx$ with $s\in \F_q$. Then $E_R(x)=x^{p^2}+x$, 
$f_R(x,y)=xy^p$, and $\omega_R(x,y)=x^py+xy^p$. 
Hence $V_R=\F_{p^2}$, and 
$\overline{A}=\F_p$ is a maximal totally isotropic subspace with respect to $\omega_R$. The inverse image $A=\pi^{-1}(\overline{A})$ is equal to 
\[
A=\{(a,b)\in\F_p\times\F\mid b^p+b=(1+s)a^2\}. 
\]
  Lemma~\ref{Hilbert90} below implies  that $A\subset \F_q^2$ if and only if $\Tr_{q/p}(1+s)=0$. 
\end{example}
\begin{lemma}\label{Hilbert90}
For $x\in \F_q$, there exists a solution $y\in \F_q$ to  
\[y^p-y=x\]
    if and only if $\Tr_{q/p}(x)=0$. 
\end{lemma}
\begin{proof}
    We have a commutative diagram 
    \[
    \xymatrix{
    0\ar[r]&\F_p\ar[d]^-0\ar[r]&\F\ar[d]\ar[r]^{y^p-y}&\F\ar[r]\ar[d]&0\\
0\ar[r]&\F_p\ar[r]&\F\ar[r]^-{y^p-y}&\F\ar[r]&0
    }
    \]
    with exact rows, where the vertical arrows are given by $y\mapsto y^q-y$. By the snake lemma, we obtain an exact sequence 
    \[
    \F_q\xrightarrow{y^p-y}\F_q\xrightarrow{\Tr_{q/p}}\F_p. 
    \]
    The assertion follows. 
\end{proof}

As Example~\ref{example: GVcurves} shows, when $p$ is even, the condition $A\subset \F_q^2$ cannot be detected from its image $\pi(A)$. Nevertheless, the following result holds, although we will not use it in the sequel.
\begin{lemma}
    Suppose $V_R\subset \F_q$. Then there exists a maximal abelian subgroup $A\subset H_R$ satisfying $A\subset \F_q^2$. 
\end{lemma}
\begin{proof}
Let $a \in V_R$, and choose $b \in \F$ such that $(a,b) \in H_R$. We set 
\[
\psi(a)\coloneqq(a^q,b^q)\cdot(a,b)^{-1}=(0,\Tr_{q/p}(aR(a)))\in H_R. 
\]
The above computation shows that $\psi(a)$ does not depend on the choice of $b$. 
By \eqref{a}, it follows that $\psi$ defines an additive map 
\[V_R\to Z(H_R)\cong \F_p.\]
Note that $\psi^{1/2}\colon V_R\to\F_p,\ a \mapsto \Tr_{q/p}(a R(a))^{1/2}$ is $\F_p$-linear. 

From the definition of $\psi$, it follows that a subgroup $A\subset H_R$ satisfies $A\subset\F_q^2$ if and only if $\pi(A)\subset \Ker\psi$, which is in turn equivalent to 
$\pi(A)\subset \Ker\psi^{1/2}$. 
Therefore, to conclude the proof, it suffices to find a maximal totally isotropic subspace of $V_R$ contained in $\Ker\psi^{1/2}$. 

Let $K\coloneqq \Ker\psi^{1/2}$. 
There are two cases: either $K=V_R$ or $\dim_{\F_p}K=\dim_{\F_p}V_R-1$. 
In the case $K=V_R$, any maximal totally isotropic subspace satisfies the required condition. 

Assume now that $K\subsetneq V_R$. In this case, $K$ has odd dimension, and the symplectic form $\omega_R|_K$ is degenerate. Let $\mathrm{Rad}(K)=K \cap K^{\perp}$ denote its radical, 
where 
\[K^{\perp} \coloneqq \{x \in V_R \mid \omega_R(x,y)=0\  \textrm{for any $y \in K$}\}.\]
Then $\dim_{\F_p}\mathrm{Rad}(K)=1$, since $\dim_{\F_p} K^{\perp}=\dim_{\F_p} V_R-\dim_{\F_p} K=1$ and
$\mathrm{Rad}(K)\neq \{0\}$.
Moreover, the form induced by $\omega_R|_K$ on $K/\mathrm{Rad}(K)$ is non-degenerate.
Hence any maximal totally isotropic subspace of $K$
has dimension $\frac12\dim_{\F_p} V_R$.
Therefore, it is also maximal totally isotropic in $V_R$. The assertion follows. 
\end{proof}

\begin{lemma}[{\cite[Lemma~2.5]{TT}}]
\label{F_A}
Assume $A \subset \mathbb{F}_q^2$.
Then there exists an $\F_p$-linearized polynomial $F_A(x) \in \mathscr{A}$ 
such that 
$F_A(x)$ divides $E_R(x)$ and $A = \pi^{-1}(\Ker F_A)$.
\end{lemma}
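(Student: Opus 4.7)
The plan is to take $F_A$ to be the monic polynomial whose roots are exactly the elements of $W := \pi(A)$, and then to verify the three required properties in turn: membership in $\mathscr{A}$, divisibility of $E_R$, and the identity $A = \pi^{-1}(\Ker F_A)$.

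First I would record the basic observations about $W$. Since $\pi$ is a group homomorphism (by the group law \eqref{gpl}), $W$ is an $\F_p$-subspace of $V_R$, and Assumption \ref{assumption} gives $W \subset \F_q$. Moreover, $A$ being maximal abelian forces $Z(H_R) = \Ker \pi \subset A$ (otherwise $A \cdot Z(H_R)$ would be a strictly larger abelian subgroup), so $A = \pi^{-1}(W)$.

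Next I would set $F_A(x) := \prod_{w \in W}(x - w)$. Since $W$ is a finite subset of $\F_q$, this polynomial lies in $\F_q[x]$, and tautologically $\Ker F_A = W$. To see that $F_A \in \mathscr{A}$, I would use the standard polynomial-identity argument: both $F_A(x+y) - F_A(x) - F_A(y)$ and $F_A(ax) - a F_A(x)$ (for $a \in \F_p$) have degree strictly less than $|W| = p^{\dim_{\F_p} W}$ in $x$, yet vanish for every $x \in W$, since $W$ is $\F_p$-stable; hence they are identically zero. This establishes additivity and $\F_p$-linearity, and so $F_A$ has the required shape $\sum c_i x^{p^i}$.

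Finally, the divisibility $F_A \mid E_R$ in $\F_q[x]$ follows because $E_R$ is separable (its $x$-coefficient, read off from \eqref{-aa}, is $a_e \neq 0$) and every root of $F_A$ lies in $W \subset V_R = \Ker E_R$; so $F_A \mid E_R$ already in $\F[x]$, and hence in $\F_q[x]$. Combined with $A = \pi^{-1}(\Ker F_A)$ from the first step, this completes the argument. No serious obstacle is anticipated; the only mildly non-trivial input is the classical $\F_p$-linearization of $\prod_{w \in W}(x-w)$, and everything else is a formal consequence of the maximality of $A$, Assumption \ref{assumption}, and the separability of $E_R$.
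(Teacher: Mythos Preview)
Your proof is correct. The paper does not give its own argument here, instead citing \cite[Lemma 2.5]{TT}, so there is nothing to compare against directly; your construction of $F_A$ as $\prod_{w\in W}(x-w)$ together with the standard degree-counting proof of $\F_p$-linearity and the separability of $E_R$ is exactly the expected route, and all the steps go through as you describe.
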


\subsection{Decomposition of the $\ell$-adic cohomology}

Let $A \subset H_R$ be a maximal abelian subgroup
satisfying $A \subset \mathbb{F}_q^2$.
Take an $\F_p$-linearized polynomial $F_A(x) \in \mathscr{A}$
as in Lemma~\ref{F_A}.

Let
$\phi \colon C_R\to\A^1_{\mathbb{F}_q},\ (x,y)\mapsto x$
denote the projection.
Using \cite[Lemma~2.7]{TT}, we can check that the quotient morphism $C_R\to C_R/A$ is isomorphic over $\F_q$ to the following
morphism:
\begin{equation}\label{phic}
F_A \circ \phi \colon C_R \to 
\A_{\mathbb{F}_q}^1, \quad 
(x,y) \mapsto F_A(x),
\end{equation}
which is a finite \'etale Galois covering with Galois group $A$. 

We decompose the $\ell$-adic cohomology group with compact support
$H^1_c(C_{R,\mathbb{F}},\overline{\Q}_\ell)$ using $F_A\circ\phi$.
To this end, we use the following notation.
For a nontrivial character $\psi \in \mathbb{F}_p^{\vee} \setminus \{1\}$, we define 
\begin{equation}
\label{apsi}
A^{\vee}_{\psi} \coloneqq \{\xi \in A^{\vee} \mid 
\xi|_{Z(H_R)}=\psi\}.
\end{equation}
Here, we regard $\psi$ as a character of
the center $Z(H_R)$ via
$Z(H_R) = \{ 0 \} \times \F_p \simeq \F_p,\ (0,a) \mapsto a$;
see Lemma~\ref{basic} (1).

We define smooth $\overline{\Q}_\ell$-sheaves
on $\A^1_{\mathbb{F}_q}$ using the construction given in
\cite[D\'efinition 1.7]{SommesTrig} and \cite[Proposition~10.1.23]{Fu}.

\begin{definition}
\label{smoothsheaf}
\begin{enumerate}
\item[{\rm (1)}] For $\xi\in A^\vee$, let $\mathcal{Q}_{\xi}$ be the smooth $\overline{\Q}_\ell$-sheaf on $\A^1_{\mathbb{F}_q}$
associated with the $A$-covering
$F_A\circ\phi \colon C_R \to \A_{\mathbb{F}_q}^1$
and  $\xi$.

\item[{\rm (2)}] For $\psi\in \F_p^\vee$, let $\mathcal{L}_{\psi}(x)$ be 
the smooth $\overline{\Q}_{\ell}$-sheaf on $\A^1_{\mathbb{F}_q}$
associated with the Artin--Schreier covering $\A^1_{\mathbb{F}_q} \to \A^1_{\mathbb{F}_q},\ y \mapsto y^p - y$ and  $\psi$.

\item[{\rm (3)}] 
For a polynomial $f(x) \in \F_q[x]$, we define  $\mathcal{L}_{\psi}(f(x)) \coloneqq f^{\ast} \mathcal{L}_{\psi}(x)$, where $f$ is viewed as a  morphism of $\F_q$-schemes $f \colon \A^1_{\mathbb{F}_q} \to \A^1_{\mathbb{F}_q}$. 
\end{enumerate}
\end{definition}

\begin{proposition}\label{14}
\begin{itemize}
\item[{\rm (1)}] 
We have a canonical isomorphism
\[
H_{\rm c}^1(C_{R,\mathbb{F}},\overline{\Q}_{\ell}) \overset{\sim}{\to} 
H^1(\overline{C}_{R,\mathbb{F}},\overline{\Q}_{\ell}).
\]

\item[{\rm (2)}] 
We have an isomorphism
\[ H_{\rm c}^1(C_{R,\mathbb{F}},\overline{\Q}_{\ell}) 
\simeq 
\bigoplus_{\psi \in \mathbb{F}_p^{\vee} \setminus \{1\}}H_{\rm c}^1(\A^1_{\F}, \mathcal{L}_{\psi}(xR(x))).
\]
For a nontrivial character $\psi \in \mathbb{F}_p^{\vee} \setminus \{1\}$, we have an isomorphism
\[
  H_{\rm c}^1(\A^1_{\F}, \mathcal{L}_{\psi}(xR(x)))
  \simeq  
  \bigoplus_{\xi \in A^{\vee}_{\psi}} H_{\rm c}^1(\A^1_{\F}, \mathcal{Q}_{\xi}).
\]

\item[{\rm (3)}] Assume $p_0=2$. Then, for nontrivial characters $\psi,\psi'\in \F_p^{\vee} \setminus \{1\}$,
we have an isomorphism 
\[
H_{\rm c}^1(\A^1_{\F}, \mathcal{L}_{\psi}(xR(x))) \simeq H_{\rm c}^1(\A^1_{\F},\mathcal{L}_{\psi'}(xR(x))).
\]
\end{itemize}
\end{proposition}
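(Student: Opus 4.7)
The plan is to handle the three parts in order, each using standard Galois-cover decompositions. For (1), I would apply the localization long exact sequence to the open immersion $C_R\hookrightarrow\overline{C}_R$. The essential input is that the complement $\overline{C}_R\setminus C_R$ consists of a single $\F_q$-rational point $P_\infty$: indeed, the projection $\phi\colon C_R\to\mathbb{A}^1_{\F_q}$ is the Artin--Schreier cover defined by $y^p-y=xR(x)$, and since $\deg(xR(x))=p^e+1$ is coprime to $p$, $\phi$ is totally ramified over $\infty\in\mathbb{P}^1$. The localization sequence then collapses to
\[
0\to H^0(\overline{C}_{R,\F},\overline{\Q}_\ell)\to H^0(P_\infty,\overline{\Q}_\ell)\to H^1_c(C_{R,\F},\overline{\Q}_\ell)\to H^1(\overline{C}_{R,\F},\overline{\Q}_\ell)\to 0,
\]
in which the first arrow is an isomorphism of one-dimensional spaces, yielding (1).

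For (2), I would decompose the cohomology using the tower $C_R\xrightarrow{\phi}\mathbb{A}^1_{\F_q}\xrightarrow{F_A}\mathbb{A}^1_{\F_q}$ in two stages. Since $\phi$ is the $\F_p\simeq Z(H_R)$-Artin--Schreier cover, one has $\phi_*\overline{\Q}_\ell\simeq\bigoplus_{\psi\in\F_p^\vee}\mathcal{L}_\psi(xR(x))$; dropping the trivial summand (whose $H^1_c$ on $\mathbb{A}^1_\F$ vanishes) and applying the Leray spectral sequence yields the first isomorphism. For the second, the composite $F_A\circ\phi$ is an $A$-Galois cover, so $(F_A\circ\phi)_*\overline{\Q}_\ell\simeq\bigoplus_{\xi\in A^\vee}\mathcal{Q}_\xi$; the step requiring care is the refined identification
\[ (F_A)_*\mathcal{L}_\psi(xR(x))\simeq\bigoplus_{\xi\in A^\vee_\psi}\mathcal{Q}_\xi, \]
which I would justify by decomposing $(F_A\circ\phi)_*\overline{\Q}_\ell$ first by the central subgroup $Z(H_R)$ and then by $A/Z(H_R)$, matching the $\psi$-isotypic component at the first stage with the family of $\xi$ satisfying $\xi|_{Z(H_R)}=\psi$. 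Taking $H^1_c(\mathbb{A}^1_\F,-)$ then gives the second isomorphism.

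For (3), I would exhibit an explicit change of variable. Since $p_0=2$, the order $p-1$ of $\F_p^\times$ is odd, so squaring is a bijection on $\F_p^\times$. Given non-trivial $\psi,\psi'$, write $\psi'=\psi\circ[a]$ for the unique $a\in\F_p^\times$ and pick $b\in\F_p^\times$ with $b^2=a$. The $\F_p$-linearity of $R$ gives $R(bx)=bR(x)$ and hence $(bx)R(bx)=a\cdot xR(x)$, so pullback along the automorphism $[b]\colon\mathbb{A}^1_{\F_q}\to\mathbb{A}^1_{\F_q}$, $x\mapsto bx$, identifies $\mathcal{L}_\psi(xR(x))$ with $\mathcal{L}_{\psi'}(xR(x))$, and the desired cohomological isomorphism follows since $[b]$ is an isomorphism of schemes.

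The main obstacle I anticipate is the refined decomposition in step (2): one must carefully track the correspondence between the Artin--Schreier character $\psi$ on $C_R$ and the central character $\xi|_{Z(H_R)}$ so that the $\mathcal{Q}_\xi$ are grouped according to $\xi|_{Z(H_R)}=\psi$ and not some other indexing by $\F_p^\vee$. The arguments for (1) and (3) are comparatively routine given the Artin--Schreier structure and the characteristic-$2$ assumption.
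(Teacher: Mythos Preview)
Your proposal is correct and matches the paper's approach: for (1) and (2) the paper simply cites \cite[Corollary 3.6 (1)]{TT} and \cite[Lemma 2.8]{TT}, and your sketches are precisely the standard localization and Galois-cover decomposition arguments underlying those references; for (3) the paper gives exactly your change-of-variable argument, writing $\psi(x)=\psi'(\alpha x)$ and using $\alpha^{1/2}\in\F_p$ (justified via perfectness of $\F_p$, equivalent to your observation that $p-1$ is odd).
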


\begin{proof}
The assertions (1) and (2) are 
shown in \cite[Corollary~3.6 (1)]{TT} and in \cite[Lemma~2.8]{TT}, respectively.  

We show (3). 
Take $\alpha \in \F_p^{\times}$ such that  
$\psi(x)=\psi'(\alpha x)$ for every $x \in \F_p$. 
Since $\F_p$ is a perfect field of characteristic $2$,
we have $\alpha^{1/2} \in \F_p$.
Then we have isomorphisms
\[ \mathcal{L}_{\psi}(xR(x)) \simeq 
\mathcal{L}_{\psi'}(\alpha xR(x)) =
\mathcal{L}_{\psi'}(\alpha^{1/2} xR(\alpha^{1/2} x)). \]
Making the change of variables $x'=\alpha^{1/2} x$ 
yields the desired isomorphism.
\end{proof}

\begin{lemma}[{\cite[Lemmas~3.3 and~3.4]{TT}}]
\label{rkQ}
For a nontrivial character $\psi\in\mathbb{F}_p^\vee$ and a character $\xi\in A_\psi^\vee$,
we have 
\begin{align*}
  &\dim H^i_{\rm c}(\A^1_{\F}, \mathcal{Q}_\xi) =
  \begin{cases}
  0 & (i \neq 1), \\
  1 & (i = 1), 
  \end{cases} \\
  & \dim H_{\rm c}^1(\A^1_{\F},\mathcal{L}_{\psi}(xR(x))) = p^e. 
\end{align*}
\end{lemma}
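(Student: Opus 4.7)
The plan is to first establish the formula $\dim H^1_{\rm c}(\A^1_\F, \mathcal{L}_\psi(xR(x))) = p^e$ by a direct Euler--Poincar\'e computation, and then to deduce the statement on $\mathcal{Q}_\xi$ by combining this with a Stone--von Neumann-type decomposition of $H^1_{\rm c}(C_{R,\F}, \overline{\Q}_\ell)$ as a representation of the Heisenberg group $H_R$.

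To prove the formula, note that $xR(x) = \sum_{i=0}^e a_i x^{p^i+1}$ has degree $p^e + 1$, which is coprime to $p_0$ since $p^e$ is a power of $p_0$. Hence $\mathcal{L}_\psi(xR(x))$ is a non-trivial smooth rank one Artin--Schreier sheaf on $\A^1_\F$ whose Swan conductor at infinity equals $p^e + 1$. The Grothendieck--Ogg--Shafarevich formula then yields $\chi_{\rm c}(\A^1_\F, \mathcal{L}_\psi(xR(x))) = 1 - (p^e+1) = -p^e$. The vanishing of $H^0_{\rm c}$ is automatic for a smooth sheaf on an affine variety, and that of $H^2_{\rm c}$ follows from non-triviality of the geometric monodromy at infinity, so $\dim H^1_{\rm c} = p^e$.

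For the statement on $\mathcal{Q}_\xi$, since $F_A \circ \phi$ is a finite \'etale $A$-covering, the decomposition $(F_A\circ\phi)_*\overline{\Q}_\ell = \bigoplus_{\xi \in A^\vee}\mathcal{Q}_\xi$ identifies $H^i_{\rm c}(\A^1_\F, \mathcal{Q}_\xi)$ with the $\xi$-isotypic component of $H^i_{\rm c}(C_{R,\F}, \overline{\Q}_\ell)$ under the $A$-action \eqref{std}, which extends to an $H_R$-action. Because $Z(H_R) = \{0\} \times \F_p$ acts freely on $C_R$ with quotient $\A^1$, one has $H^1_{\rm c}(C_{R,\F})^{Z(H_R)} = H^1_{\rm c}(\A^1_\F) = 0$, so every irreducible $H_R$-constituent of $H^1_{\rm c}(C_{R,\F})$ has non-trivial central character. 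By Stone--von Neumann, such constituents are precisely the representations $W_\psi$ of dimension $p^e$ indexed by $\psi \in \F_p^\vee \setminus \{1\}$, and the restriction decomposes as $W_\psi|_A \simeq \bigoplus_{\xi \in A^\vee_\psi} \xi$ with multiplicity one. Writing $H^1_{\rm c}(C_{R,\F}) = \bigoplus_{\psi \neq 1} m_\psi W_\psi$, one obtains $\dim H^1_{\rm c}(\A^1_\F, \mathcal{Q}_\xi) = m_\psi$ for $\xi \in A^\vee_\psi$. Substituting into Proposition~\ref{14}~(2) together with the first step yields $p^e = p^e \cdot m_\psi$, forcing $m_\psi = 1$. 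The vanishing $H^i_{\rm c}(\A^1_\F, \mathcal{Q}_\xi) = 0$ for $i \neq 1$ is then automatic since $\mathcal{Q}_\xi$ is a non-trivial smooth rank one sheaf on $\A^1$.

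The main technical obstacle is the representation-theoretic identification $W_\psi|_A \simeq \bigoplus_{\xi \in A^\vee_\psi} \xi$. Since $H_R/Z(H_R) \simeq V_R$ is abelian by Lemma~\ref{basic}~(2), the subgroup $A$ is normal in $H_R$; combined with the standard realization $W_\psi \simeq \mathrm{Ind}_A^{H_R}(\xi_0)$ for any $\xi_0 \in A^\vee_\psi$, Mackey's formula gives the restriction as $\bigoplus_{g \in H_R/A} g\xi_0$, and the non-degeneracy of the symplectic pairing of Lemma~\ref{basic}~(3) ensures that these $p^e$ translates exhaust $A^\vee_\psi$ without repetition. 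A more geometric alternative, which I would not pursue, is to compute $\mathrm{Sw}_\infty(\mathcal{Q}_\xi) = 2$ directly by analyzing the ramification of $F_A \circ \phi$ at infinity.
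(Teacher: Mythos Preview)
Your argument is correct. The paper does not supply its own proof of this lemma but simply cites \cite[Lemmas 3.3 and 3.4]{TT}, so there is no in-paper argument to compare against; what you have written is a valid self-contained proof. The Grothendieck--Ogg--Shafarevich computation for $\mathcal{L}_\psi(xR(x))$ is standard, and your Stone--von Neumann route for $\mathcal{Q}_\xi$ is essentially the representation-theoretic input that the present paper later records as Lemma~\ref{Stone}. One point worth making explicit in the Mackey step: the conjugates $g\xi_0$ for $g \in H_R/A$ are distinct because $\omega_R(\cdot,\pi(g))|_{\overline{A}}$ is an $\F_p$-\emph{linear} map $\overline{A}\to\F_p$, hence its image is $0$ or all of $\F_p$; since $\ker\psi\subsetneq\F_p$, the composite $\psi\circ\omega_R(\cdot,\pi(g))$ is trivial only when $\pi(g)\in\overline{A}^{\perp}=\overline{A}$. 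This is what rescues the argument when $p>p_0$ and $\psi$ is not faithful on $\F_p$. The alternative you mention---computing $\mathrm{Sw}_\infty(\mathcal{Q}_\xi)=2$ directly from the ramification of $F_A\circ\phi$ at infinity---is the other natural route and is likely closer to what the cited reference actually does.
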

Let $n$ be a positive integer such that $\F_q\subset\F_{p_0^n}$. Let 
$\mathrm{Fr}_{p_0^n} \in \Gal(\F/\F_{p_0^n})$
denote the geometric Frobenius element that sends $x \in \F$ to $x^{p_0^{-n}}$. 
We define
\begin{equation}\label{tau}
\tau_{R,\xi,p_0^n} \coloneqq \Tr(\mathrm{Fr}_{p_0^n} \mid 
H_{\rm c}^1(\A^1_{\F},\mathcal{Q}_{\xi})). 
\end{equation}
Since 
$H_{\rm c}^1(\A^1_{\F},\mathcal{Q}_{\xi})$
is one-dimensional by Lemma~\ref{rkQ},
the element $\tau_{R,\xi,p_0^n} $
is the eigenvalue of
$\mathrm{Fr}_{p_0^n}$
acting on 
$H_{\rm c}^1(\A^1_{\F},\mathcal{Q}_{\xi})$.
\begin{lemma}\label{Stone}
Let $\psi \in \F_p^{\vee} \setminus \{1\}$. 
\begin{itemize}
\item[{\rm (1)}] 
There exists a unique irreducible representation $\rho_{\psi}$ of $H_R$ with central character 
$\psi$. The degree of the representation equals $p^e$. 
\item[{\rm (2)}] 
The group $H_R$ naturally acts on $H_{\rm c}^1(\A^1_{\F},
\mathcal{L}_{\psi}(xR(x)))$. It is isomorphic to 
$\rho_{\psi}$ as $H_R$-representations.  
\item[{\rm (3)}] Assume that $p_0=2$ and $H_R \subset \F_q^2$. Then 
the geometric Frobenius $\mathrm{Fr}_q$ acts on $H^1(\overline{C}_{R,\F},\overline{\Q}_{\ell})$ as scalar multiplication by $\tau_{R,\xi,q}$
 for any $\xi \in A_{\psi}^{\vee}$. 
\end{itemize}
\end{lemma}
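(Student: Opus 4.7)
For (1) and (2), the plan is to invoke the Stone--von Neumann theorem for finite Heisenberg groups. By Lemma \ref{basic}, $Z(H_R)\simeq\F_p$ and $H_R/Z(H_R)\simeq V_R$ carries the nondegenerate symplectic pairing $\omega_R$, so $|H_R|=p^{2e+1}$, and any maximal abelian subgroup $A\subset H_R$ contains $Z(H_R)$ (so is normal, since $[H_R,H_R]\subset Z(H_R)$) and has index $p^e$. Extending $\psi$ to a character $\tilde\xi$ of $A$ and inducing to $H_R$ gives a representation of dimension $p^e$ with central character $\psi$; for $g\notin A$, one computes $\tilde\xi^g(a)=\tilde\xi(a)\psi([g,a])$, and nondegeneracy of the commutator pairing together with $A^\perp=A$ (maximal isotropy) yields $\tilde\xi^g\neq\tilde\xi$, so Mackey's criterion gives irreducibility. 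Uniqueness is forced by the count $\sum_{\psi\neq 1}(\dim\rho_\psi)^2=(p-1)p^{2e}=|H_R|-|Z(H_R)|$. For (2), the action of $H_R$ on $C_{R,\F}$ via \eqref{std} induces an action on $H^1_{\rm c}(C_{R,\F},\overline{\Q}_\ell)$; the decomposition of Proposition \ref{14} (2) is precisely the $Z(H_R)$-isotypic decomposition, so $Z(H_R)$ acts on $H^1_{\rm c}(\A^1_\F,\mathcal{L}_\psi(xR(x)))$ through $\psi$, and by Lemma \ref{rkQ} this summand has dimension $p^e$, forcing it to be $\rho_\psi$ by (1).

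For (3), the hypothesis $H_R\subset\F_q^2$ ensures that every automorphism of $C_R$ coming from $H_R$ is defined over $\F_q$, so $\mathrm{Fr}_q$ commutes with the $H_R$-action on $H^1_{\rm c}(\A^1_\F,\mathcal{L}_\psi(xR(x)))$. By (2) this module is isomorphic to the irreducible representation $\rho_\psi$, and Schur's lemma forces $\mathrm{Fr}_q$ to act as a scalar $\lambda_\psi\in\overline{\Q}_\ell^\times$. The refined $A$-isotypic decomposition
\[
H^1_{\rm c}(\A^1_\F,\mathcal{L}_\psi(xR(x)))\simeq\bigoplus_{\xi\in A_\psi^\vee}H^1_{\rm c}(\A^1_\F,\mathcal{Q}_\xi)
\]
of Proposition \ref{14} (2) is itself Frobenius-equivariant (again because $A\subset\F_q^2$), and each summand is one-dimensional by Lemma \ref{rkQ}; comparing the two descriptions gives $\lambda_\psi=\tau_{R,\xi,q}$ for every $\xi\in A_\psi^\vee$. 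Next, the change of variable $x'=\alpha^{1/2}x$ used in the proof of Proposition \ref{14} (3) is defined over $\F_p\subset\F_q$ precisely because we are in characteristic $2$, so the isomorphism there is Galois-equivariant and $\lambda_\psi$ is in fact independent of $\psi$. Combined with the decomposition $H^1(\overline{C}_{R,\F},\overline{\Q}_\ell)\simeq\bigoplus_{\psi\neq 1}H^1_{\rm c}(\A^1_\F,\mathcal{L}_\psi(xR(x)))$ from Proposition \ref{14} (1)--(2), this shows that $\mathrm{Fr}_q$ acts as the single scalar $\tau_{R,\xi,q}$ on the whole of $H^1(\overline{C}_{R,\F},\overline{\Q}_\ell)$ for any admissible $\xi$.

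The main obstacle I anticipate is not a single difficult step but the careful bookkeeping of Galois structures throughout: confirming that the $H_R$-action on cohomology genuinely commutes with $\mathrm{Fr}_q$ under the full hypothesis $H_R\subset\F_q^2$ (rather than merely some $A\subset\F_q^2$), that the Heisenberg isotypic decomposition of Proposition \ref{14} (2) truly respects Frobenius, and that the characteristic-$2$ simplification in Proposition \ref{14} (3) upgrades to a Galois-equivariant identification. Once these formal compatibilities are secured, the conclusion is essentially a one-line application of Schur's lemma to an irreducible representation of the finite Heisenberg group $H_R$.
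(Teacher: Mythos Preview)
Your approach is correct and essentially the same as the paper's, which simply cites the Stone--von Neumann theorem for (1), uses semisimplicity plus the dimension count from Lemma \ref{rkQ} for (2), and invokes Schur's lemma together with Proposition \ref{14} for (3) without spelling out the Galois-equivariance of the isomorphism in Proposition \ref{14} (3) that you correctly verify. One minor slip: in your uniqueness count, $(p-1)p^{2e}=|H_R|-|H_R/Z(H_R)|$ (the one-dimensional representations, i.e.\ characters of $V_R$, account for $p^{2e}$), not $|H_R|-|Z(H_R)|$; the argument is unaffected.
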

\begin{proof}
(1) The claim follows from 
the Stone--von Neumann theorem in \cite[Exercise 4.1.8]{Bu}. 

(2) Since $H_R$ acts on $C_{R,\F}$, it acts on the cohomology $H_c^1(C_{R,\F},\overline{\Q}_{\ell})$. 
Recall $\phi \colon C_R \to \A^1_{\F_q},\ 
(x,y) \mapsto x$. 
Then we have 
\[
H_c^1(C_{R,\F},\overline{\Q}_{\ell}) \simeq H_c^1(\A^1_{\F},\phi_\ast\overline{\Q}_{\ell}). 
\]
Since taking $\psi$-isotypic part commutes with taking cohomology, we obtain an isomorphism 
\[H_{c,\psi}^1 \simeq H_{\rm c}^1(\A^1_{\F},\mathcal{L}_{\psi}(xR(x))),
\]
where $H_{c,\psi}^1$ denotes the $\psi$-isotypic part of $H_c^1(C_{R,\F},\overline{\Q}_{\ell})$. 
Thus, the first assertion in (2) follows. 
The center $Z(H_R)$ acts on $H_{\rm c}^1(\A^1_{\F},\mathcal{L}_{\psi}(xR(x)))$ via the character $\psi$. Therefore, by (1) and the semisimplicity, we know that $H_{\rm c}^1(\A^1_{\F},\mathcal{L}_{\psi}(xR(x)))$ is isomorphic to the direct sum of finitely many copies of $\rho_\psi$. 
Since the dimension of $H_{\rm c}^1(\A^1_{\F},\mathcal{L}_{\psi}(xR(x))) $ equals $ p^e$ by Lemma~\ref{rkQ}, the multiplicity should be $1$. 

(3) The $H_R$-representation $H_{\rm c}^1(\A^1_{\F},
\mathcal{L}_{\psi}(xR(x)))$ is irreducible by 
(2). Moreover, if $H_R \subset \F_q^2$, then the actions of $H_R$ and of $\mathrm{Fr}_q$ commute. Therefore, 
by Schur's lemma, 
$\mathrm{Fr}_q$ acts on $H_{\rm c}^1(\A^1_{\F},
\mathcal{L}_{\psi}(xR(x)))$ as scalar multiplication by $\tau_{R,\xi,q}$
 for any $\xi \in A_{\psi}^{\vee}$.  
The claim follows from Proposition~\ref{14}. 
\end{proof}
We define the \emph{$L$-polynomial} of $\overline{C}_R$ by
\[
L_{\overline{C}_R/\F_q}(T) \coloneqq \det(1-\mathrm{Fr}_qT \mid  
H^1(\overline{C}_{R,\F},
\overline{\Q}_{\ell})).
\]

In the following theorem,
we decompose the $L$-polynomial of
the curve $\overline{C}_R$
in terms of the eigenvalues $\tau_{R,\xi,q}$. 

\begin{theorem}[{\cite[Theorem~1.1]{TT}}]
Assume $A \subset \F_q^2$. 
Then we have 
\[
L_{\overline{C}_R/\F_q}
(T)=\prod_{\psi \in \F_p^{\vee}
\setminus \{1\}} \prod_{\xi \in A_{\psi}^{\vee}}
(1-\tau_{R,\xi,q}T). 
\]
\end{theorem}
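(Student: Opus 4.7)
The strategy is to assemble the ingredients collected earlier in Section \ref{Section:Review}: starting from the definition of $L_{\overline{C}_R/\F_q}(T)$, I would pass to compactly supported cohomology of the affine curve via Proposition \ref{14} (1), decompose that cohomology according to the characters of $\F_p$ and of $A$ via Proposition \ref{14} (2), and finally use Lemma \ref{rkQ} and the definition \eqref{tau} of $\tau_{R,\xi,q}$ to read off the individual Frobenius eigenvalues.

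The plan is as follows. First I would write
\[
L_{\overline{C}_R/\F_q}(T) = \det\!\bigl(1 - \mathrm{Fr}_q T;\ H^1(\overline{C}_{R,\F},\overline{\Q}_\ell)\bigr)
\]
and use the canonical isomorphism of Proposition \ref{14} (1) to replace the right-hand side with the same determinant computed on $H_c^1(C_{R,\F},\overline{\Q}_\ell)$; this isomorphism is Galois-equivariant since it is induced by a morphism of $\F_q$-schemes. Next, I would invoke Proposition \ref{14} (2), which furnishes an isomorphism
\[
H_c^1(C_{R,\F},\overline{\Q}_\ell) \simeq \bigoplus_{\psi \in \F_p^\vee \setminus \{1\}} \bigoplus_{\xi \in A_\psi^\vee} H_c^1(\mathbb{A}^1_{\F},\mathcal{Q}_\xi).
\]
The point here is that both the Galois covering $F_A \circ \phi \colon C_R \to \mathbb{A}^1_{\F_q}$ of \eqref{phic} and the sheaves $\mathcal{Q}_\xi$ of Definition \ref{smoothsheaf} (1) are defined over $\F_q$, thanks to Assumption \ref{assumption} ($A \subset \F_q^2$); since the characters $\xi$ are valued in $\overline{\Q}_\ell^\times$ and the $A$-action on $H_c^1(C_{R,\F},\overline{\Q}_\ell)$ commutes with $\mathrm{Fr}_q$, the isotypic decomposition is $\mathrm{Fr}_q$-equivariant.

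Having a Frobenius-equivariant direct-sum decomposition, the determinant factors as a product over $(\psi, \xi)$. By Lemma \ref{rkQ}, each summand $H_c^1(\mathbb{A}^1_{\F},\mathcal{Q}_\xi)$ is one-dimensional, so the local contribution is simply $1 - \lambda_{\psi,\xi} T$, where $\lambda_{\psi,\xi}$ is the sole eigenvalue of $\mathrm{Fr}_q$ on that line. Definition \eqref{tau} identifies this eigenvalue with $\tau_{R,\xi,q}$. Assembling these factors yields the asserted formula.

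The only non-routine point is ensuring that the decomposition in Proposition \ref{14} (2) is genuinely Galois-equivariant and that no hidden permutation of the summands by $\mathrm{Fr}_q$ takes place; this is where Assumption \ref{assumption} plays its essential role, as it guarantees that the covering and hence each individual sheaf $\mathcal{Q}_\xi$ descends to $\F_q$. Once this is verified, the remaining steps are formal manipulations with determinants on one-dimensional spaces.
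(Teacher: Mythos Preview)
Your argument is correct and is exactly how the result follows from the ingredients collected in Section~\ref{Section:Review}. Note, however, that the paper does not supply its own proof of this theorem: it is cited directly from \cite[Theorem~1.1]{TT}, and Proposition~\ref{14}, Lemma~\ref{rkQ}, and the definition~\eqref{tau} are themselves imported from \cite{TT} precisely so that the statement can be invoked without further justification.
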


\section{Geometry of the curve $y^p + y = x^{p+1} + a_0 x^2$}
\label{Section:Geometry}
In this section, 
we study in detail the geometry of the curve $C_S$ defined by the equation
$y^p + y = x S(x)$,
where $S(x)$ is a polynomial of the form $ x^{p} + a_0 x$ ($a_0 \in \F_q$). 

A key fact is that the maximal abelian subgroups of
$H_S$ are isomorphic 
to the group of Witt vectors $W_2(\F_p)$ of length $2$;
see Lemma~\ref{maximalabeliansubgroupwitt}.
We study the $\ell$-adic cohomology of the curve $C_S$
with the aid of the Lang torsor over $W_2$.
Our methods rely on a geometric construction similar to that of Deligne--Lusztig.

Coulter previously studied the number of $\F_p$-rational points on the curve $\overline{C}_S$ by different methods; see \cite{C}.

\subsection{Witt vectors of length $2$}

We briefly recall the ring of Witt vectors of length $2$.
For basic results on Witt vectors,
 see \cite[Chapter II, Section 6]{Se}.

Let $W_2$ be the commutative ring $\F_2$-scheme of
Witt vectors of length $2$.
As an $\F_2$-scheme, it is 
the affine plane 
$\A^2_{\F_2} = \A^1_{\F_2} \times \A^1_{\F_2}$. The ring structure is given by 
\begin{align*}
 (a,b) + (c,d) &= (a+c,\, b+d+ac), \\
 (a,b) \cdot (c,d) &= (ac,\, bc^2+a^2 d).
\end{align*}
Furthermore, we have $ -(a,b) = (a, b+a^2)$. Note that the Frobenius morphism $(a,b) \mapsto (a^2,b^2)$ is a ring homomorphism.

For an $\F_2$-algebra $R$, the additive and multiplicative structures described above endow $W_2(R)$ with the structure of a ring whose multiplicative identity is $(1, 0)$.
An element $(a, b) \in W_2(R)$ is a unit if and only if $a$ is a unit in $R$. In fact,
if $a \in R^{\times}$, then the inverse of $(a, b)$ is given by $(a^{-1}, a^{-4} b)$.
\begin{example}\label{wex}
For $R = \F_2$, the ring $W_2(\F_2)$ is isomorphic to $ \Z/4\Z$.
More generally, for a power $p$ of $2$, 
we have an isomorphism $W_2(\F_p) \simeq \mathcal{O}_K/4\mathcal{O}_K$,
where $\mathcal{O}_K$ is the ring of integers of an unramified extension $K/\Q_2$ of degree $[\F_p : \F_2]$. 
\end{example}
\begin{lemma}\label{free}
The $\Z/4\Z$-module $W_2(\F_p)$ is free. 
\end{lemma}
\begin{proof}
By Example~\ref{wex}, we have an isomorphism 
\[
W_2(\F_p)\simeq  \mathcal{O}_K/4\mathcal{O}_K. 
\]
The assertion follows since $\mathcal{O}_K$ is a finite free $\Z_2$-algebra. 
\end{proof}

Recall that for positive integers $m,n \geq 1$ with $n \mid m$,
the trace map from $\F_{2^m}$ to $\F_{2^n}$ is denoted by 
$\Tr_{2^m/2^n} \colon \F_{2^m} \to \F_{2^n}$.
By an abuse of notation, we use the same symbol $\Tr_{2^m/2^n}$ to denote 
the trace map from $W_2(\F_{2^m})$ to $W_2(\F_{2^n})$. 
It is defined by
\[
\Tr_{2^m/2^n} \colon W_2(\F_{2^m}) \to W_2(\F_{2^n}),\quad (a,b) \mapsto \sum_{i=0}^{\frac{m}{n} - 1}(a^{2^{in}},b^{2^{in}}). 
\]

\begin{lemma}\label{surjTr}
The trace map
$\Tr_{2^m/2^n} \colon W_2(\F_{2^m}) \to W_2(\F_{2^n})$
is surjective.
\end{lemma}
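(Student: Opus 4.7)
The plan is to reduce to the classical surjectivity of the ordinary trace map $\Tr_{\F_{2^m}/\F_{2^n}}\colon \F_{2^m}\to\F_{2^n}$ between finite fields. To do this, I would exploit the short exact sequence of abelian groups
\[ 0 \to \F_{2^m} \xrightarrow{V} W_2(\F_{2^m}) \xrightarrow{\pi} \F_{2^m} \to 0, \]
where $V(x)=(0,x)$ is the Verschiebung and $\pi(a,b)=a$ is the projection onto the first Witt coordinate, together with its analogue for $\F_{2^n}$.

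The first step would be to verify that $\Tr_{2^m/2^n}$ is compatible with $V$ and $\pi$. Since $\pi$ is a ring homomorphism, the identity $\pi\circ\Tr_{2^m/2^n}=\Tr_{\F_{2^m}/\F_{2^n}}\circ\pi$ is immediate from the defining formula $\Tr_{2^m/2^n}(a,b)=\sum_{i}(a^{2^{in}},b^{2^{in}})$. For the compatibility with $V$, one notes that Witt addition of vectors of the form $(0,\ast)$ involves no correction term, so $\Tr_{2^m/2^n}(V(x))=\sum_{i}(0,x^{2^{in}})=(0,\Tr_{\F_{2^m}/\F_{2^n}}(x))=V(\Tr_{\F_{2^m}/\F_{2^n}}(x))$. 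This yields a commutative diagram with exact rows whose outer vertical maps are the classical traces on finite fields, which are surjective.

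The surjectivity of the middle vertical map then follows from a direct diagram chase: given $(c,d)\in W_2(\F_{2^n})$, lift $c$ to some $a\in\F_{2^m}$ with $\Tr_{\F_{2^m}/\F_{2^n}}(a)=c$; compute $\Tr_{2^m/2^n}(a,0)=(c,d')$ for some $d'\in\F_{2^n}$; and observe that the difference $(c,d)-(c,d')$ lies in the image of $V$. Surjectivity of the ordinary trace then furnishes $b\in\F_{2^m}$ with $\Tr_{2^m/2^n}(a,b)=(c,d)$. I do not anticipate any serious obstacle; the only mildly subtle point is the compatibility with Verschiebung, but this reduces to the elementary observation that the carry term in the Witt addition formula vanishes whenever one summand has zero first coordinate.
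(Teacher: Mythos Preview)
Your proof is correct and follows essentially the same approach as the paper. The paper's argument writes out the explicit formula $\Tr_{2^m/2^n}(a,b) = (\Tr_{2^m/2^n}(a),\, \Tr_{2^m/2^n}(b) + P_{m,n}(a))$ for some polynomial $P_{m,n}$ and concludes directly from surjectivity of the classical trace; your exact-sequence formulation with the Verschiebung is just the structural restatement of this same computation.
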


\begin{proof}
We have a commutative diagram of exact sequences
\begin{equation}\label{ses}
\xymatrix{
0 \ar[r] & \F_{2^m} \ar[d]^{\Tr_{2^m/2^n}}\ar[r]^-{i_m} & W_2(\F_{2^m}) \ar[d]^{\Tr_{2^m/2^n}}\ar[r]^-{p_m} & \F_{2^m} \ar[d]^{\Tr_{2^m/2^n}}\ar[r] & 0 \\
0 \ar[r] & \F_{2^n} \ar[r]^-{i_n} & W_2(\F_{2^n}) \ar[r]^-{p_n} & \F_{2^n} \ar[r] & 0, 
}
\end{equation}
where $i_m$ and $i_n$ are given by $x \mapsto (0,x)$, 
and $p_m$ and $p_n$ are given by $(x,y) \mapsto x$. 
By the snake lemma, the assertion follows from the surjectivity of
$\Tr_{2^m/2^n} \colon \F_{2^m} \to \F_{2^n}$.
\end{proof}

In this paper, we use the following notation.
We fix a faithful character
\[ \xi_2 \in W_2(\F_2)^{\vee} \simeq (\Z/4\Z)^{\vee}. \]
We set
$\sqrt{-1} \coloneqq \xi_2(1,0) \in \overline{\Q}_{\ell}^{\times}$,
which is a primitive $4$-th root of unity in $\overline{\Q}_{\ell}$.
For a positive integer $n \ge 1$,
we set
\[ \xi_{2^n} \coloneqq \xi_2 \circ \Tr_{2^n/2} \in W_2(\F_{2^n})^{\vee}. \]
It follows from Lemma~\ref{surjTr} that $\xi_{2^n}$ is a character of order $4$.

For $(a,b) \in W_2(\F_p)$, 
we define a group homomorphism 
\begin{equation}\label{trab}
\Tr_{(a,b)} \colon W_2(\F_p) \to W_2(\F_2), \quad 
(x,y) \mapsto\Tr_{p/2}((a,b)\cdot (x,y)). 
\end{equation}

\begin{lemma}\label{trw}
We have an isomorphism  
\[
\Phi \colon W_2(\F_p) \overset{\sim}{\to} W_2(\F_p)^{\vee}, \quad 
(a,b) \mapsto \xi_2 \circ \Tr_{(a,b)}. 
\]
For a character $\xi \in W_2(\F_p)^{\vee}$, let  $(a_{\xi},b_{\xi}) \in W_2(\F_p)$ be the unique element 
such that $\xi = \xi_2 \circ \Tr_{(a_{\xi},b_{\xi})}$. 
Then $\xi$ has order $4$ if and only if $a_{\xi} \neq 0$. 
\end{lemma}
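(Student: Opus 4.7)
The plan is to verify that $\Phi$ is a well-defined group homomorphism, to establish that it is an isomorphism by a dimension count combined with an injectivity argument, and then to compute the order of $\Phi(a,b)$ directly in the Witt ring. For the homomorphism property, distributivity of multiplication over addition in $W_2(\F_p)$ gives
\[ ((a,b)+(a',b'))\cdot(x,y) \;=\; (a,b)\cdot(x,y) + (a',b')\cdot(x,y); \]
combining this with additivity of $\Tr_{p/2}$ and the character property of $\xi_2$ shows $\Phi((a,b)+(a',b'))=\Phi(a,b)\cdot\Phi(a',b')$ in $W_2(\F_p)^\vee$. Since the two groups $W_2(\F_p)$ and $W_2(\F_p)^\vee$ have the same order $p^2$, it suffices to prove injectivity.

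For injectivity, suppose $\Phi(a,b)=1$. Since $\xi_2$ is a faithful character of $W_2(\F_2)\simeq \Z/4\Z$, its kernel is trivial, so $\Tr_{(a,b)}(x,y)=0$ for every $(x,y)\in W_2(\F_p)$. I would split into two cases. If $a\neq 0$, then $(a,b)$ is a unit in $W_2(\F_p)$, so multiplication by $(a,b)$ is a bijection on $W_2(\F_p)$; composing with the trace, which is surjective onto $W_2(\F_2)$ by Lemma \ref{surjTr}, forces $\Tr_{(a,b)}$ to be surjective and hence nonzero, a contradiction. If $a=0$ and $b\neq 0$, then $(0,b)\cdot(x,y)=(0,bx^2)$; addition of elements of $W_2$ of the form $(0,\ast)$ reduces to componentwise addition, so unwinding the definition of the trace yields $\Tr_{p/2}(0,bx^2)=(0,\Tr_{p/2}(bx^2))$. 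Since $x\mapsto bx^2$ is a bijection on $\F_p$ (characteristic $2$, $b$ invertible), $\Tr_{p/2}(bx^2)$ takes a nonzero value for some $x$, and then $\xi_2(0,1)=\xi_2((1,0)+(1,0))=(\sqrt{-1})^2=-1\neq 1$, so $\Phi(0,b)$ is nontrivial. Hence $(a,b)=0$, proving injectivity.

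For the order statement, under $\Phi$ the order of $\xi$ equals the order of $(a_\xi,b_\xi)$ in the additive group $W_2(\F_p)$. A direct computation in the Witt ring gives $2\cdot(a,b)=(a,b)+(a,b)=(0,a^2)$ (using $a+a=0$ and $b+b=0$ in characteristic $2$), and therefore $4\cdot(a,b)=0$. Thus $(a,b)$ has order dividing $4$, and it has order exactly $4$ precisely when $a^2\neq 0$, that is, when $a\neq 0$, giving the desired criterion $a_\xi\neq 0$.

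The main obstacle I anticipate is the $a=0$ case of the injectivity argument, which requires both the bijectivity of the squaring map on the perfect field $\F_p$ and the explicit simplification of the trace on elements of $W_2$ with vanishing first component; all other steps reduce to routine manipulation once the ring-theoretic formulas for addition, multiplication, and trace in $W_2$ have been unwound.
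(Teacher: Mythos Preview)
Your proof is correct and follows essentially the same strategy as the paper: reduce the isomorphism claim to injectivity via a cardinality count, and deduce the order criterion from the fact that $\Phi$ preserves orders. The only notable difference is in the injectivity step: the paper tests first on $(0,y)$ to force $a=0$ (using $(a,b)\cdot(0,y)=(0,a^2y)$) and then on $(x,0)$ to force $b=0$, whereas you case-split on whether $a\neq 0$ and, in that case, invoke the unit property together with Lemma~\ref{surjTr}; this buys a slightly more conceptual argument at the cost of an extra lemma, while the paper's route is a bit more direct.
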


\begin{proof}
To prove that $\Phi$ is an isomorphism, it suffices to show that $\Phi$ is injective.
Assume $\Phi(a,b)=1$. Note that 
$(a,b)\cdot (x,y)=(ax, bx^2+a^2y)$ for 
$(x,y) \in W_2(\F_p)$. 
Then we have 
\[
1=\xi_2(\Tr_{p/2}((a,b) \cdot (0,y)))
=\xi_2(0,\Tr_{p/2}(a^2y))
\]
 for any $y \in \F_p$. 
Since $\xi_2|_{\{0\} \times \mathbb{F}_2}$ is 
nontrivial, we obtain $a=0$. 
Hence we have 
\[
1=\xi_2(\Tr_{p/2}((0,b) \cdot (x,0)))
=\xi_2(0,\Tr_{p/2}(bx^2))
\]
for any $x \in \F_p$, thus $b=0$. 

Since $\Phi$ is an isomorphism of abelian groups, the character $\xi$ has order $4$ if and only if the element $(a_\xi,b_\xi) \in W_2(\F_p)$ has order $4$. 
This condition is equivalent to $a_{\xi} \neq 0$. 
Hence the final assertion follows. 
\end{proof}

\subsection{The Heisenberg group $H_S$}

Here we explicitly calculate the Heisenberg group $H_S$ for the binomial
\[ 
S(x) = x^p + a_0 x \in \mathscr{A}. 
\]

By \eqref{-aa}, we have 
\[
  E_S(x) = (x^p + a_0 x)^p + (a_0 x)^p + x
  = x^{p^2} + 2 (a_0 x)^p + x
  = x^{p^2} + x, 
\]
and hence  
\[
  V_S = \Ker E_S = \{x \in \mathbb{F} \mid x^{p^2} + x = 0 \} = \F_{p^2}. 
\]
By \eqref{-a}, we have $f_S(x,y) = x y^p$ and
$\omega_S(x,y) = f_S(x,y) + f_S(y,x) = x^p y + x y^p$. 
Thus, by Definition \ref{Heisenberg}, the Heisenberg group $H_S$
is given by
\[
  H_S = \{ (a,b) \in \F_{p^2} \times \F \mid b^p + b = a S(a)=a^{p+1} + a_0 a^2 \}
\]
and the group law is given by
\[ (a,b) \cdot (a',b')=(a+a',\,b+b'+aa'^p). \] 

Let $\xi\in V_S \setminus \{0\}$. 
For $x,y \in \F_p$,
we have 
$\omega_S(x \xi,y \xi) = (x^p y + x y^p )\xi^{p+1}=
(xy + xy)\xi^{p+1} = 0$.
Hence the subspace 
$\mathbb{F}_p\, \xi \subset V_S=\F_{p^2}$ is a maximal totally 
isotropic subspace with respect to $\omega_S$. 
Let $\pi_S \colon H_S \to V_S,\ (a,b) \mapsto a$. 
We take $\xi=1$ and put
\begin{equation}
\label{maximalabeliansubgroup}
A_S \coloneqq \pi_S^{-1}(\mathbb{F}_p)
  = \{(a,b) \in \F_{p} \times \F \mid b^p + b = (1 + a_0) a^2 \},
\end{equation}
which is a maximal abelian subgroup of $H_S$ (cf.\ Example \ref{example: GVcurves}).
 
\begin{lemma}
\label{maximalabeliansubgroupwitt}
 For $(1,\eta) \in A_S$, 
we have an isomorphism of abelian groups 
 \begin{equation}\label{aw}
f_{\eta} \colon A_S \overset{\sim}{\to} W_2(\mathbb{F}_p), \quad 
 (a,b) \mapsto (a,\, b+a^2\eta).
 \end{equation}
 \end{lemma}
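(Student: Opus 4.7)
The plan is to verify the three properties defining an isomorphism of abelian groups: well-definedness of $f_\eta$ as a map into $W_2(\mathbb{F}_p)$, the homomorphism property, and bijectivity. All three follow by direct computation, and the crucial feature of characteristic $2$ is used repeatedly.

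\textbf{Step 1 (well-definedness).} I first check that $f_\eta(a,b) = (a,\, b+a^2\eta)$ actually lies in $W_2(\mathbb{F}_p) = \mathbb{F}_p \times \mathbb{F}_p$. The first coordinate is in $\mathbb{F}_p$ by the definition of $A$ in \eqref{maximalabeliansubgroup}. For the second coordinate, I will compute $(b+a^2\eta)^p + (b+a^2\eta)$. Using $a \in \mathbb{F}_p$ (so that $a^{2p}=a^2$), the defining equation $b^p + b = (1+a_0)a^2$, and the relation $\eta^p + \eta = 1+a_0$ coming from $(1,\eta) \in A$, this expression becomes $(1+a_0)a^2 + a^2(1+a_0) = 0$ since we are in characteristic $2$. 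Hence $b + a^2\eta \in \mathbb{F}_p$.

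\textbf{Step 2 (homomorphism).} Next I will compare the two group laws. Because $a,a' \in \mathbb{F}_p$ satisfy $a'^p = a'$, the multiplication \eqref{gpl} in $A$ simplifies to $(a,b)\cdot (a',b') = (a+a',\, b+b'+aa')$, which matches the additive law of $W_2(\mathbb{F}_p)$ on the first two slots. Applying $f_\eta$ then gives
\[
f_\eta((a,b)\cdot(a',b')) = (a+a',\, b+b'+aa'+(a+a')^2\eta),
\]
whereas
\[
f_\eta(a,b) + f_\eta(a',b') = (a+a',\, b+a^2\eta + b'+a'^2\eta + aa').
\]
Since $(a+a')^2 = a^2 + a'^2$ in characteristic $2$, the two expressions agree.

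\textbf{Step 3 (bijectivity).} Finally I will exhibit an explicit two-sided inverse, namely $g_\eta \colon W_2(\mathbb{F}_p) \to A$ given by $g_\eta(x,y) = (x,\, y+x^2\eta)$. The same computation as in Step 1 (with the roles interchanged) shows $g_\eta(x,y) \in A$. Then $f_\eta \circ g_\eta$ and $g_\eta \circ f_\eta$ are both the identity because $2 \cdot x^2\eta = 0$ in characteristic $2$.

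I do not anticipate a genuine obstacle: every step is a short algebraic manipulation, and the only subtlety is keeping track of how the characteristic-$2$ identities $a^p = a$ (for $a \in \mathbb{F}_p$) and $(a+a')^2 = a^2 + a'^2$ make the Witt-vector carry terms match the cocycle $f_S(x,y) = xy^p$ appearing in the group law of $H_S$.
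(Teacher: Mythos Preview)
Your proof is correct and follows essentially the same approach as the paper's: verify well-definedness, the homomorphism property, and bijectivity by direct computation in characteristic $2$. The only cosmetic difference is that the paper obtains bijectivity from injectivity together with the cardinality count $|A|=|W_2(\mathbb{F}_p)|=p^2$, whereas you write down the explicit inverse $g_\eta$; both are equally valid.
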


 \begin{proof}
 Let $(a,b) \in A_S$. 
From $b^p+b=a S(a)=a^2 S(1)=(a^2 \eta)^p+a^2 \eta$, it follows that the map $f_{\eta}$ is well-defined. 
 Since $f_{\eta}$ is injective and 
 $|A_S|=|W_2(\F_p)|=p^2$, this map is bijective. The map $f_{\eta}$ is a group homomorphism, since 
\begin{align*}
f_{\eta}((a, b) \cdot (a',b'))&=
f_{\eta}(a+a',b+b'+aa'^p)\\
&=(a+a', b+b'+aa'+(a+a')^2 \eta)\\
&=
(a,b+a^2 \eta)+(a',b'+a'^2 \eta)=f_{\eta}(a,b)
+f_{\eta}(a',b'). 
\end{align*}
\end{proof}

\subsection{The Lang torsor over $W_2$}
\label{LangTorsor}

Since $W_2$ is a smooth group scheme over $\F_2$,
the Lang torsor can be constructed via the Frobenius morphism.

\begin{definition}
The morphism over $\F_p$, 
\[ L_p \colon W_{2,\F_p} \to W_{2,\F_p}, \quad 
(x,y) \mapsto (x^p,y^p)- (x,y),  
\]
is called the \emph{Lang torsor} over $W_{2,\F_p}$. 
This is a finite \'etale Galois covering whose Galois group is $W_2(\mathbb{F}_p)$.
\end{definition}

Explicitly, we have  
\begin{equation}
\label{coo-1}
  (x^p,y^p) - (x,y) = (x^p,y^p) + (x,y+x^2)
  = (x^p+x,\, y^p+y+x^{p+1}+x^2). 
\end{equation}

For $\alpha \in \mathbb{F}_q$,
we have a closed embedding
\[ \A^1_{\mathbb{F}_q} \hookrightarrow W_{2,\F_q}, \quad s \mapsto (1,\a) \cdot (s,0)=(s,\a s^2). \]

Let $T_{\a} \subset W_{2,\F_q}$ denote the closed subscheme defined by this embedding. 
The inverse image
$L_p^{-1}(T_{\a}) \subset W_{2,\F_q}$ is given by
\begin{equation}\label{coo}
  L_p^{-1}(T_{\a}) =
  \big\{ (x,y) \in W_{2,\F_q} \, \mid  
       y^p + y + x^{p+1} + x^2 = \alpha (x^p + x)^2 \big\}.
\end{equation}
The group $W_2(\mathbb{F}_p)$ acts on
the closed subscheme $L_p^{-1}(T_{\a}) \subset W_{2,\F_q}$ naturally. 

The following lemmas show that 
$L_p^{-1}(T_{\a})$ is isomorphic to the curve $C_S$
for an appropriate choice of $\alpha \in \F_q$.

\begin{lemma}
\label{lemmaalpha}
Assume $A_S \subset \F_q^2$.
Then there exists $\alpha \in \F_q$ satisfying
$\a^{p^{-1}}+\a = a_0+1$.
\end{lemma}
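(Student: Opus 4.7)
The plan is to read off $\alpha$ directly from the existence of an appropriate lift of $1 \in \F_p$ to an element of $A$ with coordinates in $\F_q$. The key observation is that the defining equation \eqref{maximalabeliansubgroup} of $A$, specialized at $a = 1$, takes the form $b^p + b = 1 + a_0$. The hypothesis $A \subset \F_q^2$ upgrades the solvability of this Artin--Schreier equation from the algebraic closure $\F$ to the subfield $\F_q$: it guarantees the existence of at least one $b \in \F_q$ such that $(1,b) \in A$, hence $b^p + b = 1 + a_0$.

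Given such a $b \in \F_q$, the natural candidate is $\alpha := b^p$, which lies in $\F_q$ since $\F_q$ is perfect. A short verification then yields
\[
  \alpha^{p^{-1}} + \alpha \;=\; b + b^p \;=\; 1 + a_0,
\]
and in characteristic $2$ this rearranges to $\alpha^{p^{-1}} + \alpha + 1 = a_0$, as required.

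There is essentially no obstacle here: the content of the lemma is precisely that $x^p + x = 1 + a_0$ has a solution in $\F_q$, and this is exactly what the hypothesis $A \subset \F_q^2$ supplies at $a = 1$. Note that the hypothesis is used in an essential way: without it, the equation $b^p + b = 1 + a_0$ might only be solvable in a proper extension of $\F_q$, in which case the candidate $\alpha = b^p$ would leave $\F_q$.
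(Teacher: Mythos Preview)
Your proof is correct and essentially identical to the paper's own argument: both take an element $(1,b)\in A$ (the paper writes $(1,\eta)$), use $A\subset\F_q^2$ to conclude $b\in\F_q$, and set $\alpha=b^p$, then verify $\alpha^{p^{-1}}+\alpha=b+b^p=1+a_0$.
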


\begin{proof}
By $A_S \subset \F_q^2$ and \eqref{maximalabeliansubgroup}, 
there exists 
$\eta\in\F_q$ such that $\eta^p + \eta =a_0+1$.  
Then $\alpha \coloneqq \eta^p$ satisfies the desired condition.
\end{proof}

\begin{lemma}\label{dc}
Let $\alpha\in \F_q$ satisfy $\a^{p^{-1}}+\a=a_0+1$. 
Then the map 
\[
C_S\to W_{2,\F_q}, \quad  (x,z) \mapsto 
\bigl(x, z+\a^{p^{-1}} x^2\bigr)
\]
induces an isomorphism 
\[
\iota \colon C_S \xrightarrow{\sim} L_p^{-1}(T_{\a}). 
\]
\end{lemma}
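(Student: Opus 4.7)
The plan is to verify directly, by explicit computation, that the stated morphism sends the defining equation of $C_S$ to that of $L_p^{-1}(T_{\alpha})$, and then to exhibit its inverse. The whole statement is essentially a clever change of variables in characteristic $2$, so the main work is bookkeeping rather than conceptual.

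First I would substitute $y = z + \alpha^{p^{-1}}x^2$ into the defining equation of $L_p^{-1}(T_{\alpha})$ recorded in \eqref{coo}, namely
\[
  y^p + y + x^{p+1} + x^2 = \alpha (x^p + x)^2.
\]
Raising the substitution to the $p$-th power and using that we are in characteristic $2$ gives $y^p = z^p + \alpha x^{2p}$, so that
\[
  y^p + y = z^p + z + \alpha x^{2p} + \alpha^{p^{-1}} x^2.
\]
On the right-hand side, expanding $\alpha(x^p+x)^2 = \alpha x^{2p} + \alpha x^2$ in characteristic $2$ cancels the $\alpha x^{2p}$ term with the one on the left. What remains, after moving everything except $z^p + z$ to the right, is
\[
  z^p + z = x^{p+1} + (\alpha^{p^{-1}} + \alpha + 1) x^2.
\]
By the hypothesis $\alpha^{p^{-1}} + \alpha + 1 = a_0$, this is exactly the defining equation $z^p + z = x S(x) = x^{p+1} + a_0 x^2$ of $C_S$. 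Hence the morphism is well-defined and its image lies in $L_p^{-1}(T_{\alpha})$, and the same computation read in reverse shows that the preimage of $L_p^{-1}(T_{\alpha})$ is precisely $C_S$.

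To upgrade this to an isomorphism I would exhibit the inverse explicitly: the morphism
\[
L_p^{-1}(T_{\alpha}) \to C_S;\ (x,y) \mapsto (x,\, y + \alpha^{p^{-1}} x^2).
\]
Since we are in characteristic $2$, we have $\alpha^{p^{-1}} x^2 + \alpha^{p^{-1}} x^2 = 0$, so the two morphisms are mutually inverse as morphisms of $\F_q$-schemes. This gives the required isomorphism $\iota$. There is no serious obstacle; the only point requiring care is to track the characteristic-$2$ cancellations and to invoke the relation $\alpha^{p^{-1}} + \alpha + 1 = a_0$ at precisely the right step, which is ensured by the choice of $\alpha$ in Lemma \ref{lemmaalpha}.
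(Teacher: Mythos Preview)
Your proof is correct and follows essentially the same approach as the paper: both substitute $y = z + \alpha^{p^{-1}} x^2$ into the defining equation of $L_p^{-1}(T_{\alpha})$, use the relation $\alpha^{p^{-1}} + \alpha + 1 = a_0$ to recover the equation of $C_S$, and then exhibit the inverse map $(x,y) \mapsto (x, y + \alpha^{p^{-1}} x^2)$.
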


\begin{proof}
The closed subscheme $L_p^{-1}(T_\a)$ is defined by the equation 
\[
y^p+y+x^{p+1}+x^2=\a x^{2p}+\a x^2. 
\]
Let $y=z+\a^{p^{-1}} x^2$. Since $\a^{p^{-1}}+\a= a_0+1$, we have 
\[
z^p+z=y^p+y+\a x^{2p}+\a^{p^{-1}}
x^2=x^{p+1}+a_0 x^2. 
\]
This shows that the map $C_S \to W_{2,\F_q}$ factors through $C_S \to L_p^{-1}(T_{\a})$. The inverse map is given by $L_p^{-1}(T_{\a}) \to C_S,\
(x,y) \mapsto (x,y+\a^{p^{-1}} x^2)$. 
\end{proof}

\subsection{Identification of $\mathcal{Q}_\xi$}

Recall that $S(x) = x^p + a_0 x \in \mathscr{A}$
and $A_S=\pi_S^{-1}(\F_p)$. We assume $A_S \subset \F_q^2$. 
Since $\pi_S(A_S)=\F_p$, we may take $F_A(x)$ in Lemma~\ref{F_A} to be $x^p+x$. Then the map~\eqref{phic} coincides with
\begin{equation}\label{pilf}
\nu_S \colon C_S \to \A_{\F_q}^1, \quad (x,y) \mapsto x^p+x,
\end{equation}
which is an $A_S$-covering.
By Lemma~\ref{lemmaalpha}, there exists $\alpha \in \F_q$ satisfying
$\alpha^{p^{-1}} + \alpha=a_0 + 1$.
Note that $(1,\a^{p^{-1}}) \in A_S$. 
By Lemma~\ref{maximalabeliansubgroupwitt},
we identify $A_S$ with $W_2(\mathbb{F}_p)$ via the isomorphism
\begin{equation}\label{aaw}
f_{\alpha^{p^{-1}}} \colon A_S \xrightarrow{\sim} W_2(\mathbb{F}_p), \quad 
 (a,b) \mapsto \bigl(a,\, b + a^2 \alpha^{p^{-1}}\bigr). 
 \end{equation}

\begin{definition}\label{sheafW}
Let $\xi \in W_2(\F_p)^{\vee}$
be a  character, which is regarded as a character of $A_S$ via the isomorphism $f_{\alpha^{p^{-1}}}$.
\begin{enumerate}
\item[{\rm (1)}] 
Let $\mathcal{Q}_{\xi}$ be the smooth $\overline{\Q}_{\ell}$-sheaf on $\A_{\F_q}^1$
defined by the $A_S$-covering $\nu_S$ and $\xi$
as in Definition \ref{smoothsheaf}.

\item[{\rm (2)}] 
Let $\mathcal{L}_{\xi}(z,w)$ denote the smooth $\overline{\Q}_{\ell}$-sheaf on $W_{2,\F_p}$ associated with
$\xi$ and the Lang torsor $L_p \colon W_{2,\F_p} \to W_{2,\F_p}$.
For two polynomials $f(s),g(s) \in \F_q[s]$,
we consider the morphism
\[ f \times g \colon \A_{\F_q}^1 \to W_{2,\F_q}, \quad 
s \mapsto (f(s),g(s)). \] 
Then we set 
$\mathcal{L}_{\xi}(f(s),g(s)) \coloneqq (f \times g)^{\ast} \mathcal{L}_{\xi}(z,w)$,
which is a smooth $\overline{\Q}_{\ell}$-sheaf on $\A_{\F_q}^1$.

\end{enumerate}
\end{definition}

\begin{lemma}\label{QL}
  For a character $\xi\in W_2(\F_p)^\vee$,  we have  an isomorphism 
\[\mathcal{Q}_{\xi} \simeq 
\mathcal{L}_{\xi}(s,\a s^2).\] 
\end{lemma}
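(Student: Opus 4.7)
The plan is to exhibit an equivariant isomorphism between the $A$-torsor $\nu_S\colon C_S\to\mathbb{A}^1_{\F_q}$ and the pullback of the Lang torsor $L_p$ along the embedding $T_\alpha\hookrightarrow W_{2,\F_q};\ s\mapsto(s,\alpha s^2)$, and then to invoke the functoriality of the sheaf-from-torsor construction.

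First I would use Lemma \ref{dc} to replace $C_S$ by $L_p^{-1}(T_\alpha)$ via $\iota\colon(x,z)\mapsto(x,z+\alpha^{p^{-1}}x^2)$. Under this identification, the quotient map $\nu_S$ becomes the projection $L_p^{-1}(T_\alpha)\to T_\alpha\xrightarrow{\sim}\mathbb{A}^1_{\F_q}$ that factors through $L_p$: indeed, $L_p(x,z+\alpha^{p^{-1}}x^2)$ has first coordinate $x^p+x$, which matches $\nu_S$ after the canonical parametrization $T_\alpha\simeq\mathbb{A}^1_{\F_q}$. Since the pullback diagram
\[
\begin{CD}
L_p^{-1}(T_\alpha) @>>> W_{2,\F_q} \\
@VVV @VV{L_p}V \\
T_\alpha @>>> W_{2,\F_q}
\end{CD}
\]
is Cartesian, the left vertical map is a $W_2(\F_p)$-torsor, obtained by pullback of $L_p$ along $T_\alpha\hookrightarrow W_{2,\F_q}$.

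The main computational step is to verify that $\iota$ intertwines the $A$-action on $C_S$ with the natural $W_2(\F_p)$-translation action on $L_p^{-1}(T_\alpha)$ under the identification $f_{\alpha^{p^{-1}}}\colon A\xrightarrow{\sim}W_2(\F_p)$. Using the group law $(x,z)\cdot(a,b)=(x+a,z+b+xa^p)$ on $C_S$ together with $a^p=a$ for $a\in\F_p$, a direct expansion shows that, in the new coordinates $(X,Y)=(x,z+\alpha^{p^{-1}}x^2)$, the action becomes $(X,Y)\mapsto(X+a,\,Y+(b+a^2\alpha^{p^{-1}})+Xa)$, which is precisely Witt addition by $(a,b+a^2\alpha^{p^{-1}})=f_{\alpha^{p^{-1}}}(a,b)$. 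This is the crux of the argument, but it is a routine verification.

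Finally I would conclude by functoriality: the sheaf $\mathcal{Q}_\xi$ was defined from the $A$-torsor $\nu_S$ and $\xi\in A^\vee\simeq W_2(\F_p)^\vee$, and by the preceding paragraph this torsor is the pullback of $L_p$ along $s\times\alpha s^2\colon\mathbb{A}^1_{\F_q}\to W_{2,\F_q}$. Since passing to associated smooth $\overline{\Q}_\ell$-sheaves commutes with pullback, we obtain
\[
\mathcal{Q}_\xi\simeq (s\times\alpha s^2)^{\ast}\mathcal{L}_\xi(z,w)=\mathcal{L}_\xi(s,\alpha s^2),
\]
as required. The only subtlety is bookkeeping the identification $A\simeq W_2(\F_p)$ consistently on both sides so that ``the same $\xi$'' is used; the equivariance check above is precisely what guarantees this compatibility.
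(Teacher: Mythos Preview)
Your proposal is correct and follows essentially the same approach as the paper: use the isomorphism $\iota$ of Lemma~\ref{dc} to identify $C_S$ with $L_p^{-1}(T_\alpha)$, verify by direct computation that $\iota$ intertwines the $A$-action with the $W_2(\F_p)$-translation action via $f_{\alpha^{p^{-1}}}$, and conclude by functoriality of the sheaf-from-torsor construction. The equivariance check you sketch is exactly the one carried out in the paper's proof.
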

\begin{proof}
    We consider the following commutative diagram 
\[
\xymatrix{
C_S \ar[dr]_-{\nu_S}\ar[r]^-{\simeq}_-{\iota} & L_p^{-1}(T_{\a}) \ar[d]\ar[rr] &  & W_{2,\F_q} \ar[d]^-{L_p}\\
& \A_{\F_q}^1 \ar[rr]^-{s \mapsto (s,\a s^2)} & & 
W_{2,\F_q}, 
}
\]
where $\iota$ is given by 
$(x,z) \mapsto (x,z+\alpha^{p^{-1}} x^2)$ as 
in Lemma~\ref{dc}. 
To prove the assertion, it suffices to show that $\iota$ is equivariant with respect to $f_{\alpha^{p^{-1}}}$ in \eqref{aaw}. 
We write $\circledast$ and 
$\ast$ for the action of $A_S$ on $C_S$ and 
the action of $W_2(\F_p)$ on $L_p^{-1}(T_{\a})$, respectively. 
Then for $(x,z) \in C_S$ and $(a,b) \in A_S$, 
we have  
\begin{align*}
\iota((x,z) \circledast (a,b))&=
\iota(x+a,z+b+ax) \\
&=
(x+a,z+b+ax+\alpha^{p^{-1}}(x+a)^2) \\
&=(x,z+\alpha^{p^{-1}}x^2) \ast 
(a,b+\alpha^{p^{-1}} a^2) \\
&=\iota(x,z) \ast f_{\alpha^{p^{-1}}}(a,b). 
\end{align*}
Thus we obtain an isomorphism $\mathcal{Q}_{\xi} \simeq 
\mathcal{L}_{\xi}(s,\alpha s^2)$. 
\end{proof}

For a positive integer $n\geq1$, we set 
\begin{equation}\label{defGr}
    G_{\xi_{2^n}} \coloneqq -\sum_{x\in\F_{2^n}}\xi_{2^n}(x,0). 
\end{equation}
\begin{lemma}\label{HDGr}
    We have  
    \[
    G_{\xi_{2^n}}=(-1-\sqrt{-1})^n. \]
\end{lemma}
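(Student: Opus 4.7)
The plan is to reduce the identity to the case $n=1$ by realizing $G_{\xi_{2^n}}$ as $\lambda^n$ for a single Frobenius eigenvalue $\lambda$ on a one-dimensional cohomology group. The vehicle is the Grothendieck--Lefschetz trace formula applied to the pullback of $\mathcal{L}_{\xi_2}(z,w)$ along the closed immersion $i\colon\A^1_{\F_2}\to W_{2,\F_2}$, $s\mapsto(s,0)$.

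First I would specialize the setting of Subsection 3.4 to $a_0=1$ and $\alpha=0$, a choice that satisfies $\alpha^{p^{-1}}+\alpha+1=a_0$. In this case the maximal abelian subgroup $A=\pi^{-1}(\F_2)$ equals $\F_2\times\F_2\subset\F_{2^n}^2$, and the isomorphism $f_0\colon A\overset{\sim}{\to}W_2(\F_2)$ of Lemma \ref{maximalabeliansubgroupwitt} is the identity. Since $\xi_2$ is faithful on $W_2(\F_2)\simeq\Z/4\Z$, its restriction to $Z(H_S)=\{0\}\times\F_2$ is non-trivial, so Lemma \ref{rkQ} applies and delivers $H^i_c(\A^1_\F,\mathcal{Q}_{\xi_2})=0$ for $i\neq 1$ together with $\dim H^1_c(\A^1_\F,\mathcal{Q}_{\xi_2})=1$. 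Meanwhile, Lemma \ref{QL} identifies
\[
\mathcal{Q}_{\xi_2}\simeq\mathcal{L}_{\xi_2}(s,0)=i^*\mathcal{L}_{\xi_2}(z,w).
\]

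Next I would invoke the standard Lang-torsor pointwise trace identity, which reads, for every $\bar s\in\F_{2^n}$,
\[
\Tr\bigl(\mathrm{Fr}_{2^n};\,(\mathcal{L}_{\xi_2}(s,0))_{\bar s}\bigr)=\xi_2\bigl(\Tr_{2^n/2}(\bar s,0)\bigr)=\xi_{2^n}(\bar s,0).
\]
Writing $\lambda$ for the unique eigenvalue of $\mathrm{Fr}_2$ on $H^1_c(\A^1_\F,\mathcal{Q}_{\xi_2})$ and applying Grothendieck--Lefschetz together with the vanishing above, one obtains
\[
\sum_{\bar s\in\F_{2^n}}\xi_{2^n}(\bar s,0)=-\lambda^n,\qquad\text{so that}\qquad G_{\xi_{2^n}}=\lambda^n
\]
for every $n\geq 1$. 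Evaluating at $n=1$ and computing $G_{\xi_2}=-\bigl(\xi_2(0,0)+\xi_2(1,0)\bigr)=-1-\sqrt{-1}$ directly then fixes $\lambda=-1-\sqrt{-1}$, and the identity $G_{\xi_{2^n}}=(-1-\sqrt{-1})^n$ follows.

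Essentially no obstacle remains once Lemmas \ref{QL} and \ref{rkQ} are granted, since the whole argument is a Hasse--Davenport-style reduction to $n=1$. The only technical point requiring care is the pointwise Lang-torsor trace identity for $W_2$, which I would justify either by appealing to \cite[Proposition~10.1.23]{Fu} or by unwinding the defining relation of the Lang torsor $L_p$ directly.
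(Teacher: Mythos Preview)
Your proof is correct and follows essentially the same route as the paper's: both specialize to $p=2$, $S(x)=x^2+x$, $\alpha=0$, invoke Lemmas \ref{QL} and \ref{rkQ} to identify $\mathcal{L}_{\xi_2}(s,0)\simeq\mathcal{Q}_{\xi_2}$ with one-dimensional $H^1_c$ and vanishing $H^i_c$ for $i\neq1$, and then read off $G_{\xi_{2^n}}=\lambda^n$ from Grothendieck--Lefschetz. The paper's write-up is terser and leaves the $n=1$ evaluation $G_{\xi_2}=-(1+\sqrt{-1})$ implicit, whereas you spell it out; otherwise the arguments coincide.
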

\begin{proof}
By the Grothendieck--Lefschetz trace formula, we have  
\[
G_{\xi_{2^n}}=\sum_i(-1)^i\Tr({\rm Fr}_{2^n} \mid H^i_c(\A^1_{\F},\mathcal{L}_{\xi_2}(s,0))). 
\]
Therefore, it suffices to show that $H^i_c$ vanishes for $i\neq1$ and has dimension $1$ for $i=1$ since $\mathrm{Fr}_{2^n}=\mathrm{Fr}_2^n$ and $G_{\xi_2}=-1-\sqrt{-1}$. 
To this end, we consider the case $S(x)=x^p+x$, 
for which we may take $\alpha=0$.  Then the assertion follows from Lemmas~\ref{rkQ} and~\ref{QL}. 
\end{proof}

\subsection{Calculation of Frobenius eigenvalues}

In this subsection, we compute $\tau_{S,\xi,q}$ by Lemma~\ref{QL}. 
Let $\xi \in W_2(\F_p)^{\vee}$ be a faithful character.
By Lemma~\ref{trw},
we can write
$\xi=\xi_2 \circ \Tr_{(a_{\xi},b_{\xi})}$
for a unique element $(a_{\xi},b_{\xi}) \in W_2(\F_p)^{\times}$.

For $u,v \in \F_{2^n}$, we set
\begin{equation}\label{cplf}
c_{\xi}(u,v) \coloneqq (u + (b_{\xi}/a_{\xi}^2))^{1/2} + a_{\xi} v \in \F_{2^n}. 
\end{equation}
We also define $c_\xi(u)\coloneqq c_\xi(u,0)$.

\begin{lemma}\label{plf}
Let $u,v \in\F_q$. 
\begin{itemize}
\item[{\rm (1)}] 
We have an isomorphism
\[ \mathcal{L}_{\xi}(s,u s^2+v s) \simeq 
\mathcal{L}_{\xi_2}(a_\xi s+c_{\xi}(u,v),0) \otimes \mathcal{L}_{\xi_2^{-1}}(c_{\xi}(u,v),0). \] 

\item[{\rm (2)}]
Let $n$ be a positive multiple of $[\F_q:\F_2]$. 
We have
\begin{align*}
\Tr(\mathrm{Fr}_{2^n} \mid H_{\rm c}^1(\A^1_{\F},\mathcal{L}_{\xi}(s,u s^2+v s)))=
\xi_{2^n}^{-1}(c_{\xi}(u,v),0) \cdot G_{\xi_{2^n}}. 
\end{align*}
\end{itemize}
\end{lemma}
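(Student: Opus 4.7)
My plan is to prove the sheaf isomorphism in (1) by a Frobenius-trace comparison and then deduce (2) via the Grothendieck--Lefschetz trace formula applied to the simpler right-hand side. Since a smooth rank-$1$ $\overline{\Q}_\ell$-sheaf on the connected $\F_q$-scheme $\A^1_{\F_q}$ is determined up to isomorphism by its trace function at all $\F_{q^n}$-points, statement (1) reduces to a pointwise trace identity; part (2) then follows from (1) by the projection formula together with a linear change of variable.

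For (1), I compute both traces at a fixed $s_0 \in \F_{2^n}$ (with $[\F_q:\F_2]\mid n$). Using the identification $\xi = \xi_2 \circ \Tr_{(a_\xi, b_\xi)}$ from Lemma \ref{trw}, the left-hand trace is
\[
 \xi_{2^n}\bigl(a_\xi s_0,\, w s_0^2 + a_\xi^2 v s_0\bigr), \qquad w \coloneqq b_\xi + a_\xi^2 u.
\]
For the right-hand side, using $-(a,b)=(a,b+a^2)$ to subtract in $W_2$, the product of the two Frobenius traces is
\[
 \xi_{2^n}\bigl((a_\xi s_0 + c_\xi(u,v),0) - (c_\xi(u,v),0)\bigr) = \xi_{2^n}\bigl(a_\xi s_0,\, a_\xi s_0\, c_\xi(u,v)\bigr),
\]
which, via the identity $a_\xi c_\xi(u,v)=w^{1/2}+a_\xi^2 v$ immediate from the definition of $c_\xi(u,v)$, equals $\xi_{2^n}(a_\xi s_0,\, w^{1/2}s_0 + a_\xi^2 v s_0)$. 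The quotient of the two traces is therefore $\xi_{2^n}(0,\, w s_0^2 + w^{1/2} s_0) = (-1)^{\Tr_{2^n/2}(w s_0^2 + w^{1/2} s_0)}$, which is trivial because $\Tr_{2^n/2}(w s_0^2) = \Tr_{2^n/2}((w^{1/2} s_0)^2) = \Tr_{2^n/2}(w^{1/2} s_0)$ by Frobenius-invariance of the trace map.

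For (2), since $c_\xi(u,v) \in \F_q$ is a constant, $\mathcal{L}_{\xi_2^{-1}}(c_\xi(u,v),0)$ is geometrically constant with $\mathrm{Fr}_{2^n}$ acting on its stalk by $\xi_{2^n}^{-1}(c_\xi(u,v),0)$. Combining (1) with the projection formula $H^i_c(X, \mathcal{F} \otimes p^{\ast} V) \simeq H^i_c(X, \mathcal{F}) \otimes V$ for geometrically constant $p^{\ast} V$ yields
\[
 H^1_c(\A^1_\F,\, \mathcal{L}_\xi(s, u s^2 + v s)) \simeq H^1_c(\A^1_\F,\, \mathcal{L}_{\xi_2}(a_\xi s + c_\xi(u,v),0)) \otimes \mathcal{L}_{\xi_2^{-1}}(c_\xi(u,v),0)
\]
as $\mathrm{Fr}_{2^n}$-modules. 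The change of variable $s \mapsto a_\xi s + c_\xi(u,v)$, which is an $\F_q$-automorphism of $\A^1$, identifies the first factor with $H^1_c(\A^1_\F, \mathcal{L}_{\xi_2}(s,0))$, whose $\mathrm{Fr}_{2^n}$-eigenvalue is $G_{\xi_{2^n}}$ by the proof of Lemma \ref{HDGr}. Taking the product of the two eigenvalues gives the claim. The main obstacle---and essentially the only nontrivial step---is the $W_2$-bookkeeping in (1), after which (2) is a formal consequence.
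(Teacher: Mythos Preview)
Your proof is correct. Your approach to (1) differs from the paper's in framework: the paper exhibits a chain of explicit sheaf isomorphisms (using $\mathcal{L}_{\xi_2}(f^2,g^2)\simeq\mathcal{L}_{\xi_2}(f,g)$, the Witt identity $(a_\xi s,a_\xi c s)=(a_\xi s+c,0)+(c,c^2)$, and multiplicativity of $\mathcal{L}_{\xi_2}$ in the Witt argument), whereas you verify equality of Frobenius trace functions at all closed points and invoke the standard fact that a rank-$1$ smooth sheaf on a connected normal $\F_q$-scheme is determined by its trace function. The underlying Witt-vector bookkeeping is essentially identical; in particular, your step showing $\xi_{2^n}(0,ws_0^2+w^{1/2}s_0)=1$ is precisely the trace-function shadow of the isomorphism $\mathcal{L}_{\xi_2}(f^2,g^2)\simeq\mathcal{L}_{\xi_2}(f,g)$. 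For (2), the paper simply applies the Grothendieck--Lefschetz trace formula to the right-hand side of (1) and evaluates the resulting character sum by the affine change of variable $t=a_\xi s+c$; your route via the projection formula and a cohomological change of variable is equivalent. Neither approach gains anything substantial over the other---both reduce to the same Witt computation---but the paper's direct sheaf manipulation has the minor advantage of not invoking the Chebotarev-type determination principle.
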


\begin{proof}
(1) For brevity, we denote 
$a_{\xi}$, $b_{\xi}$ and  
$c_{\xi}(u,v)$ by $a$, $b$
and $c$, respectively.
Since $(a,b) \cdot (x,y)=(ax,b x^2+a^2 y)$ in 
$W_2$ and $\xi=\xi_2 \circ \Tr_{(a,b)}$, 
we have 
\begin{gather}\label{sa}
\begin{aligned}
\mathcal{L}_{\xi}(s, u s^2 + v s)
&\simeq \mathcal{L}_{\xi_2 \circ \Tr_{p/2}}(a s, (a^2 u + b) s^2 + a^2 v s) \\
&\simeq \mathcal{L}_{\xi_2}(a s, (a^2 u + b)  s^2 + a^2 v s) \\
& \simeq \mathcal{L}_{\xi_2}(a s, a c s) \\
& \simeq \mathcal{L}_{\xi_2}(
(a s + c, 0) + (c, c^2)) \\
&\simeq \mathcal{L}_{\xi_2}(a s + c, 0)
\otimes \mathcal{L}_{\xi_2^{-1}}(c, 0), 
\end{aligned}
\end{gather}
where the third isomorphism uses 
$\mathcal{L}_{\xi_2}(f^2,g^2)
\simeq \mathcal{L}_{\xi_2}(f,g)$,  
 and the 
last isomorphism uses 
$\mathcal{L}_{\xi_2}((f_1,g_1)+(f_2,g_2))
\simeq \mathcal{L}_{\xi_2}(f_1,g_1)
\otimes  \mathcal{L}_{\xi_2}(f_2,g_2)$ and  
$-(x,0)=(x,x^2)$. 

(2) Since $H_{\rm c}^i(\A_{\F}^1,\mathcal{L}_{\xi}(s,u s^2+v s))=0$ for $i \neq 1$,
the claim follows from (1), $a_{\xi} \neq 0$ and the Grothendieck--Lefschetz trace formula.
\end{proof}

Recall that $S(x) = x^p + a_0 x \in \mathscr{A}$
and $A_S=\pi_S^{-1}(\F_p) \subset \F_q^2$. 
Choose $\alpha \in \F_q$ such that $\alpha^{p^{-1}} + \alpha =a_0+ 1$.
Let $\xi \in W_2(\F_p)^{\vee}$ be a faithful 
character.
Let  $n$ be a positive integer such that 
$\F_q \subset \F_{2^n}$. Let $\tau_{S,\xi,2^n}$ be defined as in \eqref{tau}.

\begin{theorem}\label{hc}
Let the notation and assumptions be as above.  
 Then we have 
 \[
\tau_{S,\xi,2^n} = \xi_{2^n}(c_{\xi}(\a),0)^{-1} \cdot
(-1-\sqrt{-1})^n. 
\] 
\end{theorem}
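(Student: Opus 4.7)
The plan is to assemble the theorem directly from the three preceding results: Lemma \ref{QL}, Lemma \ref{plf} (2), and Lemma \ref{HDGr}. All the genuine geometric and cohomological input has already been packaged into those statements, so the proof reduces to a short chain of identifications together with a specialization of variables.

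First I would rewrite the Frobenius trace using the identification of the sheaf $\mathcal{Q}_\xi$. By definition,
\[
  \tau_{S,\xi,2^n} = \Tr(\mathrm{Fr}_{2^n};\,H^1_{\mathrm c}(\mathbb{A}^1_{\F},\mathcal{Q}_\xi)),
\]
and Lemma \ref{QL} provides an isomorphism $\mathcal{Q}_\xi \simeq \mathcal{L}_\xi(s,\alpha s^2)$. In particular this identifies the $\ell$-adic cohomology groups as Galois modules, so
\[
  \tau_{S,\xi,2^n} = \Tr(\mathrm{Fr}_{2^n};\,H^1_{\mathrm c}(\mathbb{A}^1_{\F},\mathcal{L}_\xi(s,\alpha s^2))).
\]

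Next, I would invoke Lemma \ref{plf} (2) with the specialization $u = \alpha$ and $v = 0$; the hypothesis $\F_q \subset \F_{2^n}$ of Theorem \ref{hc} supplies the divisibility condition needed there. The right-hand side of the formula becomes
\[
  \xi_{2^n}^{-1}(c_\xi(\alpha,0),0)\cdot G_{\xi_{2^n}},
\]
and since $c_\xi(\alpha,0) = c_\xi(\alpha)$ by the convention recorded just below \eqref{cplf}, this already has the desired form up to the Gauss-sum factor. Finally, I would evaluate that factor by Lemma \ref{HDGr}, which gives $G_{\xi_{2^n}} = (-1-\sqrt{-1})^n$. Combining these three steps yields exactly
\[
  \tau_{S,\xi,2^n} = \xi_{2^n}(c_\xi(\alpha),0)^{-1}\cdot(-1-\sqrt{-1})^n.
\]

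There is no serious obstacle here: the geometric content was already absorbed into Lemma \ref{QL} (the identification with the Lang-torsor pullback via the isomorphism $\iota$ of Lemma \ref{dc}), the cohomological content into Lemma \ref{plf} (where the change of variables and the splitting of the Witt-vector character are carried out), and the Gauss-sum evaluation into Lemma \ref{HDGr}. The only point worth double-checking is that the faithfulness of $\xi$ ensures $a_\xi \neq 0$ by Lemma \ref{trw}, so that the quantity $c_\xi(\alpha)$ defined in \eqref{cplf} actually makes sense; this is the sole place where the faithful-character hypothesis of the theorem is used.
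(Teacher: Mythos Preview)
Your proposal is correct and matches the paper's own proof, which simply cites Lemmas \ref{QL}, \ref{HDGr}, and \ref{plf} (2) without further elaboration. Your write-up spells out the chain of identifications and the specialization $u=\alpha$, $v=0$ explicitly, and your closing remark about the faithfulness of $\xi$ guaranteeing $a_\xi\neq 0$ is a useful sanity check that the paper leaves implicit.
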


\begin{proof}
 The claim follows from Lemmas~\ref{QL}, \ref{HDGr}, and~\ref{plf} (2). 
\end{proof}

 \begin{remark}\label{quartic}
 In the following, we explain that the cohomology group 
 \[
 H_{\rm c}^1(\A_{\F}^1,\mathcal{L}_{\xi}(s,u s^2+v s)) \quad \textrm{for $u, v \in 
 \F_{2^n}$}
 \]
 in Lemma~\ref{plf} 
 is regarded as a part of $H^1$ of a supersingular elliptic curve. 
 We simply write $c$ for $c_{\xi}(u,v)$ in \eqref{cplf}.  
By Lemma~\ref{plf} (1), 
 we have isomorphisms
 \begin{align*}
 H_{\rm c}^1(\A_{\F}^1,\mathcal{L}_{\xi}(s,u s^2+v s))
 &\simeq H^1_{\rm c}(\A_{\F}^1,\mathcal{L}_{\xi_2}(s_1+c,0) \otimes \mathcal{L}_{\xi_2^{-1}}(c,0)) \\
 &\simeq  H^1_{\rm c}(\A_{\F}^1,\mathcal{L}_{\xi_2}(s_1,cs_1))
 \simeq  H^1_{\rm c}(\A_{\F}^1,\mathcal{L}_{\xi_2}(s_1,(cs_1)^2)). 
 \end{align*}   
 Here we set $s_1=a_\xi s$. 
 Let $E_c$ denote the supersingular elliptic 
 curve over $\F_{2^n}$ with affine equation
\[ y^2+y=x^3+(c^2+c+1) x^2. \]
 By Proposition~\ref{14} (2) and Lemma~\ref{QL} with $p=2$ and $\a=c^2$, we have an isomorphism
 \begin{align*}
   H^1(E_{c,\F},\overline{\Q}_{\ell}) & \simeq H^1_{\rm c}(\A_{\F}^1,\mathcal{L}_{\xi_2}(s_1,(cs_1)^2)) \oplus H^1_{\rm c}(\A_{\F}^1,\mathcal{L}_{\xi_2^{-1}}(s_1,(cs_1)^2)). 
\end{align*}
From this decomposition together with Lemma~\ref{plf} (2), we obtain 
\[
 L_{E_c/\F_{2^n}}(T)=
 \bigl(1-\xi_{2^n}(c,0)^{-1}\cdot G_{\xi_{2^n}} T\bigr)
 \bigl(1-\xi_{2^n}(c,0)\cdot G_{\xi^{-1}_{2^n}} T\bigr). 
 \]
If $\Tr_{2^n/2}(c)=1$, then $\xi_{2^n}(c,0)$ is a primitive $4$-th root of unity.
Hence $E_c$ is a quartic twist of $E_0$.
 \end{remark}
 
 \begin{remark}\label{relation with KT}
 Let $A_1>A_2>\cdots>A_n\ge1$ $(n\ge2)$ be odd integers. 
In \cite[Section~9.2]{KT}, Katz and Tiep study the local system $\mathcal{W}(A_1,\dots,A_n)$ on $X=\A^n_{\F_q}$, whose coordinates are written as $(s,t_2,\dots,t_n)$,  defined as follows. Consider the $\F_q$-morphisms
\[
X\xleftarrow{{\rm pr}_2}\A^1_{\F_q}\times_{\F_q}X\xrightarrow{f} W_{2,\F_q}, 
\]
where $f$ is defined by 
\[f(x,s,t_2,\dots,t_n)=\biggl(sx,x^{A_1}+\sum_{j=2}^{n}t_jx^{A_j}\biggr).\]
Then the $\overline{\Q}_\ell$-sheaf $\mathcal{W}(A_1,\dots,A_n)$ on $X$ is defined, up to twisting by a rank-one sheaf,  by 
\[
\mathcal{W}(A_1,\dots,A_n)=R^1{\rm pr}_{2!}\mathcal{L}_\xi(f)
\]
for a fixed faithful character $\xi\colon W_2(\F_2)\to \overline{\Q}_{\ell}^\times$. Here we set $\mathcal{L}_{\xi}(f)\coloneqq f^\ast \mathcal{L}_{\xi}(z,w)$ (cf.\ Definition \ref{sheafW} (2)). 

The $\xi$-isotypic part of the cohomology group in Lemma~\ref{plf} (2) can be identified with a fiber of this sheaf in the case where $n=1$ and $A_1=1$, which the authors do not seem to treat (note that $\mathcal{L}_\xi(s,us^2+vs)\cong \mathcal{L}_\xi(s,(v+u^{1/2})s)$). 

Alternatively, suppose that the $A_i$ are of the form $p^{n_i}+1$. In this case, the finite Galois covering $Y\to \A^1_{\F_q}\times_{\F_q}X$ that trivializes $\mathcal{L}_\xi(f)$ is a family of van der Geer--van der Vlugt curves, in the sense that $Y$ is smooth over $X$ and each fiber is isomorphic to the curve $C_R$ for some additive polynomial $R$ (as follows from a direct computation using \eqref{coo-1}). Thus the sheaf $\mathcal{W}(A_1,\dots,A_n)$ can be studied via  a computation of  the cohomology of such curves. 
 \end{remark}
 
\section{Quotients of van der Geer--van der Vlugt curves}
\label{Section:Quotients}
Let $R_1(x)$ and $R_2(x)$ be two $\F_p$-linearized polynomials. 
In Subsection \ref{R1R2}, we consider a specific situation in which there is an induced quotient map $C_{R_1}\to C_{R_2}$ between the van der Geer--van der Vlugt curves. In Subsection \ref{RS}, we specialize to the case where $R_2(x)$ is of the form $x^p+s_0x$, in order to relate general van der Geer--van der Vlugt curves to the curves studied in Section \ref{Section:Geometry}.

\subsection{Maps between van der Geer--van der Vlugt curves}
\label{R1R2}
 In this subsection, we consider the following situation. 
Let $R_1(x), R_2(x), f(x) \in \mathscr{A} \setminus \{0\}$. 
Suppose that $f(x)$ is separable and that there exists a polynomial 
$\Delta(x) \in \F_q[x]$ such that 
$\Delta(0)=0$ and 
\begin{equation}\label{das}
\Delta(x)^p+\Delta(x)=x R_1(x)+f(x) R_2(f(x)).
\end{equation} 
We define the polynomial $\delta(x,y)\in\F_q[x,y]$ by 
\begin{equation}\label{das2}
    \delta(x,y)=\Delta(x+y)+\Delta(x)+\Delta(y)+f_{R_1}(x,y)+
f_{R_2}(f(x),f(y)). 
\end{equation}
We assume $\deg R_i=p^{e_i}$ with $e_i\geq1$. 
\begin{lemma}\label{lmPdel}
Let the notation and assumptions be as above.   
We have $\delta(0,0)=0$ and 
\begin{equation}\label{Pdel}
\delta(x,y)^p+\delta(x,y)=
x^{p^{e_1}} E_{R_1}(y)+
f(x)^{p^{e_2}} E_{R_2}(f(y)). 
\end{equation}  
Moreover, we have 
$\delta(x,y+z)=\delta(x,y)+\delta(x,z)$, $\delta(x+y,z)=\delta(x,z)+\delta(y,z)$, and $\delta(ax,y)=a\delta(x,y)=\delta(x,ay)$ for $a\in \F_p$. 
\end{lemma}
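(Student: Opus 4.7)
The plan is to treat the three claims in sequence, reducing each to the two defining Artin--Schreier identities \eqref{das} (for $\Delta$) and \eqref{a} (for $f_R$), combined with the $\F_p$-linearity of $R_1$, $R_2$, $f$, and the polynomials $E_{R_i}$. The central structural input is that the operator $T\mapsto T^p+T$ is additive and, in characteristic~$2$, coincides with the Artin--Schreier operator $T\mapsto T^p-T$.

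The identity $\delta(0,0)=0$ is immediate from $\Delta(0)=0$, $f_{R_1}(0,0)=0$, and $f_{R_2}(f(0),f(0))=f_{R_2}(0,0)=0$. To prove \eqref{Pdel}, I apply $T^p+T$ to $\delta(x,y)$ and expand. By \eqref{das} applied at $x+y$, $x$, and $y$, the three $\Delta$-contributions combine to
\[ (x+y)R_1(x+y)+xR_1(x)+yR_1(y)+f(x+y)R_2(f(x+y))+f(x)R_2(f(x))+f(y)R_2(f(y)); \]
using that $R_1$ and $f$ are linearized, the identity $(x+y)R(x+y)+xR(x)+yR(y)=xR(y)+yR(x)$ (valid in characteristic~$2$) collapses this to $xR_1(y)+yR_1(x)+f(x)R_2(f(y))+f(y)R_2(f(x))$. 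Meanwhile, \eqref{a} applied to $f_{R_1}(x,y)$ and to $f_{R_2}(f(x),f(y))$ contributes, in its sum, the same cross terms together with $x^{p^{e_1}}E_{R_1}(y)+f(x)^{p^{e_2}}E_{R_2}(f(y))$. The cross terms cancel in pairs in characteristic~$2$, yielding \eqref{Pdel}.

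For the $\F_p$-bilinearity I would use a ``universal cancellation'' argument driven by \eqref{Pdel}. For instance, set $D(x,y,z)\coloneqq\delta(x,y+z)+\delta(x,y)+\delta(x,z)$. Applying $T^p+T$ to $D$ and using \eqref{Pdel} term by term gives $x^{p^{e_1}}(E_{R_1}(y+z)+E_{R_1}(y)+E_{R_1}(z))+f(x)^{p^{e_2}}(E_{R_2}(f(y+z))+E_{R_2}(f(y))+E_{R_2}(f(z)))$, which vanishes by the additivity of $E_{R_1}$, $E_{R_2}$, and $f$. Hence $D\in\F[x,y,z]$ lies in the kernel of $T\mapsto T^p+T$, which forces $D$ to be a constant in $\F_p$; since $D(0,0,0)=0$ the constant is $0$. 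Additivity in the first variable is entirely analogous (using $(x+y)^{p^{e_i}}=x^{p^{e_i}}+y^{p^{e_i}}$ together with $f(x+y)=f(x)+f(y)$), and $\F_p$-scalar compatibility follows from the same scheme via $a^{p^{e_i}}=a$ and $f(ax)=af(x)$ for $a\in\F_p$.

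I do not expect a serious obstacle: the proof is fundamentally a bookkeeping exercise. The only mildly delicate point is verifying that the cross terms produced by \eqref{das} match exactly those produced by \eqref{a}, so that their sum vanishes in characteristic~$2$.
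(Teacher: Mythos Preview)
Your proposal is correct and follows essentially the same route as the paper: the paper also derives \eqref{Pdel} by applying the Artin--Schreier operator to $\delta(x,y)$ and expanding via \eqref{das} and \eqref{a}, then proves the bilinearity by forming the difference $g=\delta(x,y+z)-\delta(x,y)-\delta(x,z)$, noting $g^p-g=0$ from \eqref{Pdel} and additivity of $E_{R_i}$ and $f$, and concluding $g=0$ from $g\in\F_p$ together with the vanishing at the origin.
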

\begin{proof}
  The assertion $\delta(0,0)=0$ is clear 
since $\Delta(0)=0$. 
By using \eqref{a} and \eqref{das}, we compute 
\begin{align*}
\delta(x,y)^p+\delta(x,y)&=
(x+y) R_1(x+y)
+f(x+y)R_2(f(x+y))\\
&\quad +xR_1(x)+f(x) R_2(f(x))+y R_1(y)+f(y) R_2(f(y))
\\
&\quad +x^{p^{e_1}} E_{R_1}(y)+xR_1(y)+yR_1(x) \\
&\quad +f(x)^{p^{e_2}} E_{R_2}(f(y))+f(x) R_2(f(y))+f(y)R_2(f(x))\\
&=x^{p^{e_1}} E_{R_1}(y)+f(x)^{p^{e_2}} E_{R_2}(f(y)).   
\end{align*} 
Thus the equality \eqref{Pdel} follows.  We prove $\delta(x,y+z)=\delta(x,y)+\delta(x,z)$. The remaining equalities follow similarly. Set $g(x,y,z) \coloneqq \delta(x,y+z)-\delta(x,y)-\delta(x,z)$. 
Since \eqref{Pdel} and $f(x), E_{R_i}(x) \in \mathscr{A}\  (i=1,\, 2)$, this satisfies 
\[g(x,y,z)^p-g(x,y,z)=0.\]
Hence, $g(x,y,z)$ is constant. Since $g(0,0,0)=0$ by $\delta(0,0)=0$, we obtain $g(x,y,z)=0$, as desired. 
\end{proof}
Consider the $\F_p$-linear map $f\colon \F\to \F$
defined by $f(x) \in \mathscr{A}$. 
\begin{lemma}\label{Kerf}

 \begin{itemize}
 \item[{\rm (1)}] We have 
\[\delta(x,x)=f_{R_1}(x,x)+f_{R_2}(f(x),f(x)). \]
\item[{\rm (2)}] We have $\Ker f \subset V_{R_1}$.  For $a \in \Ker f$, we have $\delta(x,a)=0$. In particular, we have $f_{R_1}(a,a)=0$. 
\item[{\rm (3)}] 
Consider the subspace $(\Ker f)^{\perp}\subset V_{R_1}$ defined by 
\[
 (\Ker f)^{\perp}\coloneqq \{x \in V_{R_1} \mid 
\omega_{R_1}(x,y)=0\ 
\textrm{for every $y \in \Ker f$}\}.
\]
For $a\in (\Ker f)^{\perp}$, we have $f(a) \in V_{R_2}$ and 
$\delta(x,a) = 0$. In particular, $f_{R_1}(a,a)
=f_{R_2}(f(a), f(a))$. 

\end{itemize}
\end{lemma}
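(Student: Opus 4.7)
The plan is to derive (1) by direct substitution and to prove (2) and (3) by specializing identity \eqref{Pdel} to $y=a$ and analyzing the resulting Artin--Schreier-type relation for $\delta(x,a)\in\F[x]$. The two main ingredients are a non-representability statement for the operator $\wp(g)=g^p+g$ on $\F[x]$ and, for (3), a divisibility-plus-degree argument driven by the separability of $f$.

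Part (1) is immediate from the definition: in characteristic $2$, the terms $\Delta(x+x)=\Delta(0)=0$ and $\Delta(x)+\Delta(x)=0$ both collapse, leaving exactly $f_{R_1}(x,x)+f_{R_2}(f(x),f(x))$.

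For (2), with $a\in\Ker f$ identity \eqref{Pdel} reduces to $\delta(x,a)^p+\delta(x,a)=x^{p^{e_1}}E_{R_1}(a)$, since $f(a)=0$. The key observation is that $cx^{p^k}$, for $c\neq0$ and $k\geq1$, does not lie in $\wp(\F[x])$: given a putative solution $g$, the substitution $g\mapsto g-c^{1/p}x^{p^{k-1}}$ produces a solution with the exponent reduced to $k-1$, and iterating $k$ times reduces to $\wp(h)=c^{1/p^k}x$, which is impossible by a degree argument. Applied with $k=e_1\geq1$, this forces $E_{R_1}(a)=0$, proving $\Ker f\subset V_{R_1}$. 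The equation then reduces to $\delta(x,a)^p+\delta(x,a)=0$, and combined with the direct evaluation $\delta(0,a)=0$ (all constant terms cancel in characteristic $2$) gives $\delta(x,a)\equiv0$; applying (1) at $x=a$ with $f(a)=0$ yields $f_{R_1}(a,a)=0$.

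For (3), $a\in V_{R_1}$ gives $E_{R_1}(a)=0$, so \eqref{Pdel} at $y=a$ becomes $\delta(x,a)^p+\delta(x,a)=f(x)^{p^{e_2}}E_{R_2}(f(a))$. To exploit the orthogonality assumption, I would first derive the characteristic-$2$ symmetrization
\[
\delta(x,y)+\delta(y,x)=\omega_{R_1}(x,y)+\omega_{R_2}(f(x),f(y))
\]
by adding \eqref{Pdel} to its swap and invoking \eqref{omega}. Specializing to $y=a$ and $x=b\in\Ker f$, the right-hand side vanishes (by orthogonality and $f(b)=0$) and $\delta(a,b)=0$ by (2), so $\delta(b,a)=0$ for every $b\in\Ker f$. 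Separability of $f$ then implies $f(x)\mid\delta(x,a)$ in $\F[x]$; writing $\delta=fh$, dividing the equation by $f$ and again evaluating at $x\in\Ker f$ (using $p^{e_2}-1\geq1$) shows $h$ also vanishes on $\Ker f$, and iterating this procedure a total of $p^{e_2}$ steps yields $f^{p^{e_2}}\mid\delta(x,a)$ in $\F[x]$. On the other hand, the original equation forces $p\cdot\deg_x\delta(x,a)=\deg f^{p^{e_2}}$ whenever $E_{R_2}(f(a))\neq0$, contradicting the divisibility. Hence $f(a)\in V_{R_2}$; the equation then reduces to $\delta(x,a)^p+\delta(x,a)=0$, giving $\delta(x,a)\equiv0$ as in (2), and applying (1) at $x=a$ finishes the proof. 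The main technical subtlety is the iterated divisibility in (3); it works out cleanly only because separability of $f$ and the bound $p^{e_2}-1\geq1$ feed the induction all the way down to the scalar $E_{R_2}(f(a))\in\F$, at which point the degree comparison closes the argument.
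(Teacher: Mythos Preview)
Your proof is correct. For (1) and for the overall skeleton of (2) and (3)---specializing \eqref{Pdel} to $y=a$, deriving the symmetrization $\delta(x,y)+\delta(y,x)=\omega_{R_1}(x,y)+\omega_{R_2}(f(x),f(y))$, and using $\delta(0,a)=0$---you follow the same path as the paper. The differences lie in how the vanishing of $E_{R_i}$ is forced.

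In (2) the paper argues in one stroke by comparing the highest and lowest exponents appearing in $\delta(x,a)^p+\delta(x,a)$; your iterated reduction $g\mapsto g+c^{1/p}x^{p^{k-1}}$ is a valid alternative. The more substantial divergence is in (3). The paper uses the $\F_p$-linearity of $x\mapsto\delta(x,a)$ established in Lemma~\ref{lmPdel}, together with separability of $f$, to factor $\delta(x,a)=g(f(x))$ as a \emph{composition} of $\F_p$-linearized polynomials; setting $X=f(x)$ then reduces (3) literally to the situation of (2). Your route avoids this structural fact entirely: from $\delta(b,a)=0$ on $\Ker f$ you only extract the ordinary divisibility $f\mid\delta(x,a)$, then bootstrap to $f^{p^{e_2}}\mid\delta(x,a)$ by repeatedly dividing the relation and re-evaluating on $\Ker f$, and close with the degree comparison $p\deg_x\delta(x,a)=p^{e_2}\deg f$. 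Your argument is more elementary in that it never invokes the right-division property of the twisted polynomial ring; the paper's is shorter and makes the parallel between (2) and (3) transparent.
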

\begin{proof}(1) The assertion follows by setting $x=y$ in \eqref{das2} since $\Delta(0)=0$.

(2) Let $a\in \Ker f$. By \eqref{Pdel}, 
 we have 
\begin{equation}\label{dela}
    \delta(x,a)^p+\delta(x,a)=x^{p^{e_1}}E_{R_1}(a).
\end{equation}
Assume that $E_{R_1}(a)$ is non-zero. Then \eqref{dela} and $e_1>0$ imply that 
$\delta(x,a)$ is a non-constant polynomial. Write $\delta(x,a)=\sum_{i=m}^nb_ix^i$ with $b_m,b_n\neq0$ and $n>0$. Then the highest degree appearing in $\delta(x,a)^p+\delta(x,a)$ is $pn$, while the lowest one is $m$. Note that $pn>m$. On the other hand, 
\eqref{dela} implies that $pn=m=p^{e_1}$.  This is a contradiction.  Thus we must have $E_{R_1}(a)=0$, which shows $a\in V_{R_1}$. 

From the equality $E_{R_1}(a)=0$ and \eqref{dela}, it follows that $\delta(x,a)\in\F_p$.
Since $\delta(0,a)=0$ by \eqref{das2}, we obtain $\delta(x,a)=0$.
The last assertion follows from {\rm (1)}, together with $f(a)=0$ and $\delta(a,a)=0$.

(3) Let $a\in (\Ker f)^{\perp}$. 
Since $a \in V_{R_1}$ and \eqref{Pdel}, we have 
\begin{equation}\label{dxa}
    \delta(x,a)^p+\delta(x,a)=f(x)^{p^{e_2}}E_{R_2}(f(a)).
\end{equation} 
First we prove that $\delta(x,a)$ can be written as $g(f(x))$ for some $g\in \F_q[x]$. Let $b\in \Ker f$. Since \eqref{das2} and $a \in (\Ker f)^{\perp}$, it follows that 
\[ 
\delta(b,a)-\delta(a,b)=\omega_{R_1}(a,b)=0. 
\]
Since $\delta(a,b)=0$ by $b \in \Ker f$ and (2), we obtain $\delta(b,a)=0$. 
We note that $\delta(x,a) \in \mathscr{A}$ by Lemma~\ref{lmPdel}.  
Since $f(x)$ is separable, this implies that $\delta(x,a)=g(f(x))$, as claimed. Since $\deg f>0$, substituting $X=f(x)$ in \eqref{dxa}, we obtain 
\[
g(X)^p+g(X)=X^{p^{e_2}}E_{R_2}(f(a)). 
\]
Then, by a similar argument as above, we conclude that $E_{R_2}(f(a))=0$ and $g(X)=0$. Equivalently, we have $f(a)\in V_{R_2}$ and $\delta(x,a)=0$. 
The last equality follows from (1) and 
$\delta(a,a)=0$. 
\end{proof}

The degrees of $R_1$, $R_2$ and $f$ satisfy the following relation.
\begin{lemma}\label{degcomp}
Let the notation and assumptions be as above. 
We have 
\[
\deg R_1=\deg f \cdot \deg R_2. 
\]
\end{lemma}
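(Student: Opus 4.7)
The plan is to compare degrees and one well-chosen lower-order coefficient on the two sides of
\[
\Delta(x)^p + \Delta(x) = xR_1(x) + f(x)R_2(f(x)).
\]
Write $R_1(x) = \sum_{i=0}^{e_1} a_i x^{p^i}$, $R_2(x) = \sum_{k=0}^{e_2} b_k x^{p^k}$, and $f(x) = \sum_{j=0}^{m} c_j x^{p^j}$, with leading coefficients $a_{e_1}, b_{e_2}, c_m$ all nonzero, so $\deg f = p^m$ and the claim becomes $e_1 = m+e_2$. Two elementary observations drive the argument: (i) $\deg(\Delta^p+\Delta) = p \deg \Delta$ is a multiple of $p$ whenever $\Delta$ is non-constant (which it must be, as the right-hand side has positive degree); (ii) the coefficient of $x^n$ in $\Delta^p+\Delta$ equals the coefficient $\delta_n$ of $x^n$ in $\Delta$ whenever $p \nmid n$, so this coefficient vanishes whenever $n > \deg \Delta$.

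Ruling out $e_1 > m+e_2$ is easy: in that range one checks $1+p^{e_1} > p^m+p^{m+e_2}$, so the right-hand side has degree $1+p^{e_1} \equiv 1 \pmod p$, contradicting (i).

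The main obstacle is excluding $e_1 < m+e_2$. Here the right-hand side has degree $p^m(1+p^{e_2})$, which by (i) forces $m \geq 1$ (since $1+p^{e_2} \not\equiv 0 \pmod p$) and pins down $\deg \Delta = p^{m-1}(1+p^{e_2})$. I would then inspect the coefficient of $x^{1+p^{m+e_2}}$ on each side. On the left, by (ii) this coefficient equals $\delta_{1+p^{m+e_2}}$, and a short check (in the subcases $m=1$ and $m \geq 2$) gives $1+p^{m+e_2} > p^{m-1}(1+p^{e_2}) = \deg \Delta$, so it vanishes. On the right, $xR_1(x)$ contributes nothing because $1+p^{m+e_2} = 1+p^i$ would demand $i = m+e_2 > e_1$, while expanding
\[
f(x)R_2(f(x)) = \sum_{j,j',k} c_j b_k c_{j'}^{p^k}\, x^{p^j+p^{j'+k}},
\]
the only admissible triple with $p^j+p^{j'+k}=1+p^{m+e_2}$, $j,j' \leq m$, $k \leq e_2$ is $(0,m,e_2)$, producing the single term $c_0 b_{e_2} c_m^{p^{e_2}}$. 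This is nonzero: $b_{e_2}$ and $c_m$ are the leading coefficients of $R_2$ and $f$, and $c_0 \neq 0$ because the derivative of $f$ equals $c_0$ and $f$ is assumed separable. The resulting contradiction forces $e_1 = m+e_2$, and hence $\deg R_1 = \deg f \cdot \deg R_2$.
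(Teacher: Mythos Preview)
Your proof is correct and follows essentially the same approach as the paper's: both expand $f(x)R_2(f(x))$, isolate the unique monomial contributing to the exponent $1+p^{m+e_2}$ (using $c_0\neq 0$ from separability of $f$), and exploit that $\deg(\Delta^p+\Delta)$ is a multiple of $p$. The only organizational difference is that the paper first replaces $fR_2(f)$ by an equivalent polynomial $R_3$ under the relation $\sim$ so that every exponent of $R_3$ is $\equiv 1\pmod p$, which lets a single degree comparison handle both directions simultaneously, whereas you split into the two cases $e_1>m+e_2$ and $e_1<m+e_2$.
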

\begin{proof}
We write 
$R_2(x)=\sum_{i=0}^{e_2} b_i x^{p^i}$ and 
$f(x)=\sum_{i=0}^r c_i x^{p^i}$ with 
$c_r \neq 0$. Since $f$ is separable, 
we have $c_0 \neq 0$. 
We have 
\begin{align*}
f(x) R_2(f(x))&= \sum_{i=0}^{e_2} 
\sum_{j=0}^r \sum_{k=0}^r b_i c_j^{p^i} c_k 
x^{p^{i+j}+p^k} \\
& \sim R_3(x)\coloneqq \sum_{i=0}^{e_2} 
\sum_{j=0}^r \sum_{k=0}^r (b_i c_j^{p^i} c_k)^{p^{-r_{i,j,k}}} 
x^{p^{-r_{i,j,k}}(p^{i+j}+p^k)}, 
\end{align*}
where $\sim$ denotes the equivalence defined in Section \ref{Introduction} and 
$r_{i,j,k}\coloneqq \min\{i+j,k\}$.
Since $b_{e_2} c_r^{p^{e_2}} c_0 \neq 0$ and 
$p^{-r_{i,j,k}}(p^{i+j}+p^k) < p^{e_2+r}+1$
for any $(i,j,k)\neq (e_2,r,0)$, 
we obtain $\deg R_3=p^{e_2+r}+1$. 
By \eqref{das}, there exists a polynomial $d(x) \in \F_q[x]$ such that 
\[
d(x)^p+d(x)=x R_1(x)+R_3(x). 
\]

Note that 
$\deg (d^p+d)$ is divisible by $p$  if $d(x)^p+d(x)\neq0$. Since $\deg x R_1(x)=p^{e_1}+1$ and 
$\deg R_3=p^{e_2+r}+1$ are prime to $p$, we must have 
$p^{e_1}+1=p^{e_2+r}+1$. This proves the claim. 
\end{proof}

From the above discussion, we obtain the following consequence. 
\begin{proposition}\label{KerfA}
\begin{itemize} 
    \item[{\rm (1)}]  The subspace $\Ker f\subset V_{R_1}$ is totally isotropic. The linear map $f\colon (\Ker f)^\perp\to V_{R_2}$ satisfies 
    \[\omega_{R_1}(x,y)=\omega_{R_2}(f(x),f(y)).\]

    \item[{\rm (2)}] 
For $i=1,2$, let $\pi_i$ denote the projections $H_{R_i}\to V_{R_i}$. 
    Let $I_f\subset H_{R_1}$ denote the inverse image of $(\Ker f)^\perp$ via the projection $\pi_1$. Then the assignment $\psi_f(a,b)=(f(a),b+\Delta(a))$ defines a commutative diagram of groups 
    \begin{equation}\label{exHf}
    \xymatrix{
1\ar[r]&\Ker\psi_f\ar[d]^-{\simeq}\ar[r]&I_f\ar[d]^-{\pi_1}\ar[r]^-{\psi_f}&H_{R_2}\ar[d]^-{\pi_2}\ar[r]&1\\
1\ar[r]&\Ker f\ar[r]&(\Ker f)^\perp\ar[r]^-f& V_{R_2}\ar[r]&1 
}
\end{equation}
with exact rows. Furthermore, the left vertical arrow $\Ker\psi_f\to\Ker f$ and the map 
$\psi_f\colon \Ker\pi_1\to\Ker\pi_2$
are isomorphisms. 
\item[{\rm (3)}] Let the notation be as in {\rm (2)}. Then, for a maximal abelian subgroup $A\subset H_{R_2}$, the inverse image $\psi_f^{-1}(A)$ is maximal abelian in $H_{R_1}$. Moreover,  for any maximal abelian subgroup $A'\subset H_{R_1}$ containing $\Ker\psi_f$, 
there exists a maximal abelian subgroup $A\subset H_{R_2}$ such that $A'=\psi_f^{-1}(A)$. 
\end{itemize}
\end{proposition}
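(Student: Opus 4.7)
The plan for (1) is to combine Lemma \ref{Kerf} with a symmetrization of $\delta$. In characteristic $2$, summing the definition \eqref{das2} with itself after swapping $x,y$ cancels all of the $\Delta$-terms and yields
\[ \delta(x,y) + \delta(y,x) = \omega_{R_1}(x,y) + \omega_{R_2}(f(x),f(y)). \]
Specializing to $x,y \in \Ker f$ and applying Lemma \ref{Kerf} (2) (both $\delta$-terms vanish and $f(x)=f(y)=0$) gives $\omega_{R_1}(x,y)=0$, so $\Ker f$ is totally isotropic. Specializing instead to $x,y \in (\Ker f)^\perp$ and applying Lemma \ref{Kerf} (3) gives the compatibility $\omega_{R_1}(x,y) = \omega_{R_2}(f(x),f(y))$.

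For (2), I would first verify that $\psi_f$ is well-defined on $I_f$: for $(a,b) \in I_f$, Lemma \ref{Kerf} (3) gives $f(a) \in V_{R_2}$, and summing $b^p + b = aR_1(a)$ with \eqref{das} yields $(b+\Delta(a))^p + (b+\Delta(a)) = f(a)R_2(f(a))$. Next, comparing the second components of $\psi_f((a,b)(a',b'))$ and $\psi_f(a,b)\psi_f(a',b')$ shows that the homomorphism property is exactly the vanishing of $\delta(a,a')$ for $a,a' \in (\Ker f)^\perp$, which is Lemma \ref{Kerf} (3); commutativity of the diagram is built into the definitions. The kernel computation gives $\Ker\psi_f = \{(a,\Delta(a)) : a \in \Ker f\}$, projecting bijectively to $\Ker f$ via $\pi_1$, and $\psi_f|_{\Ker \pi_1}$ is the identity because $\Delta(0)=0$. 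The crux is exactness on the right: writing $p^r = \deg f$, the total isotropy of $\Ker f$ from (1) gives $\dim_{\F_p}(\Ker f)^\perp = 2e_1 - r$, so $\dim_{\F_p} f((\Ker f)^\perp) = 2e_1 - 2r = 2e_2 = \dim_{\F_p} V_{R_2}$, using the degree identity $e_1 = e_2 + r$ from Lemma \ref{degcomp}. Surjectivity of $\psi_f \colon I_f \to H_{R_2}$ then follows by lifting any $c \in V_{R_2}$ to some $a \in (\Ker f)^\perp$ and setting $b = d + \Delta(a)$.

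For (3), the inverse image $\psi_f^{-1}(A)$ is abelian by Lemma \ref{2.3} and (1): a commutator of $(a,b),(a',b') \in \psi_f^{-1}(A)$ equals $(0,\omega_{R_1}(a,a')) = (0,\omega_{R_2}(f(a),f(a')))$, which is trivial because $A$ is abelian in $H_{R_2}$. Its order $|A|\cdot|\Ker\psi_f| = p^{e_2+1}\cdot p^r = p^{e_1+1}$ matches the size of any maximal abelian subgroup of $H_{R_1}$, so $\psi_f^{-1}(A)$ is maximal abelian. Conversely, given a maximal abelian $A' \supset \Ker\psi_f$, the isotropic subspace $\pi_1(A')$ contains $\Ker f = \pi_1(\Ker\psi_f)$ and hence lies inside $(\Ker f)^\perp$, so $A' \subset I_f$; then $A \coloneqq \psi_f(A')$ is abelian of order $|A'|/|\Ker\psi_f| = p^{e_2+1}$, i.e.\ maximal abelian, and $\psi_f^{-1}(A) = A'$ since $\Ker\psi_f \subset A'$. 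The main obstacle I expect is the bookkeeping underlying (2) and (3): making the bottom row exact on the right, and matching orders in (3), both hinge critically on the degree identity $e_1 = e_2 + r$ from Lemma \ref{degcomp} together with the isotropy of $\Ker f$ established in (1).
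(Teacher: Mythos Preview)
Your proof is correct and follows essentially the same route as the paper: Lemma \ref{Kerf} drives (1), the vanishing $\delta(x,a')=0$ for $a'\in(\Ker f)^\perp$ gives the homomorphism property in (2), and the order counts built on Lemma \ref{degcomp} together with the isotropy from (1) handle exactness and (3). The only noticeable variation is in (1): you use the symmetrized identity $\delta(x,y)+\delta(y,x)=\omega_{R_1}(x,y)+\omega_{R_2}(f(x),f(y))$ directly, whereas the paper instead passes through the identity $\omega_{R_i}(a,b)=f_{R_i}(a+b,a+b)+f_{R_i}(a,a)+f_{R_i}(b,b)$ and the consequences $f_{R_1}(a,a)=0$ (resp.\ $f_{R_1}(a,a)=f_{R_2}(f(a),f(a))$) from Lemma \ref{Kerf}. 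Your version is a shade more direct; the paper's has the small advantage of isolating the diagonal values $f_{R_i}(a,a)$, which are reused later (e.g.\ in defining $\oU$). In (3) you compute $|\psi_f(A')|=p^{e_2+1}$ to conclude maximality outright, while the paper embeds $\psi_f(A')$ in some maximal abelian $A$ and then compares $A'$ with $\psi_f^{-1}(A)$; both are fine.
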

\begin{proof}
(1) Note that for $a,b\in V_{R_i}$ ($i=1,2$),  
\begin{equation}\label{wab}
    \omega_{R_i}(a,b)=f_{R_i}(a+b,a+b)+f_{R_i}(a,a)+f_{R_i}(b,b).
\end{equation} 
Let $c\in\Ker f$. By Lemma~\ref{Kerf} (2), we have $f_{R_1}(c,c)=0$. Together with \eqref{wab}, this shows that $\Ker f$ is totally isotropic with respect to $\omega_{R_1}$. 
From \eqref{das2} and Lemma~\ref{Kerf} (3), we see that for 
$x,y \in (\Ker f)^{\perp}$, 
\[
\omega_{R_1}(x,y)+\omega_{R_2}(f(x),f(y))=\delta(x,y)+\delta(y,x)=0. 
\]
This finishes the proof. 

(2)  First, we check that $\psi_f(a,b)=(f(a),b+\Delta(a)) \in H_{R_2}$. By $a \in (\Ker f)^{\perp}$ and Lemma~\ref{Kerf} (3), we know that $f(a)\in V_{R_2}$. By \eqref{das}, we compute 
\[(b+\Delta(a))^p+(b+\Delta(a))=aR_1(a)+aR_1(a)+f(a)R_2(f(a))=f(a)R_2(f(a)), 
\]
which proves $\psi_f(a,b)\in H_{R_2}$. 

We show that $\psi_f$ is a group homomorphism. We have 
\begin{align*}
\psi_f((a,b) \cdot (a',b'))&=\psi_f(a+a',\ b+b'+f_{R_1}(a,a'))\\
&=(f(a)+f(a'),\ b+b'+\Delta(a+a')+f_{R_1}(a,a')). 
\end{align*}
On the other hand, we compute 
\begin{align*}
\psi_f(a,b) \cdot \psi_f(a',b')&=(f(a),b+\Delta(a)) \cdot (f(a'),b'+\Delta(a'))\\
&=(f(a)+f(a'),\ b+b'+\Delta(a)+\Delta(a')+f_{R_2}(f(a),f(a'))). 
\end{align*}
Since $a' \in (\Ker f)^\perp$, we have $\delta(x,a')=0$ by Lemma~\ref{Kerf} (3).
Hence, by $\delta(a,a')=0$ and \eqref{das2}, we obtain
\[
\psi_f((a,b)\cdot(a',b'))=\psi_f(a,b) \cdot \psi_f(a',b').
\]

From the definition of $\psi_f$, we see that the right square of \eqref{exHf} is commutative. 
Note that $\Ker f\subset(\Ker f)^\perp$, since $\Ker f$ is totally isotropic by (1). Therefore, the diagram \eqref{exHf} is induced. 
By the non-degeneracy of $\omega_{R_1}$ and Lemma~\ref{degcomp}, we have 
\[|(\Ker f)^{\perp}/\Ker f|=|V_{R_1}/\Ker f|/|\Ker f|
=(\deg R_1/\deg f)^2=(\deg R_2)^2=|V_{R_2}|.\] 
Hence, the bottom horizontal line in \eqref{exHf} is exact. 
We consider the commutative diagram of exact sequences 
\[
\xymatrix{
1 \ar[r] & \Ker \pi_1 \ar[r]\ar[d]^{\psi_f}_{\simeq} & I_f \ar[d]^-{\psi_f}\ar[r]^-{\pi_1} & (\Ker f)^{\perp} \ar[r]\ar[d]^f & 1 \\
1 \ar[r] & \Ker \pi_2 \ar[r] & H_{R_2}  \ar[r]^-{\pi_2} & V_{R_2} \ar[r] & 1.  
}
\]
Since $\Delta(0)=0$, the map $\psi_f\colon \Ker \pi_1\to \Ker \pi_2$ is identified with the identity map $\F_p\to \F_p$. Thus the surjectivity of $\psi_f$ follows from the surjectivity of $f \colon (\Ker f)^{\perp} \to V_{R_2}$ and the snake lemma. The bijectivity of the left vertical arrow
in \eqref{exHf} follows from the isomorphism $\psi_f \colon \Ker \pi_1 \xrightarrow{\sim}
\Ker \pi_2$, the surjectivity $\pi_1 \colon I_f \to (\Ker f)^{\perp}$ and 
the snake lemma. 

(3) First, we prove that $A_1\coloneqq \psi_f^{-1}(A)$ is abelian. Take $g,h\in A_1$. Then we have $\psi_f([g,h])=1$. Therefore, we have $[g,h]\in \Ker \psi_f$. This implies that $[g,h]=1$ since the map $\pi_1\colon\Ker\psi_f\to V_{R_1}$ is injective by (2). 

We have $\pi_1(A_1)=f^{-1}(\pi_2(A))$ by 
\eqref{exHf}. 
The maximality of $A$ implies $|\pi_2(A)|=p^{e_2}$. 
From the exactness of the bottom row in \eqref{exHf}, 
it follows that $|\pi_1(A_1)|=\deg f \cdot 
|\pi_2(A)|$, which equals $p^{e_1}$ by 
Lemma~\ref{degcomp}. Thus 
the maximality of $A_1$ follows. 

Conversely, let $A'\subset H_{R_1}$ be a maximal abelian subgroup containing $\Ker\psi_f$. Since $\pi_1(A')$ is totally isotropic and contains $\Ker f$, we must have $A'\subset I_f$. Since $\psi_f$ is a group homomorphism, $\psi_f(A')$ is abelian. Therefore, there exists a maximal abelian subgroup $A\subset H_{R_2}$ such that $A'\subset \psi_f^{-1}(A)$. The equality $A'=\psi^{-1}_f(A)$ then follows from the maximality of $A'$ and $\psi_f^{-1}(A)$. 
\end{proof}

As a consequence, we obtain the following relation between the curves $C_{R_1}$ and $C_{R_2}$. 

 \begin{lemma}\label{phif}
 \begin{itemize}
     \item[{\rm (1)}] There exists a cartesian square of $\F_q$-schemes 
\[
\xymatrix{C_{R_1}\ar[r]^-{\phi_f}\ar[d]&C_{R_2}\ar[d]\\
\A^1_{\F_q}\ar[r]^-f&\A^1_{\F_q}, 
}
\]
where the vertical arrows are given by $(x,y)\mapsto x$ and $\phi_f$ is defined by 
\[\phi_f(x,y) = (f(x),y+\Delta(x)). \]
  \item[{\rm (2)}]  The map $\phi_f$ is finite \'etale.  If $\Ker f \subset \F_q$, then $\phi_f$  is a Galois covering with Galois group 
$\Ker f$. 
 \item[{\rm (3)}] 
The map $\phi_f$ is equivariant with respect to $\psi_f\colon I_f\to H_{R_2}$. 
 \end{itemize}

\end{lemma}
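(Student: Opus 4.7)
For part (1), my plan is first to verify that $\phi_f$ lands in $C_{R_2}$, which reduces upon expanding $(y+\Delta(x))^p+(y+\Delta(x))$ to the polynomial identity \eqref{das}, and then to check that the square is cartesian by identifying $\A^1_{\F_q}\times_{\A^1_{\F_q}}C_{R_2}$ with the affine curve $v^p+v=f(x)R_2(f(x))$ and exhibiting an explicit inverse to the induced map $C_{R_1}\to \A^1_{\F_q}\times_{\A^1_{\F_q}}C_{R_2}$, $(x,y)\mapsto (x,y+\Delta(x))$, again by a single application of \eqref{das}. In characteristic $2$ the various signs in these Artin--Schreier equations align without further ado.

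For part (2), the separable $\F_p$-linearized polynomial $f$ defines a finite \'etale homomorphism of $\F_q$-group schemes $\A^1_{\F_q}\to\A^1_{\F_q}$ with kernel $\Ker f$ (separability of $f$ means its derivative is a nonzero constant, so the map is everywhere \'etale). When $\Ker f\subset \F_q$, this kernel is a constant group scheme over $\F_q$ and translation makes $f$ into a Galois \'etale covering with Galois group $\Ker f$. Both properties then transfer to $\phi_f$ by flat base change along the cartesian square from (1).

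For part (3), I would directly compare the two sides of the desired equality $\phi_f((x,y)\cdot(a,b))=\phi_f(x,y)\cdot\psi_f(a,b)$ by unwinding the Heisenberg action \eqref{std}, the group law \eqref{gpl}, and the explicit formula for $\psi_f$ given in Proposition \ref{KerfA} (2). The first coordinates agree trivially, and equating second coordinates reduces the statement to
\[
\Delta(x+a)+\Delta(x)+\Delta(a)+f_{R_1}(x,a)+f_{R_2}(f(x),f(a))=0,
\]
i.e., $\delta(x,a)=0$ in the notation of \eqref{das2}. Since $(a,b)\in I_f=\pi_1^{-1}((\Ker f)^\perp)$, the element $a$ lies in $(\Ker f)^\perp$, so Lemma \ref{Kerf} (3) furnishes precisely this vanishing.

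Overall the argument is essentially bookkeeping: the substantive polynomial identities have already been set up in Lemmas \ref{lmPdel} and \ref{Kerf}, and the characteristic-$2$ hypothesis flattens any sign discrepancies between the various Artin--Schreier equations. I do not anticipate a real obstacle; the only delicate point is checking that the explicit inverse in (1) actually produces a well-defined scheme map, and that step is formally identical to the well-definedness of $\phi_f$ itself.
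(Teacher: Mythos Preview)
Your proposal is correct and follows essentially the same route as the paper: identify the fiber product with the curve $v^p+v=f(x)R_2(f(x))$ and use \eqref{das} to produce the isomorphism with $C_{R_1}$ for part (1); deduce (2) from the corresponding properties of $f\colon\A^1_{\F_q}\to\A^1_{\F_q}$ by base change; and for (3) reduce the equivariance to $\delta(x,a)=0$ for $a\in(\Ker f)^\perp$, which is exactly Lemma \ref{Kerf} (3). The paper's own proof of (3) simply refers back to the analogous computation in Proposition \ref{KerfA} (2), which is the same reduction you carry out explicitly.
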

\begin{proof}
(1) Let $C$ denote the fiber product of $\A^1_{\F_q}\xrightarrow{f}\A^1_{\F_q}\leftarrow C_{R_2}$. The $\F_q$-scheme $C$ is identified with the affine curve defined by the equation 
\[y^p+y=f(x)R_2(f(x)).\]
By \eqref{das}, this can be rewritten as 
\[(y+\Delta(x))^p+(y+\Delta(x))=xR_1(x). \]
Therefore, $C$ is isomorphic to $C_{R_1}$ via $(x,y)\mapsto (x,y+\Delta(x))$. The assertion follows. 

(2) The claim follows from (1) and the corresponding assertion for $f\colon \A^1_{\F_q}\to \A^1_{\F_q}$. 

(3) Let $(x,y)\in C_{R_1}$ and $(a,b)\in I_f$. Then the equality
\[
\phi_f((x,y)\cdot(a,b))
=\phi_f(x,y)\cdot\psi_f(a,b)
\]
follows from $\delta(x,a)=0$ in 
Lemma~\ref{das2} (3),
in the same way as the equality
$\psi_f((a,b)\cdot(a',b'))
=\psi_f(a,b)\cdot\psi_f(a',b')$
proved in Proposition~\ref{KerfA} (2).
\end{proof}

\subsection{The case where $R_2(x)=x^p+s_0x$}\label{RS}

Let
$R(x) = \sum_{i=0}^e a_i x^{p^i} \in \mathbb{F}_q[x]$
be an $\F_p$-linearized polynomial with $e\geq1$ and $a_e\neq0$. 
 The following lemma is a characteristic-two analogue of \cite[Lemma~4.9]{Ts}, and it also generalizes \cite[Proposition~9.1]{GV}.
\begin{lemma}\label{q0}
Let $(a,b)\in H_R$ satisfy $a\neq 0$ and $f_R(a,a)=0$.
Let $f(x)\coloneqq x^p+a^{p-1} x$ and 
$\Delta_b(x)\coloneqq (x/a)(b(x/a) + f_R(x,a))$. Assume $(a,b)\in \F_q^2$. 
\begin{itemize}
\item[{\rm (1)}] There exists an $\F_p$-linearized polynomial $P_{b}(x) \in 
\mathscr{A}$ of degree $p^{e-1}$ such that 
\[
\Delta_b(x)^p+\Delta_b(x)=x R(x)+f(x) P_{b}(f(x)). 
\]
\item[{\rm (2)}] 
The morphism
\[
C_R \to C_{P_{b}}, \quad 
(x,y) \mapsto (f(x),\,y+\Delta_b(x))
\]
is a finite \'etale Galois covering with Galois group 
 $\Ker f=\F_p\, a$. 
\end{itemize}
\end{lemma}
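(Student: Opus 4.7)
The plan is to prove (1) by a direct computation of $\Delta_b(x)^p + \Delta_b(x)$, from which $P_b$ can be read off, and then deduce (2) immediately from Lemma \ref{phif}.

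For (1), the two key inputs are the Heisenberg relations: $b^p + b = aR(a)$ (since $(a,b) \in H_R$), and \eqref{a} applied at $y=a$, which, together with $a \in V_R$ (i.e., $E_R(a)=0$), yields
\[
f_R(x,a)^p + f_R(x,a) = xR(a) + aR(x).
\]
Writing $\Delta_b(x) = b(x/a)^2 + (x/a) f_R(x,a)$, applying Frobenius, and substituting these two identities, I expect to obtain after grouping the factorization
\[
\Delta_b(x)^p + \Delta_b(x) + xR(x) = f(x) Q(x),
\]
where
\[
Q(x) = \frac{b\, f(x)}{a^{2p}} + \frac{R(a)\, x^p}{a^{2p-1}} + \frac{f_R(x,a)}{a^p} + \frac{R(x)}{a^{p-1}}.
\]
Extracting the factor $f(x)$ uses the elementary identities $(x/a)^{2p} + (x/a)^2 = f(x)^2/a^{2p}$, $x^{2p}/a^{2p-1} + x^{p+1}/a^p = x^p f(x)/a^{2p-1}$, and $x^p/a^{p-1} + x = f(x)/a^{p-1}$ (the last one combining $a(x/a)^p R(x)$ with $xR(x)$).

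To identify $Q(x)$ as $P_b(f(x))$, I would directly verify $Q(x+a) = Q(x)$ using $f_R(x+a,a) = f_R(x,a) + f_R(a,a) = f_R(x,a)$ (from the hypothesis $f_R(a,a)=0$), $R(x+a) = R(x) + R(a)$, and $(x+a)^p = x^p + a^p$; the two copies of $R(a)/a^{p-1}$ that appear cancel because we are in characteristic $2$. Iterating gives $Q(x + \alpha a) = Q(x)$ for every $\alpha \in \F_p$, and since the ring of $\F_p a$-invariants of $\F_q[x]$ is $\F_q[f(x)]$ (standard, as $f$ is separable $\F_p$-linearized of degree $p$ with $\Ker f = \F_p a$), there is a unique $P_b \in \F_q[y]$ with $Q(x) = P_b(f(x))$. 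Each term of $Q$ is $\F_p$-linearized in $x$ (using that $f_R$ is $\F_p$-linear in its first argument), so $Q$, and hence $P_b$, is $\F_p$-linearized; the equality $\deg P_b = p^{e-1}$ follows from $\deg Q = p^e$, the top-degree term coming from $R(x)/a^{p-1}$.

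For (2), the identity from (1) is precisely the hypothesis \eqref{das} of Lemma \ref{phif} with $R_1 = R$, $R_2 = P_b$, and $\Delta = \Delta_b$. The polynomial $f(x) = x^p + a^{p-1} x$ is separable (its derivative is the nonzero constant $a^{p-1}$), and $\Ker f = \F_p a \subset \F_q$ since $a \in \F_q$. Hence Lemma \ref{phif} gives that the morphism $(x,y) \mapsto (f(x), y + \Delta_b(x))$ is a finite \'etale Galois covering $C_R \to C_{P_b}$ with Galois group $\Ker f = \F_p a$, proving (2). The main obstacle is the calculation in part (1): one must carefully track the many monomial terms through the Heisenberg relations to pull out the factor $f(x)$, and then verify the translation invariance of $Q$; both steps rely crucially on characteristic $2$ cancellations.
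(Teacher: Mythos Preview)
Your proposal is correct and follows essentially the same approach as the paper: both compute $\Delta_b^p+\Delta_b+xR(x)$ using $b^p+b=aR(a)$ and \eqref{a} with $E_R(a)=0$, factor out $f(x)$ to obtain the same polynomial $Q(x)$ (which the paper calls $P(x)$), and then invoke Lemma~\ref{phif} for part~(2). The only cosmetic differences are that the paper phrases the factorization step as ``$P$ is $\F_p$-linearized with $P(\xi a)=\xi a^{-p}f_R(a,a)=0$, hence $P=P_b\circ f$'' rather than your translation-invariance formulation, and the paper reads off $\deg P_b=p^{e-1}$ from Lemma~\ref{degcomp} rather than from $\deg Q=p^e$ directly; both variants are equivalent for additive polynomials.
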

\begin{proof}
(1) 
Let $x_1\coloneqq x/a$. Then 
$a^{-p}f(x)=x_1^p+x_1$. 
A direct computation shows that
\begin{align*}
xR(x)+\Delta_b(x)^p+\Delta_b(x)&=
xR(x)
+b^p x_1^{2p}+b x_1^2+x_1^p
{f_R(x,a)}^p+x_1 f_R(x,a) \\
&=xR(x)+b(x_1^{2p}+x_1^2)+
a^{-2p+1} R(a) x^{2p}\\
&\ \ \ +a^{-p} f(x) f_R(x,a)+(x/a)^p(a R(x)+x R(a)) \\
&=a^{-p}f(x) (a R(x)+a^{-p+1}
R(a) x^p+b (x_1^p+x_1)+f_R(x,a)), 
\end{align*}
where we use $b^p+b=a R(a)$ and \eqref{a} for the second equality. 
Let
\[ 
P(x) \coloneqq a^{-p}(a R(x)+a^{-p+1}
R(a) x^p+b (x_1^p+x_1)+f_R(x,a)) 
\in \mathscr{A}. \] 
By $P(x) \in \mathscr{A}$ and $f_R(a,a)=0$, 
we have $P(\xi a)=\xi P(a)=
\xi a^{-p}f_R(a,a)=0$ for $\xi \in \F_p$.  
Thus there exists an $\F_p$-linearized polynomial
$P_{b}(x) \in \mathscr{A}$ such that 
$P(x)=P_{b}(f(x))$. Hence we obtain the displayed equality. The assertion $\deg P_b=p^{e-1}$ follows from Lemma~\ref{degcomp}.

(2) This follows from Lemma~\ref{phif} for $(R_1,R_2,\Delta)=(R,P_b,\Delta_b)$.
\end{proof}
In the rest of this section, 
let $\overline{A} \subset V_R$ be a maximal 
totally isotropic subspace with respect to $\omega_R$
and let $A\coloneqq \pi^{-1}(\overline{A}) \subset H_R$. 
Assume $A \subset \F_q^2$. 
Put  
\begin{equation}\label{oUdef}
    \oU\coloneqq \{a \in \overline{A} \mid 
f_R(a,a)=0\}.
\end{equation} 

\begin{lemma}\label{oU}
    The subset $\oU\subset V_R$ is an $\F_p$-vector space. We have 
    \[e-1\leq\dim\oU\leq e. \]
\end{lemma}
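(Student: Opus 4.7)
The plan is to study the function $\varphi\colon\overline{A}\to\F_p$ defined by $\varphi(a)=f_R(a,a)$. First, I would check that $\varphi$ actually takes values in $\F_p$: setting $y=x$ in \eqref{a} gives
\[ f_R(x,x)^p-f_R(x,x)=-x^{p^e}E_R(x)+2xR(x), \]
and for $a\in\overline{A}\subset V_R$ the first term vanishes by $E_R(a)=0$ while the second vanishes since we are in characteristic $2$. Hence $\varphi(a)\in\F_p$.

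Next I would show that $\oU=\varphi^{-1}(0)$ is an $\F_p$-subspace. The defining formula \eqref{-a} shows that $f_R(x,y)$ is $\F_p$-bilinear as a function of $x,y\in\F$: additivity uses Frobenius additivity together with the $\F_p$-linearity of $R$, and for $\mu\in\F_p$ one has $\mu^{p^n}=\mu$ for every $n$, giving $f_R(\mu x,y)=\mu f_R(x,y)$. Bilinearity yields
\[ f_R(a+b,a+b)=f_R(a,a)+f_R(b,b)+f_R(a,b)+f_R(b,a). \]
In characteristic $2$, the definition $\omega_R(x,y)=f_R(x,y)-f_R(y,x)$ becomes $\omega_R(x,y)=f_R(x,y)+f_R(y,x)$, and this vanishes on $\overline{A}\times\overline{A}$ by total isotropy of $\overline{A}$ with respect to $\omega_R$. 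Thus $\varphi$ is additive, and bilinearity also gives $\varphi(\lambda a)=\lambda^2\varphi(a)$ for $\lambda\in\F_p$, so $\lambda a\in\oU$ whenever $a\in\oU$. This proves the first assertion.

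The upper bound $\dim\oU\leq e$ is immediate from $\oU\subset\overline{A}$ and $\dim\overline{A}=e$. For the lower bound, the key observation is that $\overline{A}/\oU$ carries two structures in tension. Since $\oU$ is an $\F_p$-subspace of $\overline{A}$, the quotient is an $\F_p$-vector space and so $|\overline{A}/\oU|$ is a power of $p$. On the other hand, although $\varphi$ is not $\F_p$-linear (it is only Frobenius-semilinear, since $\varphi(\lambda a)=\lambda^2\varphi(a)$), it remains $\F_2$-additive and descends to an $\F_2$-linear injection $\overline{A}/\oU\hookrightarrow\F_p$, which forces $|\overline{A}/\oU|\leq|\F_p|=p$. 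Combining these constraints yields $|\overline{A}/\oU|\in\{1,p\}$, hence $\dim_{\F_p}\oU\in\{e-1,e\}$. The main subtlety throughout is precisely this semilinearity of $\varphi$, which prevents any direct $\F_p$-linear-algebra argument and forces one to play the $\F_p$-structure on the quotient against the $\F_2$-linear embedding into $\F_p$ induced by $\varphi$.
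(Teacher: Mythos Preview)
Your proof is correct and follows essentially the same approach as the paper: both show that $a\mapsto f_R(a,a)$ is additive on $\overline{A}$ using total isotropy, verify $\F_p$-stability of $\oU$ via $f_R(\lambda a,\lambda a)=\lambda^2 f_R(a,a)$, and use \eqref{a} to see that the map lands in $\F_p$. You are more explicit than the paper about the final step, spelling out why the kernel of the (merely semilinear) map $\overline{A}\to\F_p$ has $\F_p$-codimension $0$ or $1$ by combining the $\F_p$-vector space structure on $\overline{A}/\oU$ with the $\F_2$-linear embedding into $\F_p$; the paper leaves this implicit.
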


\begin{proof}
The map $\overline{A}\to \F$ defined by $a\mapsto f_R(a,a)$ is additive since 
\[
f_R(a+a',a+a')-f_R(a,a)-f_R(a',a')=f_R(a,a')+f_R(a',a)=\omega_R(a,a')=0
\]
for $a,a'\in\overline{A}$. 
Thus the first assertion follows since 
$f_R(ba,ba)=b^2f_R(a,a)=0$ for $a \in \oU$ and $b\in\F_p$. 

From \eqref{a}, it follows that  
\[f_R(x,x)^p+f_R(x,x)=x^{p^e}E_R(x).\]
  Evaluating at $x = a \in \overline{A}$, it follows that $f_R(a,a) \in \F_p$. The second assertion then follows since 
   $\overline{A}$ has dimension $e$ and $\oU$ is the kernel of $\overline{A}\to\F_p,\ a \mapsto f_R(a,a)$. 
\end{proof}

In the lemmas below, we actually prove $\dim\oU=e-1$. 

\begin{lemma}\label{q01}
Assume $e=1$. Then we have $\oU=0$. 
 \end{lemma}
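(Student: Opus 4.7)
The plan is to evaluate $f_R(a,a)$ directly from the defining formula \eqref{-a} in the case $e=1$, and then observe that the resulting expression vanishes only for $a=0$. Since $\oU$ is by definition \eqref{oUdef} the kernel of the map $a \mapsto f_R(a,a)$ on $\overline{A}$, this immediately gives $\oU = \{0\}$.

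Specializing \eqref{-a} to $e=1$, the outer sum collapses to the single index $i=0$ and the inner sum to $j=0$, leaving $f_R(x,y) = -\bigl(a_0 xy + xR(y)\bigr)$. Substituting $R(y) = a_0 y + a_1 y^p$ yields $f_R(x,y) = -2a_0 xy - a_1 xy^p$, and in characteristic $2$ the first summand is annihilated, so $f_R(x,y) = a_1 xy^p$. Restricting to the diagonal gives $f_R(a,a) = a_1 a^{p+1}$ for every $a \in \overline{A}$.

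Since $a_1 = a_e \neq 0$ by the standing hypothesis on the leading coefficient of $R$, the monomial $a_1 a^{p+1}$ is nonzero precisely when $a \neq 0$. This is exactly the statement that $\oU = \{0\}$. There is no genuine obstacle; the entire argument rests on the observation that the cross term $2a_0 xy$ in $f_R$ is killed by the characteristic, leaving a clean monomial whose only zero on $\overline{A}$ is the origin. One could optionally cross-check consistency with Lemma \ref{oU} by verifying that $a_1 a^{p+1}$ lies in $\F_p$ for $a \in V_R$, but this is not logically needed for the proof.
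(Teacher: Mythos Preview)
Your proof is correct and follows essentially the same approach as the paper's own proof: both reduce the formula \eqref{-a} to $f_R(x,y)=a_1xy^p$ in the case $e=1$ and conclude from $a_1\neq 0$ that $f_R(a,a)=a_1a^{p+1}$ vanishes only at $a=0$. You simply spell out the intermediate cancellation of $2a_0xy$ in characteristic~$2$, which the paper leaves implicit.
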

 \begin{proof}
 Let $a\in \oU$. 
Since $e=1$,  \eqref{-a} is reduced to $f_R(x,y)=a_1 x y^p$, where $a_1 \neq 0$ is the coefficient of $x^p$ in $R(x)$. Substituting $x = y = a$ gives 
$f_R(a,a) = a_1 a^{p+1}=0$.
Hence we have $a=0$ since $a_1 \neq 0$. 
 \end{proof}
\begin{lemma}\label{suc}
\begin{itemize}
    \item [{\rm (1)}]

 There exist a separable $\F_p$-linearized polynomial 
$f_{\oU}(x)\in \mathscr{A}$,
an $\F_p$-linearized polynomial $S(x) = s_1 x^p + s_0 x\ (s_1 \neq 0) \in \mathscr{A}$,  
and a polynomial $\Delta(x) \in 
\F_q[x]$ such that 
$\oU=\Ker f_{\oU}$ and 
\[
\Delta(x)^p+\Delta(x)=xR(x)+f_{\oU}(x)
S(f_{\oU}(x)). 
\] 

\item[{\rm (2)}]
We have $\dim \oU=e-1$. 
\end{itemize}
\end{lemma}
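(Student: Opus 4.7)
The plan is to prove (1) and (2) simultaneously by induction on $e$, using Lemma~\ref{q0} to peel off one $\F_p$-dimension at each step and identify $C_R$ with a finite étale cover of a van der Geer--van der Vlugt curve of strictly smaller degree. For the base case $e = 1$, Lemma~\ref{q01} gives $\oU = 0$, so (2) holds; for (1) I take $f_{\oU}(x) = x$, $\Delta(x) = 0$, and $S(x) = R(x) = a_1 x^p + a_0 x$ (which has the required shape since $a_1 \neq 0$), so that the identity $0 = xR(x) + xR(x)$ holds in characteristic $2$.

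For the inductive step with $e \geq 2$, Lemma~\ref{oU} gives $\dim \oU \geq e-1 \geq 1$, so I can pick a nonzero $a \in \oU$ and a lift $(a,b) \in A \subset \F_q^2$. Setting $f(x) \coloneqq x^p + a^{p-1}x$, Lemma~\ref{q0} supplies $P_b \in \mathscr{A}$ of degree $p^{e-1}$ and $\Delta_b(x) \in \F_q[x]$ with
\[
\Delta_b(x)^p + \Delta_b(x) = xR(x) + f(x)\, P_b(f(x)).
\]
Proposition~\ref{KerfA} then yields a maximal abelian subgroup $A' \coloneqq \psi_f(A) \subset H_{P_b}$ with $\psi_f^{-1}(A') = A$, and $A' \subset \F_q^2$ because $\psi_f$ is defined over $\F_q$; to invoke Proposition~\ref{KerfA}~(3) one needs $\Ker \psi_f \subset A$, which holds since $\Ker \psi_f \subset \pi^{-1}(\F_p a) \subset A$.

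The critical bridge to the inductive hypothesis is Lemma~\ref{Kerf}~(3): because $\overline{A} \subset (\Ker f)^\perp$, one has $f_R(c,c) = f_{P_b}(f(c),f(c))$ for every $c \in \overline{A}$, so $f$ restricts to a surjection $\oU \to \oU'$ with kernel $\F_p a$, where $\oU'$ denotes the analogue of $\oU$ for $(P_b, A')$. Applying the inductive hypothesis to $(P_b, A')$ yields $f'_{\oU'}$, $S(x) = s_1 x^p + s_0 x$ with $s_1 \neq 0$, and $\Delta'(z)$ satisfying $\Delta'(z)^p + \Delta'(z) = zP_b(z) + f'_{\oU'}(z)\, S(f'_{\oU'}(z))$, $\Ker f'_{\oU'} = \oU'$, and $\dim \oU' = e-2$. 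Then $f_{\oU}(x) \coloneqq f'_{\oU'}(f(x))$ is a separable $\F_p$-linearized polynomial, and $\Delta(x) \coloneqq \Delta_b(x) + \Delta'(f(x))$ satisfies the required equation by substituting $z = f(x)$ into the second identity and adding to the first. For (2), the composition gives $\dim \Ker f_{\oU} = \dim \oU' + \dim \Ker f = (e-2) + 1 = e-1$; combined with $\oU \subset \Ker f_{\oU}$ (from $f(\oU) \subset \oU'$) and $\dim \oU \geq e-1$ from Lemma~\ref{oU}, this forces $\oU = \Ker f_{\oU}$. The main hurdle is the clean identification $f(\oU) = \oU'$ via Lemma~\ref{Kerf}~(3); once that is in hand, the rest of the argument is essentially formal.
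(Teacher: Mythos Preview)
Your argument is correct and follows essentially the same inductive strategy as the paper: reduce $e$ by one via Lemma~\ref{q0}, push the data through $f$ using Lemma~\ref{Kerf}~(3) and Proposition~\ref{KerfA}, and compose. The only notable difference is in part~(2): the paper deduces $\dim\oU=e-1$ in one line from Lemma~\ref{degcomp} applied to the final identity $\Delta^p+\Delta=xR(x)+f_{\oU}S(f_{\oU})$ (which forces $\deg f_{\oU}=p^{e-1}$), whereas you obtain it by the dimension count $\dim\Ker f_{\oU}=\dim\oU'+1=e-1$ together with $\oU\subset\Ker f_{\oU}$ and the lower bound from Lemma~\ref{oU}. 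Both are valid; the paper's route is a bit shorter but yours makes the equality $\oU=\Ker f_{\oU}$ more transparent, and your explicit verification that $A'=\psi_f(A)\subset\F_q^2$ (needed to invoke the inductive hypothesis) is a point the paper leaves implicit.
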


\begin{proof}
We prove the claims (1) and (2) simultaneously by induction on $e$.
Assume $e=1$. 
By Lemma~\ref{q01}, we have 
$\oU=0$. By taking 
$f_{\oU}(x) =x$, $S(x) = R(x)$, and  $\Delta(x) = 0$, the assertion follows. 

We assume that $e\geq2$. Then, by Lemma~\ref{oU}, there exists a non-zero element $a\in\oU$. Lift $a$ to $(a,b)\in H_R$. Then, by $a \neq 0$, $f_R(a,a)=0$ and Lemma~\ref{q0}, we can find an $\F_p$-linearized polynomial $P_b(x)\in\mathscr{A}$ of degree $p^{e-1}$ and a polynomial $\Delta_b(x)\in \F_q[x]$ such that 
 \[\Delta_b(x)^p+\Delta_b(x)=xR(x)+f(x)P_b(f(x)),\]
where $f(x)=x^p+a^{p-1}x$. Consider the surjection in Proposition~\ref{KerfA}:
\[
f\colon (\Ker f)^\perp \to V_{P_b}.
\]
Since $\overline{A} \subset (\Ker f)^\perp$, we have $f(\overline{A}) \subset V_{P_b}$. 
Moreover, by Proposition~\ref{KerfA}~(1) and $|f(\overline{A})| = p^{e-1}$, 
the image $f(\overline{A})$ is maximal totally isotropic with respect to $\omega_{P_b}$.

Furthermore, the image $f(\overline{U})$ is equal to the kernel $K$ of the homomorphism \[f(\overline{A})\to \F_p,\ x \mapsto f_{P_b}(x,x).\]
Indeed, by Lemma~\ref{Kerf} (3), the composite map 
\[
\overline{A}\xrightarrow{f}f(\overline{A})\to \F_p
\]
is equal to $x\mapsto f_R(x,x)$, which shows that $\overline{U}=f^{-1}(K)$. Since the map $\overline{A}\to f(\overline{A})$ is surjective, it follows that $f(\overline{U})=K$. By the induction hypothesis, we have 
$\dim K=e-2$ and hence $\dim \oU=\dim K+1=e-1$.

By the induction hypothesis, we can write 
\[
\Delta_1(x)^p+\Delta_1(x)=xP_b(x)+g(x)S(g(x)), 
\]
where $S(x)$ is of the form $s_1x^p+s_0x$ with $s_1 \neq 0$, and $\Ker g=f(\oU)$. Then we may take $\Delta(x)=\Delta_1(f(x))+\Delta_b(x)$ and $f_{\oU}(x)=g(f(x))$. Since $f(x)$ and $g(x)$ are separable, so is $f_{\oU}(x)$. 
Since $g$ is separable with $\dim\Ker g=e-2$, we have $\deg g=p^{e-2}$, and hence
$\deg f_{\oU}=p^{e-1}$.
Since $\Ker g=f(\oU)$, we have $\oU \subset \Ker f_{\oU}$. 
Since $\dim \oU=e-1$, we conclude that $\oU=\Ker f_{\oU}$. This completes the proof. 
\end{proof}
The statement of Lemma~\ref{suc} (1) can be strengthened as follows. 
\begin{proposition}\label{choice}
Assume $A\subset \F_q^2$. Let $\oU$ be as in \eqref{oUdef}. 
Then there exists a separable $\F_p$-linearized polynomial 
$f_{\oU}(x)\in\mathscr A$ such that the following hold:
\begin{itemize}
\item $\oU=\Ker f_{\oU}$ and 
$\overline{A}=\Ker (f_{\oU}^p+f_{\oU})$;
\item there exist $S(x)=x^p+s_0x\in\mathscr A$ and a polynomial 
$\Delta(x)\in\F_q[x]$ such that
\begin{equation}\label{del}
\Delta(x)^p+\Delta(x)
=xR(x)+f_{\oU}(x)S(f_{\oU}(x)).
\end{equation}
\end{itemize}
Moreover, the morphism
\begin{equation}\label{rr}
\phi_{R,S}\colon C_R \to C_S,\quad
(x,y)\mapsto (f_{\oU}(x),\,y+\Delta(x))
\end{equation}
is a finite \'etale Galois cover with Galois group $\oU$.
\end{proposition}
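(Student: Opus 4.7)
The plan is to begin from Lemma \ref{suc}, which supplies $f_{\oU}^{(0)}(x)$, $S^{(0)}(x) = s_1 x^p + s_0 x$ (with $s_1 \neq 0$), and $\Delta^{(0)}(x)$, and to normalize this data by rescaling $f_{\oU}^{(0)}$ by a scalar $c \in \F_q^\times$. Concretely, if we set $\tilde f_{\oU} \coloneqq c f_{\oU}^{(0)}$ and $\tilde S(x) \coloneqq s_1 c^{-(p+1)} x^p + s_0 c^{-2} x$, a direct substitution shows $\tilde f_{\oU}(x)\tilde S(\tilde f_{\oU}(x)) = f_{\oU}^{(0)}(x) S^{(0)}(f_{\oU}^{(0)}(x))$, so the identity \eqref{del} continues to hold with the same $\Delta^{(0)}$, and clearly $\Ker \tilde f_{\oU} = \oU$. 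The problem thus reduces to finding $c$ achieving both (i) $s_1 c^{-(p+1)} = 1$, so that $\tilde S$ has the form $x^p + \tilde s_0 x$, and (ii) $\tilde f_{\oU}(\overline{A}) \subset \F_p$, which is equivalent to $\overline{A} \subset \Ker(\tilde f_{\oU}^p + \tilde f_{\oU})$ and upgrades to equality by comparing cardinalities ($p \cdot p^{e-1} = p^e = |\overline{A}|$).

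The key step is the existence of such a $c$. Pick $\tilde a \in \overline{A} \setminus \oU$ (nonempty by Lemma \ref{suc} (2)) and set $\beta \coloneqq f_{\oU}^{(0)}(\tilde a) \in \F_q^\times$. Condition (ii) becomes $c \beta \in \F_p^\times$, i.e., $c = \lambda/\beta$ with $\lambda \in \F_p^\times$; condition (i) then reduces, using $\lambda^p = \lambda$, to $\lambda^2 = s_1 \beta^{p+1}$. The crux is thus $s_1 \beta^{p+1} \in \F_p^\times$. To see this, note that $\overline{A}$ being totally isotropic and containing $\oU = \Ker f_{\oU}^{(0)}$ gives $\overline{A} \subset (\Ker f_{\oU}^{(0)})^\perp$, so by Lemma \ref{Kerf} (3), $\beta \in V_{S^{(0)}}$; lifting to $(\beta, b) \in H_{S^{(0)}}$ and applying Lemma \ref{basic} (4), we get $(\beta, b)^2 = (0, f_{S^{(0)}}(\beta, \beta)) = (0, s_1 \beta^{p+1}) \in Z(H_{S^{(0)}}) = \{0\} \times \F_p$, hence $s_1 \beta^{p+1} \in \F_p$, and it is nonzero because $s_1$ and $\beta$ are. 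A suitable $\lambda$ then exists because squaring is a bijection on $\F_p^\times$ in characteristic $2$ (as $p-1$ is odd).

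The last assertion, that $\phi_{R,S}$ is a finite \'etale Galois covering with group $\oU$, follows by applying Lemma \ref{phif} with $R_1 = R$, $R_2 = S$, and $f = f_{\oU}$: the hypothesis \eqref{das} of that lemma is exactly our \eqref{del}, and the kernel condition $\Ker f_{\oU} = \oU \subset \overline{A} \subset \F_q$ triggers part (2), which gives the Galois statement. The main obstacle in the whole argument is the verification $s_1 \beta^{p+1} \in \F_p^\times$; this is an essentially characteristic-$2$ phenomenon, relying on the order-$4$ property of Heisenberg group elements from Lemma \ref{basic} (4), together with the fact that squaring is bijective on $\F_p^\times$ in characteristic $2$.
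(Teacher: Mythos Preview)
Your proof is correct and follows essentially the same approach as the paper: start from Lemma~\ref{suc} and rescale $f_{\oU}$ by a scalar in $\F_q^\times$ to normalize both $S$ and the image $f_{\oU}(\overline{A})$. The paper does this in two steps (first rescale by $\eta^{-1}$ so that $f_{\oU}(\overline{A})=\F_p$, then observe $E_S(1)=s_1^p+s_1=0$ via Lemma~\ref{Kerf}~(3), and rescale again by $s_1^{1/2}$), whereas you compress this into a single rescaling $c=\lambda/\beta$ and extract the needed fact $s_1\beta^{p+1}\in\F_p$ from the Heisenberg group structure. One small remark: the formula $(\beta,b)^2=(0,s_1\beta^{p+1})$ and its membership in $Z(H_{S^{(0)}})=\{0\}\times\F_p$ follow already from the group law and Lemma~\ref{basic}~(1) (closure of $H_{S^{(0)}}$ forces the second coordinate into $\F_p$); Lemma~\ref{basic}~(4) is not actually needed here.
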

\begin{proof}
By Lemma~\ref{suc}, we can find $f_{\oU}(x)$, $\Delta(x)$, and $S(x)=s_1x^p+s_0x$ satisfying 
\begin{equation}\label{delb}
\Delta(x)^p+\Delta(x)=xR(x)+f_{\oU}(x)S(f_{\oU}(x)).
\end{equation}
Since $\dim \oU=e-1$ by Lemma~\ref{suc} (2), the image $f_{\oU}(\overline{A})$ is one-dimensional. Therefore, we can write 
$f_{\oU}(\overline{A})=\F_p \eta$ with 
$\eta \in \F_q^{\times}$. Then $f_{\oU}(a)^p+\eta^{p-1} f_{\oU}(a)=0$ for any $a \in \overline{A}$.  
Dividing this equality by $\eta^{-p}$ and 
replacing $\eta^{-1}f_{\oU}(x)$ by $f_{\oU}(x)$, we have $f_{\oU}(a)^p+f_{\oU}(a)=0$ for any $a \in \overline{A}$. Since $f_{\oU}(x)^p+f_{\oU}(x)$ has degree $p^e$ and $|\overline{A}|=p^e$, 
we obtain $\overline{A}=\Ker (f_{\oU}^p+f_{\oU})$.  If we replace 
 $S(x)$ by $\eta S(\eta x)$, the equality 
\eqref{delb} is preserved. 

Write $S(x)= s_1 x^p + s_0 x$. By Lemma~\ref{Kerf} (3), the image 
$f_{\oU}(\overline{A})=\F_p$ is contained in $V_S$. 
In particular, $1 \in V_S$. 
Since we have $E_{S}(x)=s_1^p x^{p^2}+s_1x$, this implies that  
 $s_1^p+s_1=E_{S}(1)=0$. Thus we have $s_1 \in \F_p$. 
We set $S_1(x) \coloneqq x^p + s_0 s_1^{-1} x$. Since $s_1^{1/2}\in\F_p$,  we have 
$s_1^{1/2} x S_1(s_1^{1/2} x) = x S(x)$. Consequently, by replacing $S(x)$ with $S_1(x)$
and $f_{\oU}(x)$ with $s_1^{1/2}f_{\oU}(x)$, we may normalize so that $s_1=1$.

The last claim follows from Lemma~\ref{phif} by applying it with $(R_1,R_2, f)=(R,S,f_{\oU})$. 
\end{proof}

\begin{lemma}\label{kk}
Let the notation and assumptions be as in Proposition~\ref{choice}. 
\begin{itemize}
    \item[{\rm (1)}] There exists $\a \in \F_q$ such that 
$\alpha^{p^{-1}} + \alpha = s_0 + 1$. 
\item[{\rm (2)}] Let $A_S$ denote the inverse image $\pi_S^{-1}(\F_p)$ of $\F_p\subset \F_{p^2}$ via the projection $\pi_S\colon H_S\to V_S=\F_{p^2},\ (a,b) \mapsto a$. Then 
the upper horizontal line in \eqref{exHf} restricts to a split short exact sequence of abelian groups 
\[0\to \oU\to A\to A_S\to 0.\] 
\end{itemize}
\end{lemma}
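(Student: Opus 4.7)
For part (1), I plan first to observe that the group homomorphism $\psi_f\colon I_f\to H_S$ attached in Proposition \ref{KerfA} (2) to the decomposition $(f_{\oU},S,\Delta)$ of Proposition \ref{choice} is defined over $\F_q$, as its defining formula $\psi_f(a,b)=(f_{\oU}(a),\,b+\Delta(a))$ has coefficients in $\F_q$. The assumption $A\subset\F_q^2$ therefore yields $\psi_f(A)\subset \F_q^2$. Moreover, for any $a\in\overline{A}=\Ker(f_{\oU}^p+f_{\oU})$ one has $f_{\oU}(a)^p+f_{\oU}(a)=0$, hence $f_{\oU}(a)\in\F_p$, so $\psi_f(A)\subset\pi_S^{-1}(\F_p)=A_S$. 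A cardinality count then shows $\psi_f(A)=A_S$: by Lemma \ref{suc} (2), $|\oU|=p^{e-1}$, hence $|\Ker\psi_f\cap A|=p^{e-1}$, giving $|\psi_f(A)|=|A|/p^{e-1}=p^{e+1}/p^{e-1}=p^2=|A_S|$. In particular $A_S\subset\F_q^2$. Finally, picking $\eta\in\F$ with $\eta^p+\eta=S(1)=1+s_0$ yields $(1,\eta)\in A_S\subset\F_q^2$, so $\eta\in\F_q$, and $\alpha\coloneqq \eta^p\in\F_q$ satisfies $\alpha^{p^{-1}}+\alpha=\eta+\eta^p=s_0+1$.

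For part (2), I would first verify the inclusion $A\subset I_f$ needed to restrict the top row of \eqref{exHf}: since $\overline{A}$ is totally isotropic with respect to $\omega_R$ and contains $\oU=\Ker f_{\oU}$, one has $\overline{A}\subset (\Ker f_{\oU})^\perp$, whence $A=\pi_R^{-1}(\overline{A})\subset I_f$. Combining the isomorphism $\Ker\psi_f\xrightarrow{\sim}\Ker f_{\oU}=\oU$ from Proposition \ref{KerfA} (2) with the fact that $\Ker\psi_f\subset A$ (every element is of the form $(a,\Delta(a))$ with $a\in\oU\subset\overline{A}$) and with the surjectivity $\psi_f|_A\twoheadrightarrow A_S$ from part (1) produces the desired short exact sequence $0\to\oU\to A\to A_S\to 0$.

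To obtain the splitting, I would use the $\Z/4\Z$-module structure. By Lemma \ref{basic} (4), the abelian group $A$ is killed by $4$, so it is a module over $\Z/4\Z=W_2(\F_2)$, and the surjection $A\twoheadrightarrow A_S$ is automatically $\Z/4\Z$-linear. By Lemma \ref{maximalabeliansubgroupwitt} one has $A_S\simeq W_2(\F_p)$, and by Example \ref{wex} the ring $W_2(\F_p)$ is isomorphic to $\mathcal{O}_K/4\mathcal{O}_K$ for an unramified extension $K/\Q_2$ of degree $[\F_p:\F_2]$, which as an abelian group is $(\Z/4\Z)^{[\F_p:\F_2]}$. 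Thus $A_S$ is a free $\Z/4\Z$-module, hence projective, and the surjection admits a $\Z/4\Z$-linear section. This is the desired splitting. The principal subtlety throughout is the identification $\psi_f(A)=A_S$ as subgroups of $H_S$ (not merely containment), on which both parts of the proof rely.
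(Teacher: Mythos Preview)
Your proposal is correct and follows essentially the same approach as the paper; the main difference is organizational. You front-load the identification $\psi_f(A)=A_S$ (via a cardinality count) and use it for both parts, deducing $A_S\subset\F_q^2$ and then applying the argument of Lemma~\ref{lemmaalpha}; the paper instead handles part~(1) directly by picking $(a,b)\in A$ with $f_{\oU}(a)=1$ and computing $(\Delta(a)+b)^p+(\Delta(a)+b)=s_0+1$, which unwinds to exactly your element $\eta$ since $\psi_f(a,b)=(1,\,b+\Delta(a))$. For part~(2) the paper obtains $\psi_{f_{\oU}}(A)=\pi_S^{-1}(f_{\oU}(\overline{A}))=\pi_S^{-1}(\F_p)=A_S$ from the commutative diagram~\eqref{exHf} rather than by counting, and the splitting argument via $\Z/4\Z$-freeness of $A_S\simeq W_2(\F_p)$ is identical. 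One minor point of presentation: you invoke $\psi_f(A)$ and the equality $|\Ker\psi_f\cap A|=p^{e-1}$ in part~(1) before verifying $A\subset I_f$ and $\Ker\psi_f\subset A$, which you only justify in part~(2); you should move those verifications earlier.
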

\begin{proof}
(1) 
The first condition in Proposition~\ref{choice} implies a surjection $\overline{A} \to \F_p,\ 
x \mapsto f_{\oU}(x)$. 
Thus, we can take $a \in \overline{A}$ 
such that $f_{\oU}(a)=1$.  
Take a lift $(a,b) \in A$ of $a$. 
Then we have 
\[
\Delta(a)^p+\Delta(a)=a R(a)+
S(1)=b^p+b+s_0+1, 
\]
and hence $(\Delta(a)+b)^p+(\Delta(a)+b) = s_0+1$. 
Since $A \subset \F_q^2$ and $\Delta(x) \in \F_q[x]$, we have $\Delta(a)+b \in \F_q$, 
so we may take $\alpha =(\Delta(a)+b)^p$.

(2) By \eqref{del} and Proposition~\ref{KerfA} (2), we have an exact sequence 
\[1\to \oU\to I_{f_{\oU}}\xrightarrow{\psi_{f_{\oU}}} H_S\to 1,\]
where the map $\oU\to I_{f_{\oU}}$ is given by $a\mapsto (a,\Delta(a))$. This restricts to an exact sequence 
\[0\to \oU\to A\to \psi_{f_{\oU}}(A)\to 0. 
\]
Since $A=\pi^{-1}(\overline{A})$ and 
$\Ker f_{\oU}=\oU \subset \overline{A}$, 
Proposition~\ref{KerfA} (2) implies that 
\[\psi_{f_{\oU}}(A)=\psi_{f_{\oU}}(\pi^{-1}(\overline{A}))=\pi_S^{-1}(f_{\oU}(\overline{A}))=\pi_S^{-1}(\F_p)=A_S. 
\]
 Therefore, we obtain an exact sequence in (2). It splits, since $A$ is a $\Z/4\Z$-module by Lemma~\ref{basic} (4), and
$A_S$ is isomorphic to $W_2(\F_p)$ by Lemma~\ref{maximalabeliansubgroupwitt},
which is $\Z/4\Z$-free by Lemma~\ref{free}.
\end{proof}

By Proposition~\ref{KerfA} (2), the exact sequence in Lemma~\ref{kk} (2) fits into a commutative diagram of exact sequences 
\begin{equation}\label{exRS}
    \xymatrix{
0\ar[r]&\oU\ar[d]^-{{\rm id}_{\oU}}\ar[r]&A\ar[d]^-{\pi}\ar[r]&A_S\ar[d]^-{\pi_S}\ar[r]&0 \\
0\ar[r]&\oU\ar[r]^-{i}&\overline{A}\ar[r]^-{f_{\oU}}& \F_p\ar[r]&0, 
}
\end{equation}
Let 
\begin{equation}\label{retAU}
      r \colon \overline{A}\to \oU
\end{equation}
be an $\F_p$-linear map satisfying $r \circ i={\rm id}_{\oU}$. 
Fix $\alpha\in \F_q$ as in Lemma~\ref{kk} (1). 
Then the homomorphisms $ r \circ \pi \colon A\to \oU$ 
and $f_{\alpha^{p^{-1}}}$ in \eqref{aw} 
together induce an isomorphism 
\begin{equation}\label{AUW}
   A\xrightarrow{\sim}  W_2(\F_p)\times \oU.  
\end{equation}

\section{Proof of the main theorem}
\label{Section:MainTheorem}

In this section, we calculate the Frobenius eigenvalues
and the $L$-polynomials of van der Geer--van der Vlugt curves
associated with more general $\F_p$-linearized polynomials.

\subsection{A lemma on smooth $\overline{\Q}_{\ell}$-sheaves}

We use the following elementary fact on smooth $\overline{\Q}_{\ell}$-sheaves. 
Let $k$ be a field. Let $H$ be a finite group acting freely on an affine variety $X$ over $k$. Then the natural map $X\to X/H$ is a finite \'etale Galois covering with Galois group $H$. For an $H$-representation $\xi$ on a finite-dimensional $\overline{\Q}_\ell$-vector space, we write $\mathcal{L}_\xi$ for the smooth $\overline{\Q}_\ell$-sheaf on $X/H$ defined by the finite \'etale $H$-covering $X\to X/H$ and $\xi$. 

For representations $\xi_i$ of groups $G_i$ ($i = 1, 2$),
we write $\xi_1 \boxtimes \xi_2$ for the external tensor product representation of $G_1 \times G_2$ associated to $\xi_1$ and $\xi_2$.

\begin{lemma}\label{ele}
Let $X$ be an affine variety over a field $k$. 
Let $G_1$ and $G_2$ be finite groups and let $G \coloneqq G_1 \times G_2$.
Assume that $G$ acts on $X$ freely. 
For $i=1,2$, 
let $\xi_i$ be a finite-dimensional irreducible representation of $G_i$ over $\overline{\Q}_{\ell}$. 
Let $\mathcal{L}_{\xi_1}$, $\mathcal{L}_{\xi_2}$, and $\mathcal{L}_{\xi_1\boxtimes\xi_2}$  be  the $\overline{\Q}_{\ell}$-sheaves  
on $X/G$ defined by the coverings $X/G_2 \to X/G$, $X/G_1\to X/G$, and $X\to X/G$, respectively. 
Then we have an isomorphism of $\overline{\Q}_{\ell}$-sheaves: 
\[
\mathcal{L}_{\xi_1 \boxtimes \xi_2}
\simeq \mathcal{L}_{\xi_1} \otimes \mathcal{L}_{\xi_2}.
\]
\end{lemma}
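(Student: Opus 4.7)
The plan is to decompose the external tensor product $\xi_1\boxtimes \xi_2$ as a tensor product of two $G$-representations each of which factors through one of the projections $G\to G_i$, and then use the general compatibility of the associated-sheaf construction with tensor products and with factorization of covers.

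First I would note the isomorphism of $G$-representations
\[
  \xi_1\boxtimes \xi_2\simeq (\xi_1\boxtimes \mathbf{1})\otimes (\mathbf{1}\boxtimes \xi_2),
\]
where $\mathbf{1}$ denotes the trivial representation. Since $X\to X/G$ is a finite \'etale Galois $G$-covering, pull-back along $X\to X/G$ gives an equivalence between the category of smooth $\overline{\Q}_\ell$-sheaves on $X/G$ and the category of continuous $G$-representations on finite-dimensional $\overline{\Q}_\ell$-vector spaces (after trivializing). Under this equivalence, tensor products of representations correspond to tensor products of sheaves, so
\[
  \mathcal{L}_{\xi_1\boxtimes \xi_2}\simeq \mathcal{L}_{\xi_1\boxtimes \mathbf{1}}\otimes \mathcal{L}_{\mathbf{1}\boxtimes \xi_2},
\]
where the two factors on the right are the sheaves on $X/G$ associated with the $G$-covering $X\to X/G$ and the representations $\xi_1\boxtimes\mathbf{1}$ and $\mathbf{1}\boxtimes\xi_2$, respectively.

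Next I would identify these two factors with $\mathcal{L}_{\xi_1}$ and $\mathcal{L}_{\xi_2}$. The action of $G$ on $X$ being free, the quotient map $X\to X/G$ factors as $X\to X/G_2\to X/G$, where $X\to X/G_2$ is a $G_2$-Galois covering and $X/G_2\to X/G$ is a $G_1$-Galois covering. The representation $\xi_1\boxtimes\mathbf{1}$ factors through the projection $G\to G_1$, i.e., $G_2$ acts trivially. By the general fact that if $H\subset G$ acts trivially on a representation $V$, then the associated sheaf on $X/G$ coming from $X\to X/G$ descends to (and coincides with) the one coming from $X/H\to X/G$ equipped with the induced $G/H$-action, we obtain
\[
  \mathcal{L}_{\xi_1\boxtimes \mathbf{1}}\simeq \mathcal{L}_{\xi_1},
\]
and symmetrically $\mathcal{L}_{\mathbf{1}\boxtimes \xi_2}\simeq \mathcal{L}_{\xi_2}$. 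Combining the two displayed isomorphisms yields the claim.

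The only step that requires care is the descent statement in the last paragraph, i.e., the compatibility of the associated-sheaf construction with an intermediate quotient. This is a standard fact: given a factorization $X\to Y\to Z$ of finite \'etale Galois coverings with Galois groups $H\subset G$, and a $G$-representation $V$ on which $H$ acts trivially, both the direct construction using $X\to Z$ and the construction via the $G/H$-covering $Y\to Z$ produce the same local system, because the contracted product $X\times^G V$ identifies naturally with $Y\times^{G/H} V$ when $H$ acts trivially on $V$. I do not expect this to pose a real obstacle, and the remaining steps are purely formal manipulations with the equivalence of categories between smooth $\overline{\Q}_\ell$-sheaves on $X/G$ and $\overline{\Q}_\ell$-representations of $G$.
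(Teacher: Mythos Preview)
Your argument is correct, and it takes a genuinely different route from the paper's. You work on the representation side: decompose $\xi_1\boxtimes\xi_2$ as $(\xi_1\boxtimes\mathbf{1})\otimes(\mathbf{1}\boxtimes\xi_2)$, invoke the fact that the associated-sheaf functor $\rho\mapsto\mathcal{L}_\rho$ is a tensor functor, and then identify each factor with $\mathcal{L}_{\xi_i}$ via the intermediate-quotient compatibility. The paper instead works on the sheaf side: it sets up the cartesian square of quotient maps, applies proper base change to identify the pullback of $\mathcal{L}_{\xi_1}$ to $X/G_1$ with the sheaf coming from $X\to X/G_1$, then uses the projection formula to decompose $p_{1*}\mathcal{L}'_{\xi_1}$ and extracts the $\xi_1\boxtimes\xi_2$-isotypic piece from both sides. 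Your approach is more conceptual and slightly shorter, relying on two clean functorial facts; the paper's approach is more hands-on, avoiding any appeal to the tensor-functor property by computing directly with pushforwards and isotypic decompositions. One small comment: your sentence about ``an equivalence between the category of smooth $\overline{\Q}_\ell$-sheaves on $X/G$ and \ldots\ $G$-representations'' overstates things (only sheaves trivialized by $X\to X/G$ arise this way), but all you actually use is that $\rho\mapsto\mathcal{L}_\rho$ respects tensor products, which is true and suffices.
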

\begin{proof}
We consider the following cartesian diagram 
\[
\xymatrix{
X \ar[r]^-{p_2}\ar[d]^{\pi_1} & X/G_2 \ar[d]^{\pi_2}\\
X/G_1 \ar[r]^-{p_1} & X/G. 
}
\]
Let $\mathcal{L}'_{\xi_i}$ denote the $\overline{\Q}_{\ell}$-sheaf on 
$X/G_i$ defined by $\xi_i$ and 
the finite \'etale $G_i$-covering 
$X \to X/G_i$. 
Let $\mathrm{Irr}(G_i)$ denote the set of 
isomorphism classes of irreducible representations of $G_i$ over $\overline{\Q}_{\ell}$. 
By the proper base change theorem, we have 
\[
\bigoplus_{\xi' \in \mathrm{Irr}(G_1)} 
\mathcal{L}'_{\xi'} \simeq {\pi_1}_{\ast} \overline{\Q}_{\ell}
\simeq {\pi_1}_{\ast}p_2^\ast \overline{\Q}_{\ell} \simeq p_1^\ast {\pi_2}_{\ast} \overline{\Q}_{\ell}\simeq 
\bigoplus_{\xi' \in \mathrm{Irr}(G_1)} 
p_1^\ast \mathcal{L}_{\xi'}. 
\] 
By taking the $\xi_1$-isotypic component, 
we obtain $\mathcal{L}'_{\xi_1} \simeq p_1^\ast \mathcal{L}_{\xi_1}$. 
Applying the projection formula, we find: 
\[
\bigoplus_{\xi'_2 \in 
\mathrm{Irr}(G_2)} \mathcal{L}_{\xi_1 \boxtimes \xi'_2}
\simeq p_{1\ast} \mathcal{L}'_{\xi_1}
\simeq p_{1\ast}p_1^{\ast} \mathcal{L}_{\xi_1} \\
 \simeq \mathcal{L}_{\xi_1} \otimes p_{1\ast}
\overline{\Q}_{\ell} \simeq 
\bigoplus_{\xi'_2 \in \mathrm{Irr}(G_2)} \mathcal{L}_{\xi_1} \otimes \mathcal{L}_{\xi'_2}. 
\]
Taking the $\xi_1 \boxtimes \xi_2$-isotypic component on both sides, 
we obtain the claim. 
\end{proof}

\subsection{Calculation of the Frobenius eigenvalues}

We use the same notation as in Sections \ref{Section:Geometry} 
and \ref{Section:Quotients}.

Let $R(x) \in \mathscr{A}$ be an $\F_p$-linearized polynomial of degree $p^e$ with $e \ge 1$.
Let $A \subset H_R$ be a maximal abelian subgroup. We assume $A \subset \F_q^2$.

As in Proposition~\ref{choice}, we take a separable $\F_p$-linearized polynomial $f_{\oU}(x) \in \mathscr{A}$. Then $F_A(x) \coloneqq f_{\oU}(x)^p+f_{\oU}(x)$ satisfies Lemma~\ref{F_A}. 

\begin{lemma}\label{aF=Fa}
There exists an $\F_p$-linearized polynomial $a(x)\in\mathscr{A}$ such that
\[
x^q+x = a(F_A(x)) = F_A(a(x)).
\]
The polynomial $a(x)$ induces a surjective map
$\F_q \to \overline{A},\ x \mapsto a(x)$.
\end{lemma}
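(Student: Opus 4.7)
The plan is to work inside the noncommutative ring $\mathscr{A}$ of $\F_p$-linearized polynomials over $\F_q$, equipped with composition as multiplication. The key observation---and the crucial step of the whole argument---is that $x^q+x$ lies in the center of $\mathscr{A}$: this is verified on a generating set, via $(x^q+x)\circ(cx) = cx^q+cx = (cx)\circ(x^q+x)$ for $c\in\F_q$ (using $c^q=c$), and $(x^q+x)\circ x^p = (x^q+x)^p = x^p\circ(x^q+x)$.

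Using that $\mathscr{A}$ is a right Euclidean domain (it is a twisted polynomial ring over the perfect field $\F_q$), I would first write $x^q+x = a\circ F_A + r$ with $a,r\in\mathscr{A}$ and $\deg r < p^e = \deg F_A$. Evaluating at any $\xi\in\overline{A}=\Ker F_A$: since $\overline{A}\subset\F_q$ and $\xi^q+\xi = 2\xi = 0$ in characteristic two, we get $r(\xi)=0$. Thus $r$ has at least $p^e = |\overline{A}|$ roots while of degree strictly less than $p^e$, forcing $r=0$; hence $a\circ F_A = x^q+x$. The centrality of $x^q+x$ then promotes this to a two-sided factorization:
\[
(F_A\circ a)\circ F_A \,=\, F_A\circ (x^q+x) \,=\, (x^q+x)\circ F_A \,=\, a\circ F_A\circ F_A,
\]
and right cancellation of the nonzero element $F_A$ in the integral domain $\mathscr{A}$ yields $F_A\circ a = a\circ F_A = x^q+x$, as required. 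The separability of $F_A$ (needed for the lemma's statement) is automatic, since differentiating $a\circ F_A = x^q+x$ gives $a'(F_A(x))\cdot F_A'(x) = 1$, so both $a'$ and $F_A'$ are nonzero constants.

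For the surjectivity of $a|_{\F_q}\colon\F_q\to\overline{A}$, I would use that the separable polynomial $a$ induces an $\F_p$-linear endomorphism of $\F$ of ordinary degree $q/p^e$, so $|\Ker a| = q/p^e$. The identity $F_A\circ a = x^q+x$ forces $\Ker a\subset\Ker(x^q+x) = \F_q$ and $a(\F_q)\subset\Ker F_A = \overline{A}$; counting cardinalities then yields $|a(\F_q)| = q/|\Ker a| = p^e = |\overline{A}|$, and surjectivity follows. The only real obstacle is spotting the centrality of $x^q+x$; once that is in hand, converting the easy one-sided Euclidean factorization into the required two-sided identity is immediate, and the surjectivity reduces to a counting argument.
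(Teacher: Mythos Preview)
Your proof is correct and follows essentially the same route as the paper's: obtain the one-sided factorization $x^q+x = a\circ F_A$ from $\overline{A}\subset\F_q$, then upgrade to the two-sided identity using that $\mathscr{A}$ has no zero divisors. The only cosmetic differences are that the paper substitutes $x\mapsto a(x)$ and cancels $a$ on the left (rather than invoking centrality of $x^q+x$ and cancelling $F_A$ on the right), and for surjectivity it lifts an arbitrary $y\in\overline{A}$ along $a$ and checks the preimage lies in $\F_q$, instead of your cardinality count.
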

\begin{proof}
Since  $\overline{A} \subset \F_q$ and $F_A(x)$ is 
separable, there exists a polynomial $a(x) \in \mathscr{A} \setminus \{0\}$ such that $x^q+x=a(F_A(x))$. By replacing $x$ with $a(x)$, we obtain 
\[a(x)^q+a(x)=a(F_A(a(x))).\]
Since all coefficients of $a(x)$ lie in $\F_q$ and $a(x)$ is additive, we have
\[
a(x^q + x - F_A(a(x))) = 0.
\]
Because $a(x)$ is nonzero and $x^q + x - F_A(a(x)) \in \mathscr{A}$, 
it follows that
$x^q + x = F_A(a(x)).$

For $x\in \F_q$, we have $F_A(a(x))=x^q+x=0$, which shows $a(\F_q)\subset \overline{A}$. On the other hand, for any $y\in \overline{A}$, there exists $z\in\F$ such that $a(z)=y$. Then 
$z \in \F_q$ since 
\[z^q+z=F_A(a(z))=F_A(y)=0,\]
which shows that the map $\F_q\to\overline{A},\ x \mapsto a(x)$ is surjective. 
\end{proof}
Fix $\alpha\in \F_q$ as in Lemma~\ref{kk} (1) and $r\colon \overline{A}\to \oU$ as in \eqref{retAU}. These induce an isomorphism 
 $\varphi \colon A \xrightarrow{\sim} W_2(\mathbb{F}_p) \times 
\oU$ as in \eqref{AUW}.  Let $\psi\in \F_p^\vee \backslash \{ 1 \}$ be a nontrivial character, and take $\xi \in A_{\psi}^{\vee}$. Using the isomorphism $\varphi$, we decompose $\xi=\xi_{W_2}\boxtimes\xi_{\oU}$, where $\xi_{W_2}\in W_2(\F_p)^\vee$ and $\xi_{\oU}\in \oU^\vee$. 
We consider 
the character 
\[
\F_q \xrightarrow{a} \overline{A} \xrightarrow{r} \oU \xrightarrow{\xi_{\oU}}
\overline{\Q}_{\ell}^{\times}. 
\]
Let $\beta_{\xi} \in \F_q$ denote the 
element satisfying 
\begin{equation}\label{bxi}
\xi_{\oU} \circ r
\circ a(x)=\psi \circ \Tr_{q/p}(\beta_{\xi} x) \quad 
\textrm{for any $x \in \F_q$}. 
\end{equation}
We identify $W_2(\F_p)$ with a subgroup of 
$A$ via the 
isomorphism $\varphi \colon A \xrightarrow{\sim}
W_2(\F_p) \times \oU$. 
Then taking quotients induces a  
finite \'etale Galois covering 
\[
\pi_{\oU} \colon 
C_R/W_2(\F_p) \to C_R/A \simeq 
\A^1_{\F_q}
\]
with 
Galois group $\oU$. Let $\mathcal{L}_{\xi_{\oU}}$
denote the $\overline{\Q}_{\ell}$-sheaf on $\A^1_{\F_q}$
defined by the covering $\pi_{\oU}$ and $\xi_{\oU}$.

\begin{lemma}\label{sps}
We have $\mathcal{L}_{\xi_{\oU}}
\simeq \mathcal{L}_{\psi}(\beta_{\xi} s)$. 
\end{lemma}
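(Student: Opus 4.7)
The plan is to identify these two rank-$1$ smooth $\overline{\Q}_\ell$-sheaves by descending $\mathcal{L}_{\xi_{\oU}}$ through a chain of intermediate covers and eventually relating it to the Lang torsor $L\colon \mathbb{A}^1_{\F_q}\to \mathbb{A}^1_{\F_q}$, $\tilde{s}\mapsto \tilde{s}^q+\tilde{s}$. First, by Lemma \ref{ele} applied to the splitting \eqref{AUW}, the sheaf $\mathcal{L}_{\xi_{\oU}}$ coincides with the sheaf $\mathcal{L}_{1\boxtimes\xi_{\oU}}$ attached to the $A$-cover $F_A\circ \phi\colon C_R\to \mathbb{A}^1_{\F_q}$ and the character $1\boxtimes\xi_{\oU}\in A^\vee$.

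Next, I would observe that $1\boxtimes\xi_{\oU}$ factors through the surjection $\pi_1\colon A\to \overline{A}$. Indeed, by the construction of the splitting \eqref{AUW}, the center $Z(H_R)=\F_p$ lies inside the $W_2(\F_p)$-factor (where $1\boxtimes\xi_{\oU}$ is trivial), and the induced quotient isomorphism $\overline{A}=A/Z(H_R)\simeq \F_p\times \oU$ identifies $r\colon \overline{A}\to \oU$ with the second projection while identifying the image of $W_2(\F_p)\subset A$ in $\overline{A}$ with $\Ker r$. Consequently, the factored character on $\overline{A}$ is $\xi_{\oU}\circ r$, and $\mathcal{L}_{\xi_{\oU}}$ equals the sheaf on $\mathbb{A}^1_{\F_q}$ attached to the intermediate $\overline{A}$-cover $F_A\colon \mathbb{A}^1_{\F_q}\to \mathbb{A}^1_{\F_q}$ and the character $\xi_{\oU}\circ r$.

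By Lemma \ref{aF=Fa}, the Lang torsor satisfies $L=a\circ F_A$, so $a\colon \F_q\twoheadrightarrow \overline{A}$ is the induced map on Galois groups. Hence the sheaf attached to $F_A$ and $\xi_{\oU}\circ r$ coincides with the one attached to $L$ and $\xi_{\oU}\circ r\circ a$, which equals $\psi_q(\beta_\xi\,\cdot)$ by the defining relation \eqref{bxi}. Finally, the Lang--Artin--Schreier dictionary identifies, for every $\gamma\in\F_q$, the sheaf attached to $L$ and $\psi_q(\gamma\,\cdot)$ with $\mathcal{L}_\psi(\gamma s)$; this is a consequence of the identity $T_\gamma^p+T_\gamma=\gamma L(\tilde{s})$ for the trace polynomial $T_\gamma(\tilde{s})=\sum_{i=0}^{n-1}\gamma^{p^i}\tilde{s}^{p^i}$ (with $q=p^n$), which produces a Galois-equivariant commutative square with the Artin--Schreier cover. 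Specializing $\gamma=\beta_\xi$ yields the desired isomorphism.

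The main bookkeeping point is the compatibility in the second step between the splitting \eqref{AUW}, the surjection $\pi_1$, and the retraction $r$; once this is in place, the remainder is a routine chain of Galois-theoretic reductions to the standard Lang-torsor formalism.
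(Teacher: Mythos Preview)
Your argument is correct and follows essentially the same route as the paper's proof: both reduce $\mathcal{L}_{\xi_{\oU}}$ to the sheaf attached to the $\overline{A}$-cover $F_A$ with character $\xi_{\oU}\circ r$, then use Lemma~\ref{aF=Fa} to pass to the $\F_q$-Lang torsor and invoke \eqref{bxi}. The only cosmetic difference is that you first lift to the full $A$-cover via Lemma~\ref{ele} before descending to $F_A$, whereas the paper works directly with the intermediate tower $C_R/Z(H_R)\to C_R/W_2(\F_p)\to C_R/A$; and one small slip: for your Galois-group conclusion you need the factorization $L=F_A\circ a$ (first $a$, then $F_A$), not $L=a\circ F_A$, though both identities hold by Lemma~\ref{aF=Fa}.
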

\begin{proof}
We recall that the quotient morphism $C_R \to C_R/A$ is identified with the composite map 
\[
C_R \xrightarrow{(x,y) \mapsto x} \A_{\F_q}^1 \xrightarrow{F_A} \A_{\F_q}^1, 
\]
where the first morphism corresponds to  
$C_R \to C_R/Z(H_R)$. Let $\pi \colon A\to\overline{A}$ be the projection. 
Since $r \circ \pi$ is trivial on the subgroup $Z(H_R)\subset A$, it follows that 
$Z(H_R) $ is contained in the subgroup $W_2(\F_p) \subset A$ and $W_2(\F_p)/Z(H_R)=\Ker r$. 
Thus $F_A \colon \A_{\F_q}^1 \to \A_{\F_q}^1$ factors as 
\[
\A_{\F_q}^1 \simeq C_R/Z(H_R) \to 
C_R/W_2(\F_p) \xrightarrow{\pi_{\oU}} 
C_R/A\simeq \A^1_{\F_q}. 
\]
Therefore, by Lemma~\ref{aF=Fa}, we have a commutative diagram 
\[
\xymatrix{\A^1_{\F_q}\ar[d]_-a\ar[rrd]^-{x^q+x}&&\\
\A_{\F_q}^1 \ar[r]\ar@/_25pt/[rr]^{F_A} &
C_R/W_2(\F_p) \ar[r]_-{\pi_{\oU}}&
 \A^1_{\F_q}. 
}
\]
Then the assertion follows from \eqref{bxi} since the smooth sheaf on $\A^1_{\F_q}$ induced from the covering $x^q+x=s$ and the character $\F_q \ni s \mapsto \psi(\Tr_{q/p}(\beta_{\xi}s))$ is equal to $\mathcal{L}_{\psi}(\beta_{\xi}s)$.
\end{proof}

\begin{proposition}\label{qpp}
We have an isomorphism of $\overline{\Q}_{\ell}$-sheaves on
$\A_{\F_q}^1$:
\[ \mathcal{Q}_{\xi} \simeq \mathcal{L}_{\xi_{W_2}}(s, \a s^2 + \beta_{\xi} s). \] 
\end{proposition}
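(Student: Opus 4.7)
The plan is to apply the tensor-product decomposition of Lemma~\ref{ele} to the free action of $A \simeq W_2(\F_p) \times \oU$ on $C_R$ (via the splitting \eqref{AUW}), and then identify each of the two resulting factors using results already at our disposal. Taking $G_1 = W_2(\F_p)$ and $G_2 = \oU$ in Lemma~\ref{ele}, applied to the $A$-covering $F_A \circ \phi \colon C_R \to \A^1_{\F_q}$ and the character $\xi = \xi_{W_2} \boxtimes \xi_{\oU}$, yields
\[
\mathcal{Q}_{\xi} \simeq \mathcal{F}_1 \otimes \mathcal{F}_2,
\]
where $\mathcal{F}_1$ is the sheaf on $\A^1_{\F_q}$ associated with the $W_2(\F_p)$-covering $C_R/\oU \to \A^1_{\F_q}$ and the character $\xi_{W_2}$, and $\mathcal{F}_2$ is the sheaf associated with the $\oU$-covering $C_R/W_2(\F_p) \to \A^1_{\F_q}$ and $\xi_{\oU}$.

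For $\mathcal{F}_1$, Proposition~\ref{choice} identifies $C_R/\oU$ with $C_S$ through $\phi_{R,S}$, and since $F_A = f_{\oU}^p + f_{\oU}$, a direct check shows that the induced map $C_S \to \A^1_{\F_q}$ is $\nu_S$ from \eqref{pilf}. The identification of the $W_2(\F_p)$-factor of $A$ with $A_S \subset H_S$ coming from \eqref{AUW} coincides by construction with the isomorphism $f_{\alpha^{p^{-1}}}$ used in Section~\ref{Section:Geometry}, since both are built from the same $\alpha$ of Lemma~\ref{kk}(1). Hence Lemma~\ref{QL} applied to $C_S$ gives
\[
\mathcal{F}_1 \simeq \mathcal{L}_{\xi_{W_2}}(s, \a s^2).
\]

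For $\mathcal{F}_2$, Lemma~\ref{sps} gives $\mathcal{F}_2 \simeq \mathcal{L}_{\psi}(\beta_{\xi} s)$, and it remains to match this with $\mathcal{L}_{\xi_{W_2}}(0, \beta_{\xi} s)$. The subgroup $Z(H_R) \subset A$ corresponds under \eqref{AUW} to $\{0\} \times \F_p \subset W_2(\F_p)$, so $\xi_{W_2}|_{\{0\} \times \F_p} = \xi|_{Z(H_R)} = \psi$. Using \eqref{coo-1} together with the identity $x^{p+1} + x^2 = 0$ for $x \in \F_p$, the pullback of the Lang torsor along $s \mapsto (0, \beta_{\xi} s)$ is the disjoint union over $x \in \F_p$ of the Artin--Schreier coverings $y^p + y = \beta_{\xi} s$; the abelian group $W_2(\F_p)$ permutes these components transitively through its quotient $\F_p$, with stabilizer of each component equal to $\{0\} \times \F_p$ acting by Artin--Schreier translation. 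Since $W_2(\F_p)$ is abelian and the torsor is induced from the $\{0\} \times \F_p$-torsor on a single component, the $\xi_{W_2}$-isotypic piece of the pushforward is $\mathcal{L}_{\xi_{W_2}|_{\{0\} \times \F_p}}(\beta_{\xi} s) = \mathcal{L}_{\psi}(\beta_{\xi} s)$.

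To conclude, I would invoke the multiplicativity of the character sheaf $\mathcal{L}_{\xi_{W_2}}$: since the Lang torsor is compatible with the Witt addition $m \colon W_{2,\F_p} \times W_{2,\F_p} \to W_{2,\F_p}$, one has $m^{\ast} \mathcal{L}_{\xi_{W_2}} \simeq \mathcal{L}_{\xi_{W_2}} \boxtimes \mathcal{L}_{\xi_{W_2}}$, hence
\[
\mathcal{L}_{\xi_{W_2}}(s, \a s^2) \otimes \mathcal{L}_{\xi_{W_2}}(0, \beta_{\xi} s) \simeq \mathcal{L}_{\xi_{W_2}}\bigl((s, \a s^2) + (0, \beta_{\xi} s)\bigr) = \mathcal{L}_{\xi_{W_2}}(s, \a s^2 + \beta_{\xi} s),
\]
where the Witt sum is computed by $(a,b) + (c,d) = (a+c, b+d+ac)$. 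The main obstacle is the careful bookkeeping in the middle two steps: verifying that the two identifications of $A_S$ with $W_2(\F_p)$ agree on the nose, and correctly identifying the $\xi_{W_2}$-isotypic piece of the restricted Lang torsor. Both are essentially definitional, but a slip in either would corrupt the constants entering Theorem~\ref{MainTheorem}.
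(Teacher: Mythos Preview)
Your proposal is correct and follows essentially the same route as the paper: apply Lemma~\ref{ele} to the splitting $A \simeq W_2(\F_p) \times \oU$, identify the $W_2(\F_p)$-factor via Proposition~\ref{choice} and Lemma~\ref{QL}, identify the $\oU$-factor via Lemma~\ref{sps}, and combine using $\mathcal{L}_{\psi}(\beta_\xi s) \simeq \mathcal{L}_{\xi_{W_2}}(0,\beta_\xi s)$ together with the multiplicativity of $\mathcal{L}_{\xi_{W_2}}$. The paper's own proof is terser on the last two steps (it asserts the isomorphism $\mathcal{L}_{\psi}(\beta_\xi s) \simeq \mathcal{L}_{\xi_{W_2}}(0,\beta_\xi s)$ directly from $\xi_{W_2}|_{Z(H_R)} = \psi$ and leaves the tensor combination implicit), whereas you spell out both in more detail, but the strategy is identical.
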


\begin{proof}
The decomposition  $A=W_2(\F_p)\times\oU$ induces a cartesian square 
\[
\xymatrix{
C_R\ar[d]\ar[r]&C_R/\oU\ar[d]\\
C_{R}/W_2(\F_p)\ar[r]&C_R/A=\A^1_{\F_q}. 
}
\]
By $F_A=f_{\oU}^p+f_{\oU}$ and Proposition~\ref{choice}, the right vertical arrow is identified with $C_S\to\A^1_{\F_q},\ (x,y)\mapsto x^p+x$. Since $\xi_{W_2}|_{Z(H_R)} = \psi$, 
we have an isomorphism 
$\mathcal{L}_{\psi}(\beta_{\xi} s) \simeq 
\mathcal{L}_{\xi_{W_2}}(0,\beta_{\xi} s)$. 
Thus from Lemmas~\ref{QL}, \ref{ele}, and~\ref{sps}, it follows that 
\begin{align*}
\mathcal{Q}_{\xi} \simeq \mathcal{L}_{\xi_{W_2}}(s, \alpha s^2) \otimes \mathcal{L}_{\xi_{\oU}} 
&\simeq \mathcal{L}_{\xi_{W_2}}(s,\a s^2)
\otimes \mathcal{L}_{\xi_{W_2}}(0,\beta_{\xi} s) \\
& \simeq 
\mathcal{L}_{\xi_{W_2}}((s,\a s^2)
+(0,\beta_{\xi} s))
\simeq 
\mathcal{L}_{\xi_{W_2}}(s, \a s^2 + \beta_{\xi} s). 
\end{align*}
\end{proof}
Recall that we fix a faithful character $\xi_2 \in W_2(\mathbb{F}_2)^{\vee}$ 
and set
\[
  \xi_q \coloneqq \xi_2 \circ \Tr_{q/2} \in W_2(\F_q)^{\vee}
  \quad \text{and} \quad
  \sqrt{-1} \coloneqq \xi_2(1,0) \in \overline{\Q}_{\ell}^{\times}.
\]
As observed in the proof of Lemma~\ref{sps}, we have
$Z(H_R)\subset W_2(\F_p)$.
Since $Z(H_R)\simeq \F_p$ and $W_2(\F_p)$ is a free $\Z/4\Z$-module by Lemma \ref{free}, it follows that
$Z(H_R)=W_2(\F_p)[2]=\{0\}\times \F_p$.
Since the restriction of
$\xi=\xi_{W_2}\boxtimes\xi_{\oU}$ to $Z(H_R)=2 W_2(\F_p)$ is nontrivial by assumption, 
$\xi_{W_2}$ has order $4$.
By Lemma~\ref{trw}, there exists a unique element $(a_{W_2},b_{W_2}) \in W_2(\F_p)$ such that $a_{W_2} \neq 0$ and 
$\xi_{W_2} = \xi_2 \circ \Tr_{(a_{W_2},b_{W_2})}$.  
Let
\[ c_{R,\xi} \coloneqq (\alpha+(b_{W_2}/a_{W_2}^2))^{1/2} + a_{W_2} \beta_{\xi} \in \F_q. \] 
We have $(c_{R,\xi},0) \in W_2(\F_q)$.

The following result is the main theorem of this paper.

\begin{theorem}\label{qppc}
We have
\[
  \tau_{R,\xi,q} = \xi_{q}(c_{R,\xi},0)^{-1} \cdot (-1-\sqrt{-1})^{[\F_q : \F_2]}. 
\]
\end{theorem}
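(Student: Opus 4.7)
The plan is to combine the geometric identification of $\mathcal{Q}_\xi$ provided by Proposition \ref{qpp} with the trace computation carried out in Lemma \ref{plf}. By Proposition \ref{qpp}, there is an isomorphism of smooth $\overline{\Q}_\ell$-sheaves on $\A^1_{\F_q}$,
\[
\mathcal{Q}_\xi \simeq \mathcal{L}_{\xi_{W_2}}(s,\, \alpha s^2 + \beta_\xi s),
\]
and by Lemma \ref{rkQ} the cohomology $H^i_c(\A^1_\F, \mathcal{Q}_\xi)$ vanishes for $i \neq 1$ and is one-dimensional for $i = 1$. Therefore $\tau_{R,\xi,q}$ is simply the Frobenius trace on $H^1_c(\A^1_\F, \mathcal{L}_{\xi_{W_2}}(s, \alpha s^2 + \beta_\xi s))$, which I will compute by invoking Lemma \ref{plf}(2).

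Before applying Lemma \ref{plf}, I will verify that $\xi_{W_2}$ is a faithful character of $W_2(\F_p)$, which is required so that the element $c_{\xi_{W_2}}(\alpha, \beta_\xi)$ is defined. Under the splitting $A \simeq W_2(\F_p) \times \oU$ of \eqref{AUW}, the center $Z(H_R) \simeq \F_p$ is contained in the $W_2(\F_p)$-factor and is identified with the Verschiebung subgroup $\{0\} \times \F_p \subset W_2(\F_p)$. The assumption $\xi \in A_\psi^\vee$ with $\psi \neq 1$ therefore forces $\xi_{W_2}|_{\{0\} \times \F_p}$ to be non-trivial. Writing $\xi_{W_2} = \xi_2 \circ \Tr_{(a_{W_2}, b_{W_2})}$ and using the identity $(a_{W_2}, b_{W_2}) \cdot (0, y) = (0, a_{W_2}^2 y)$, this non-triviality forces $a_{W_2} \neq 0$, so $\xi_{W_2}$ has order $4$ by Lemma \ref{trw}.

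With these preliminaries, I will apply Lemma \ref{plf}(2) with $n = [\F_q : \F_2]$, so that $\F_{2^n} = \F_q$ and $\xi_{2^n} = \xi_q$, to the faithful character $\xi_{W_2}$ with $u = \alpha$ and $v = \beta_\xi$. By definition \eqref{cplf}, the element $c_{\xi_{W_2}}(\alpha, \beta_\xi) = (\alpha + b_{W_2}/a_{W_2}^2)^{1/2} + a_{W_2} \beta_\xi$ is exactly $c_{R,\xi}$. Combining with the Gauss sum evaluation $G_{\xi_q} = (-1-\sqrt{-1})^{[\F_q : \F_2]}$ from Lemma \ref{HDGr} gives the stated formula. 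Since all the substantive geometric and cohomological work has been carried out in Proposition \ref{qpp} and Lemma \ref{plf}, there is no genuine obstacle here; the theorem is a clean assembly of previously established computations, with the faithfulness check on $\xi_{W_2}$ being the only subtlety worth spelling out.
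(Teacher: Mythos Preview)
Your proof is correct and follows essentially the same approach as the paper, which simply cites Lemmas \ref{HDGr}, \ref{plf}(2), and Proposition \ref{qpp}. Your additional verification that $\xi_{W_2}$ has order $4$ (so that Lemma \ref{plf} applies) is handled by the paper in the paragraph immediately preceding the theorem statement rather than in the proof itself, but your argument via the restriction to the center is correct and makes the dependence explicit.
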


\begin{proof}
The assertion follows from Lemmas~\ref{HDGr}, \ref{plf} (2),  
 and Proposition~\ref{qpp}.  
\end{proof}

\begin{corollary}\label{qaq2}
Further assume that 
$H_R \subset \mathbb{F}_q^2$. 
\begin{itemize}
    \item[{\rm (1)}] The geometric Frobenius element ${\rm Fr}_q$ acting on $H^1(\overline{C}_{R,\F},\overline{\Q}_{\ell})$ has a unique eigenvalue 
\[ \xi_q(\a,0)^{-1} \cdot (-1-\sqrt{-1})^{[\F_q : \F_2]}. \]  
\item[{\rm (2)}] The degree $[\F_q : \F_2]$ is even. 
Let $d\coloneqq [\F_q:\F_2]/2 \in \mathbb{Z}$. 
Then 
\[
\Tr_{q/2}(\alpha)\equiv d \pmod 2. 
\] 
\end{itemize}
\end{corollary}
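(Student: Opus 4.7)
The plan is to derive both (1) and (2) from Theorem \ref{qppc} by making a judicious choice of $\xi$, then invoking Lemma \ref{Stone} (3) and the Weil bound. Under $H_R \subset \F_q^2$, Lemma \ref{Stone} (3) --- combined with the observation that the isomorphism in Proposition \ref{14} (3) is already defined over $\F_q$ because $\F_p$ is perfect of characteristic $2$ --- forces $\mathrm{Fr}_q$ to act on $H^1(\overline{C}_{R,\F}, \overline{\Q}_\ell)$ as a single scalar equal to $\tau_{R,\xi,q}$ for \emph{every} admissible $\xi$. In particular, the eigenvalue is unique and $\xi$-independent, so for (1) it suffices to evaluate Theorem \ref{qppc} at one convenient $\xi$.

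For this, I will take $\xi_{W_2} = \xi_p$, so that $(a_{W_2},b_{W_2}) = (1,0)$, and let $\xi_{\oU}$ be trivial; this is admissible because $\xi_p|_{Z(H_R)}$ is non-trivial. Then \eqref{bxi} forces $\beta_\xi = 0$, so $c_{R,\xi} = \alpha^{1/2}$. The Witt-vector Frobenius $(x,y)\mapsto(x^2,y^2)$ sends $(\alpha^{1/2},0)$ to $(\alpha,0)$, and $\Tr_{q/2}$ is invariant under this Frobenius; hence $\Tr_{q/2}(\alpha^{1/2},0) = \Tr_{q/2}(\alpha,0)$ in $W_2(\F_2)$ and $\xi_q(\alpha^{1/2},0) = \xi_q(\alpha,0)$. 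Substitution into Theorem \ref{qppc} then yields the formula in (1).

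For (2), set $n \coloneqq [\F_q:\F_2]$ and $\lambda \coloneqq \xi_q(\alpha,0)^{-1}(-1-\sqrt{-1})^n$. By (1), $L_{\overline{C}_R/\F_q}(T) = (1-\lambda T)^{2g}$ with $g>0$. Integrality of $L$ combined with algebraicity of $\lambda$ forces $\lambda \in \Z$, while the Weil bound gives $|\lambda| = \sqrt{q} = 2^{n/2}$. Hence $\lambda = \pm 2^{n/2}$, which already requires $n$ to be even. Using $(-1-\sqrt{-1})^2 = 2\sqrt{-1}$, one rewrites
\[
\lambda = 2^{n/2}\cdot\xi_q(\alpha,0)^{-1}\cdot\sqrt{-1}^{n/2}.
\]
Writing $\Tr_{q/2}(\alpha,0) = (s,\sigma) \in W_2(\F_2)$ with $s = \Tr_{q/2}(\alpha)$, and using the identification $W_2(\F_2)\simeq\Z/4\Z$ via $(a,b)\mapsto a+2b$ (so that $\xi_2(a,b) = \sqrt{-1}^{\,a+2b}$), one obtains $\xi_q(\alpha,0) = \sqrt{-1}^{\,s+2\sigma}$ and therefore $\lambda = 2^{n/2}\sqrt{-1}^{\,n/2 - s - 2\sigma}$. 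Reality of $\lambda$ then forces $s \equiv n/2\pmod 2$, i.e.\ $\Tr_{q/2}(\alpha) = n/2$ in $\F_2$.

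The only subtlety I expect is ensuring that $\Tr_{q/2}(\alpha^{1/2},0) = \Tr_{q/2}(\alpha,0)$ holds at the level of Witt vectors and not merely in the first coordinate; this follows because Frobenius is a ring automorphism of $W_2(\F_q)$, so the trace, being a sum of Frobenius shifts, is genuinely Frobenius-invariant on $W_2$. Everything else is bookkeeping around Theorem \ref{qppc} and standard consequences of the Weil conjectures.
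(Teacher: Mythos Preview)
Your proof is correct and follows essentially the same route as the paper. For (1) you pick the same convenient character (the paper allows any $a_{W_2}\neq 0$ with $b_{W_2}=0$ and $\xi_{\oU}=1$; your choice $\xi_{W_2}=\xi_p$ is the instance $a_{W_2}=1$), apply Lemma~\ref{Stone}~(3) and Theorem~\ref{qppc}, and reduce $\xi_q(\alpha^{1/2},0)$ to $\xi_q(\alpha,0)$ via Frobenius invariance of the Witt-vector trace, exactly as the paper does. For (2) the paper also begins by showing the eigenvalue is a rational integer (via the integrality of $\Tr(\mathrm{Fr}_q;H^1)$ rather than of the $L$-polynomial, but this is equivalent); the only genuine variation is that you invoke the Weil bound $|\lambda|=2^{n/2}$ to force $n$ even, whereas the paper argues directly that for odd $n$ the quantity $\xi_q(\alpha,0)^{-1}\bigl(\tfrac{-1-\sqrt{-1}}{\sqrt{2}}\bigr)^n$ is a primitive $8$th root of unity, contradicting $\lambda\in\Q$. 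Both arguments are short and yield the same trace identity; your appeal to Proposition~\ref{14}~(3) is harmless but unnecessary, since Lemma~\ref{Stone}~(3) already asserts scalar action on all of $H^1$.
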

\begin{proof}
(1) We take  $\xi=\xi_{W_2}\boxtimes\xi_{\oU} \in A_{\psi}^{\vee}$ such that 
$a_{W_2}\neq0$, $b_{W_2}=0$, and $\xi_{\oU}=1$. Then $c_{R,\xi}=\a^{1/2}$. 
Since 
$H_R \subset \F_q^2$,  Lemma~\ref{Stone} (3) implies that 
$\mathrm{Fr}_q$ acts on $H^1(\overline{C}_{R,\F},\overline{\Q}_{\ell}) \simeq 
 H_{\rm c}^1(C_{R,\F},\overline{\Q}_{\ell})$
 as the scalar multiplication by $\tau_{R,\xi,q}$.
 The claim follows from
 Theorem~\ref{qppc} and $\xi_q(\a,0)=\xi_q(\a^{1/2},0)$. 

 (2) The algebraic integer $ \xi_q(\a,0)^{-1} \cdot (-1-\sqrt{-1})^{[\F_q : \F_2]}$ is a rational number, hence an integer. Indeed, by (1) and Proposition~\ref{14} (1), we have 
\[
\Tr({\rm Fr}_q \mid H^1(\overline{C}_{R,\F},\overline{\Q}_\ell))=2g\cdot \xi_q(\a,0)^{-1} \cdot (-1-\sqrt{-1})^{[\F_q : \F_2]}, 
\]
where $g$ is the genus of $\overline{C}_R$. Since the trace on the left-hand side and $g$ are integers,
we have $\xi_q(\a,0)^{-1} \cdot (-1-\sqrt{-1})^{[\F_q : \F_2]} \in \Q$.

Let $\sqrt{2} \in \overline{\Q}_{\ell}$ be a square root of $2$.
Then $\zeta_8 \coloneqq \tfrac{-1-\sqrt{-1}}{\sqrt{2}}$ is a primitive $8$-th root of unity, since $\zeta_8^2=\sqrt{-1}=\xi_2(1,0)$. We may write 
\[\xi_q(\a,0)^{-1} \cdot (-1-\sqrt{-1})^{[\F_q : \F_2]}=\xi_q(\a,0)^{-1} \cdot \zeta_8^{[\F_q : \F_2]}\cdot 2^{[\F_q:\F_2]/2}. \]
Recall that $\xi_q(\a,0)$ is a $4$-th root of unity. 
If $[\F_q:\F_2]$ is odd, then $\xi_q(\a,0)^{-1} \cdot \zeta_8^{[\F_q : \F_2]}$ is a primitive $8$-th root of unity, which is impossible since the left-hand side lies in $\Q$.
Therefore, $[\F_q:\F_2]$ must be even. 
Let $d=[\F_q:\F_2]/2 \in \Z$ and let 
$\bar{d}\coloneqq d \pmod 2 \in \F_2$. 
Since $\zeta_8^2=\xi_2(1,0)$, we can write 
\[\xi_q(\a,0)^{-1} \cdot (-1-\sqrt{-1})^{2d}=\xi_2(\Tr_{q/2}(\alpha)-\bar{d}, \ast)^{-1}\cdot 2^d \]
for some $\ast \in \F_2$.
This is an integer if and only if $\Tr_{q/2}(\alpha)-\bar{d}=0$. 
\end{proof}

\section{Maximal curves of the form $y^p+y=x^{p+1}+a_0x^2$}
\label{Section:maximalcurve}

In this section, as an illustration of Theorem~\ref{hc},
we give $\F_{p^4}$-maximal
van der Geer--van der Vlugt curves
associated with $\F_p$-linearized polynomials of the form $x^p + a_0 x$. By taking finite quotients of these curves, we obtain more examples of maximal curves. 

\subsection{Maximal and minimal curves}
\label{Subsection:maximalcurve}

Here we briefly recall the notation on maximal and minimal curves.

Let $C$ be a smooth projective geometrically connected curve over $\F_q$ of genus $g(C)$.
By the Hasse--Weil inequality,
we have
\[
  \bigl||C(\F_q)| - (q + 1)\bigr| \leq 2 g(C) \sqrt{q}.
\]

The curve $C$ is said to be \emph{$\F_q$-maximal} if
$|C(\F_q)| = q + 1 + 2 g(C) \sqrt{q}$. 
Equivalently, its $L$-polynomial is
\[
L_{C/\F_q}(T) = \det\bigl(1-\mathrm{Fr}_q \, T \mid H^1(C_{\overline{\F}_q},\overline{\Q}_{\ell})\bigr)
= (1 + \sqrt{q}\, T)^{2 g(C)}.
\]
In other words, $\mathrm{Fr}_q$ acts on 
$H^1(C_{\overline{\F}_q},\overline{\Q}_{\ell})$ 
as scalar multiplication by $-\sqrt{q}$.

Similarly, $C$ is \emph{$\F_q$-minimal} if
$|C(\F_q)| = q + 1 - 2 g(C) \sqrt{q}$, 
equivalently
\[
L_{C/\F_q}(T) = (1 - \sqrt{q}\, T)^{2g(C)},
\]
which means that $\mathrm{Fr}_q$ acts as scalar multiplication by $\sqrt{q}$ on 
$H^1(C_{\overline{\F}_q},\overline{\Q}_{\ell})$.

Constructing explicit maximal (or minimal) curves is an interesting problem in number theory,
with potential applications in coding theory; 
see \cite{St} for more details. 

\subsection{Construction of maximal curves}
We write $p=2^{f_0}$. 
\begin{lemma}\label{xip4}
\begin{itemize}
\item[{\rm (1)}] 
We have
\[
\F_{p^2} \subset \Ker \Tr_{p^4/p}
= \{ x^p + x \mid x \in \F_{p^4} \}.
\]

\item[{\rm (2)}]
Let $a \in \F_{p^2}$ and let $\alpha \in \F_{p^4}$ satisfy
$\alpha^p + \alpha = a + 1$.

\begin{itemize}
\item[{\rm (i)}]
Then
\[
\Tr_{p^2/p}(\alpha^{p^2+1}) = (a+1)^{p+1}.
\]

\item[{\rm (ii)}]
Moreover,
\[
\xi_{p^4}(\alpha,0)
= (-1)^{\Tr_{p/2}(a^{p+1}) + f_0}.
\]
\end{itemize}

\end{itemize}
\end{lemma}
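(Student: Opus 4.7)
The plan is to treat the three assertions of Lemma~\ref{xip4} independently.

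For part (1), I would first note that both $\{x^p+x : x \in \F_{p^4}\}$ and $\Ker \Tr_{p^4/p}$ are $\F_p$-subspaces of $\F_{p^4}$ of dimension $3$ (the Artin--Schreier kernel is $\F_p$, and trace is surjective), so it suffices to verify one inclusion. The inclusion $\{x^p+x\} \subseteq \Ker \Tr_{p^4/p}$ is immediate since $\Tr_{p^4/p}(x^p + x) = 2 \Tr_{p^4/p}(x) = 0$ in characteristic~$2$. For the inclusion $\F_{p^2} \subseteq \Ker \Tr_{p^4/p}$, any $c \in \F_{p^2}$ satisfies $\Tr_{p^4/p}(c) = c + c^p + c^{p^2} + c^{p^3} = 2(c + c^p) = 0$.

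For part (2)(i), the key is to derive the twin identities $\alpha^{p^2} + \alpha^p = (a+1)^p$ and $\alpha^{p^3} + \alpha = (a+1)^p$. The first is the $p$-th power of the defining relation $\alpha^p + \alpha = a+1$. For the second I would use that $a+1 \in \F_{p^2}$, hence $(a+1)^{p^2} = a+1$, so two applications of Frobenius give $\alpha^{p^3} + \alpha^{p^2} = a+1$; summing this with the previous two relations in characteristic~$2$ yields $\alpha^{p^3} + \alpha = (a+1)^p$. Substituting $\alpha^{p^2} = \alpha^p + (a+1)^p$ and $\alpha^{p^3} = \alpha + (a+1)^p$ into $\Tr_{p^2/p}(\alpha^{p^2+1}) = \alpha \alpha^{p^2} + \alpha^p \alpha^{p^3}$ produces the desired factorization $(a+1)^p(\alpha + \alpha^p) = (a+1)^{p+1}$.

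For part (2)(ii), the plan is to compute $\Tr_{p^4/2}(\alpha, 0) \in W_2(\F_2)$ in stages via the factorization $\Tr_{p^4/2} = \Tr_{p/2} \circ \Tr_{p^4/p}$. Iteration of the Witt addition $(x,0)+(y,0) = (x+y,xy)$ in characteristic~$2$ yields
\[
\Tr_{p^4/p}(\alpha, 0) = \bigl(\Tr_{p^4/p}(\alpha),\ \sigma_2(\alpha, \alpha^p, \alpha^{p^2}, \alpha^{p^3})\bigr) \in W_2(\F_{p^4}),
\]
where $\sigma_2$ is the second elementary symmetric polynomial. The first coordinate vanishes by direct computation using $(\alpha + \alpha^p) + (\alpha^{p^2} + \alpha^{p^3}) = (a+1) + (a+1)^{p^2} = 2(a+1) = 0$. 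Setting $b \coloneqq a+1$ and using $\alpha^p = \alpha + b$, $\alpha^{p^2} = \alpha + b + b^p$, $\alpha^{p^3} = \alpha + b^p$ (where $b^{p^2} = b$), a careful expansion of the six pairwise products collapses in characteristic~$2$ to
\[
c = b^2 + b^{2p} + b^{p+1} = (b+b^p)^2 + b^{p+1} = (a+a^p)^2 + a^{p+1} + (a+a^p) + 1 \in \F_p.
\]
Since Witt addition of elements of the form $(0,\ast)$ is componentwise, $\Tr_{p/2}(0,c) = (0, \Tr_{p/2}(c))$. The identity $\Tr_{p/2}(x^2) = \Tr_{p/2}(x)$ for $x \in \F_p$ (Frobenius commutes with trace and is trivial on $\F_2$) cancels the $(a+a^p)^2$ and $(a+a^p)$ contributions, giving $\Tr_{p/2}(c) = \Tr_{p/2}(a^{p+1}) + f_0$ in $\F_2$. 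Finally, since $(0,1) = (1,0) + (1,0)$ in $W_2(\F_2)$, we have $\xi_2(0,1) = \xi_2(1,0)^2 = -1$, yielding the claimed formula. The main obstacle will be the patient, char-$2$ simplification of $\sigma_2(\alpha, \alpha^p, \alpha^{p^2}, \alpha^{p^3})$ while tracking the many cancellations among the pairwise products; once that core computation is done, the remainder of the proof is routine.
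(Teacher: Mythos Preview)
Your proof is correct. The arguments for (1) and (2)(i) are minor variations on the paper's: for (2)(i) the paper instead expands $(a+1)^{p+1}=(\alpha+\alpha^p)(\alpha^p+\alpha^{p^2})$ and shows that the difference $\Tr_{p^2/p}(\alpha^{p^2+1})-(a+1)^{p+1}$ factors as $\alpha^p\cdot\Tr_{p^4/p}(\alpha)=0$, but your direct substitution is just as clean.

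The genuine divergence is in (2)(ii). The paper factors the Witt trace as $\Tr_{p/2}\circ\Tr_{p^2/p}\circ\Tr_{p^4/p^2}$; the first step gives $\Tr_{p^4/p^2}(\alpha,0)=(a+a^p,\alpha^{p^2+1})$, and then applying $\Tr_{p^2/p}$ to the second coordinate invokes the already-proved (2)(i) to obtain $(0,(a+a^p)^2+(a+1)^{p+1})$ directly. You instead compute $\Tr_{p^4/p}(\alpha,0)$ in one stroke via the identity $\sum_{i}(x_i,0)=(\sigma_1,\sigma_2)$ and expand the six pairwise products by hand. Your route makes (2)(ii) logically independent of (2)(i), at the cost of the longer $\sigma_2$ bookkeeping; the paper's two-step factorization is more economical precisely because it recycles (2)(i). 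Both approaches land on the same element $(0,(a+a^p)^2+a^{p+1}+(a+a^p)+1)\in W_2(\F_p)$, after which the $\Tr_{p/2}$ step is identical.
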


\begin{proof}
(1) By Lemma~\ref{Hilbert90}, we have
\[
\Ker\Tr_{p^4/p}
=
\{x^p-x \mid x\in \F_{p^4}\}
=
\{x^p+x \mid x\in\F_{p^4}\},
\]
since the characteristic is $2$.
Moreover, for $y\in\F_{p^2}$, we have 
$\Tr_{p^4/p}(y)=2 \Tr_{p^2/p}(y)=0$, and hence $\F_{p^2}\subset\Ker\Tr_{p^4/p}$.

(2) (i) We have 
\[(a+1)^{p+1}=(\alpha+\alpha^p)(\alpha^p+\alpha^{p^2})=\alpha^{p^2+1}+\alpha^{p+1}+\alpha^{2p}+\alpha^{p^2+p}. 
\]
Thus, the difference $\Tr_{p^2/p}(\alpha^{p^2+1})-(a+1)^{p+1}$ is equal to 
\[\alpha^{p^3+p}+\alpha^{p+1}+\alpha^{2p}+\alpha^{p^2+p}=\alpha^p(\alpha+\alpha^p+\alpha^{p^2}+\alpha^{p^3}). \]
Since $\alpha+\alpha^p=a+1 \in \F_{p^2}$, we also have $\alpha^{p^2}+\alpha^{p^3}=a+1$. 
The assertion follows. 

(2) (ii) First we compute 
\[
\Tr_{p^4/p^2}(\a,0)=(\alpha,0)+(\alpha^{p^2},0)=(\alpha+\alpha^{p^2},\alpha^{p^2+1})=(a+a^{p},\alpha^{p^2+1}).
\]
Applying $\Tr_{p^2/p}$ and using $a+a^p\in\F_p$ and (i), we have 
\begin{align*}
\Tr_{p^4/p}(\alpha,0)&=\Tr_{p^2/p}(a+a^{p},\alpha^{p^2+1}) \\
&=(0,(a+a^p)^2+\Tr_{p^2/p}(\alpha^{p^2+1}))=(0,(a+a^p)^2+(a+1)^{p+1}).
\end{align*}
Taking $\Tr_{p/2}$, we find 
\[\Tr_{p^4/2}(\alpha,0)=(0,\Tr_{p/2}(a+a^p+(a+1)^{p+1}))=(0,\Tr_{p/2}(a^{p+1}+1)).\]
This proves the assertion, since $[\F_p:\F_2]=f_0$. 
\end{proof}

\begin{proposition}\label{2pp}
Let $S(x)=x^p+a_0 x$ with 
 $a_0 \in \mathbb{F}_{p^2}$. 
 \begin{itemize}
 \item[{\rm (1)}]  We have $H_S \subset \mathbb{F}_{p^4}^2$. 
 \item[{\rm (2)}] 
Assume $\Tr_{p/2}(a^{p+1}_0)=1$. 
Then the curve $\overline{C}_S$ is $\mathbb{F}_{p^4}$-maximal.  
 \end{itemize}
\end{proposition}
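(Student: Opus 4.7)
By definition of the Heisenberg group we have $H_S = \{(a,b)\in V_S\times\F\mid b^p+b=a^{p+1}+a_0a^2\}$, and $V_S = \F_{p^2}$. The hypothesis $a_0\in\F_{p^2}$ means that $a^{p+1}+a_0a^2\in\F_{p^2}$ for every $a\in V_S$. By Lemma \ref{xip4} (1), $\F_{p^2}\subset\{x^p+x\mid x\in\F_{p^4}\}$, so I can choose $c\in\F_{p^4}$ with $c^p+c=a^{p+1}+a_0a^2$. Then for any $b$ lifting this, $(b-c)^p+(b-c)=0$, i.e.\ $b-c\in\F_p$; hence $b\in\F_{p^4}$. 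This gives $H_S\subset\F_{p^4}^2$.

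\textbf{Plan for (2).} Thanks to (1), Corollary \ref{qaq2} (1) applies with $q=p^4$, so $\mathrm{Fr}_{p^4}$ acts on $H^1(\overline{C}_{S,\F},\overline{\mathbb{Q}}_\ell)$ as scalar multiplication by
\[ \lambda \coloneqq \xi_{p^4}(\alpha,0)^{-1}\cdot(-1-\sqrt{-1})^{[\F_{p^4}:\F_2]} = \xi_{p^4}(\alpha,0)^{-1}\cdot(-1-\sqrt{-1})^{4f_0}, \]
where $\alpha\in\F_{p^4}$ satisfies $\alpha^{p^{-1}}+\alpha=a_0+1$. The proof reduces to computing these two factors and checking that $\lambda=-p^2$, since then $L_{\overline{C}_S/\F_{p^4}}(T)=(1+p^2T)^{2g(\overline{C}_S)}$, which is the $\F_{p^4}$-maximality condition.

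\textbf{The two computations.} A direct calculation gives $(-1-\sqrt{-1})^2=2\sqrt{-1}$ and hence $(-1-\sqrt{-1})^4=-4$, so $(-1-\sqrt{-1})^{4f_0}=(-1)^{f_0}\cdot 4^{f_0}=(-1)^{f_0}\cdot p^2$. For the character value, raise $\alpha^{p^{-1}}+\alpha=a_0+1$ to the $p$-th power to get $\alpha^p+\alpha=a_0^p+1$, so in the notation of Lemma \ref{xip4} (2) one takes $a=a_0^p$. Since $a_0\in\F_{p^2}$, we have $a^{p+1}=a_0^{p(p+1)}=a_0^{p+1}$, and the hypothesis $\Tr_{p/2}(a_0^{p+1})=1$ together with Lemma \ref{xip4} (2)(ii) yields $\xi_{p^4}(\alpha,0)=(-1)^{1+f_0}$.

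\textbf{Conclusion.} Combining the two,
\[ \lambda = (-1)^{1+f_0}\cdot(-1)^{f_0}\cdot p^2 = -p^2, \]
which is exactly $-\sqrt{p^4}$, so $\overline{C}_S$ is $\F_{p^4}$-maximal. The only non-routine point is assembling the pieces: recognising that the $q=p^4$ hypothesis of Corollary \ref{qaq2} (1) is provided by part (1), and rewriting the defining equation of $\alpha$ in the form $\alpha^p+\alpha=a+1$ so that Lemma \ref{xip4} (2) applies with $a=a_0^p$. Once these translations are made, the rest is a short arithmetic verification.
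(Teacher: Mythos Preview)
Your proof is correct and follows essentially the same route as the paper. The only cosmetic difference is that the paper invokes Lemma \ref{Stone} (3) and Theorem \ref{hc} directly (choosing $\alpha$ with $\alpha^p+\alpha=a_0+1$, so that $a=a_0$ in Lemma \ref{xip4} (2)), whereas you go through Corollary \ref{qaq2} (1), which forces the normalization $\alpha^{p^{-1}}+\alpha=a_0+1$ and hence $a=a_0^p$; your observation that $a_0^{p(p+1)}=a_0^{p+1}$ for $a_0\in\F_{p^2}$ reconciles the two.
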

\begin{proof}
(1)
 Let $a \in \mathbb{F}_{p^2}=V_S$. 
 We consider the equation
 \[ b^p + b = a S(a) =
 a^{p+1}+a_0 a^2 \in \mathbb{F}_{p^2}. \] 
By Lemma~\ref{xip4} (1), it follows that $b \in \mathbb{F}_{p^4}$. 
Thus, we have $H_S \subset \mathbb{F}_{p^4}^2$. 

(2) Recall that we fix a faithful character $\xi_2 \in W_2(\F_2)^{\vee}$ and write $\xi_p=\xi_2\circ\Tr_{p/2}$. 
By the inclusion $H_S \subset \F_{p^4}^2$ and Lemma~\ref{Stone} (3), the geometric Frobenius $\mathrm{Fr}_{p^4}$ acts on 
$H^1(\overline{C}_{S,\F},\overline{\Q}_{\ell})$ as scalar multiplication by $\tau_{S,\xi_p,p^4}$. 
Thus it suffices to show $\tau_{S,\xi_p,p^4}=-p^2$.  

We take $\a \in \F_{p^4}$ such that 
$\a^p+\a=a_0+1$ by Lemma~\ref{xip4} (1). 
By Theorem~\ref{hc}, 
\[
\tau_{S,\xi_p,p^4}=\xi_{p^4}(\a^{p/2},0)^{-1} \cdot (-1-\sqrt{-1})^{4f_0}=\xi_{p^4}(\a,0)^{-1} \cdot (-1)^{f_0}p^2. 
\]
This equals $-p^2$ by Lemma~\ref{xip4} (2) (ii). 
\end{proof}
\begin{corollary}\label{abc}
Let $S(x)=x^p+a_0 x$ with 
 $a_0 \in \mathbb{F}_{p^2}$. 
Assume that $a_0 \in \mathbb{F}_p$
 and $\Tr_{p/2}(a_0)=1$. 
Then the curve $\overline{C}_S$ is 
$\mathbb{F}_{p^4}$-maximal. 
\end{corollary}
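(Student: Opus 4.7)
The plan is to deduce Corollary \ref{abc} directly from Proposition \ref{2pp} (2) applied to the case where $a_0$ happens to lie in $\mathbb{F}_p$, which is a subfield of $\mathbb{F}_{p^2}$. So I only need to verify that the numerical hypothesis $\mathrm{Tr}_{p/2}(a_0^{p+1}) = 1$ required in Proposition \ref{2pp} (2) follows from the hypothesis $\mathrm{Tr}_{p/2}(a_0) = 1$ under the assumption $a_0 \in \mathbb{F}_p$.

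The key computation is straightforward. Since $a_0 \in \mathbb{F}_p$, we have $a_0^p = a_0$, so
\[
  a_0^{p+1} = a_0 \cdot a_0^p = a_0^2.
\]
Because $p_0 = 2$, the Frobenius $x \mapsto x^2$ is a field automorphism that commutes with the trace map $\mathrm{Tr}_{p/2} \colon \mathbb{F}_p \to \mathbb{F}_2$, hence
\[
  \mathrm{Tr}_{p/2}(a_0^{p+1}) = \mathrm{Tr}_{p/2}(a_0^2) = \mathrm{Tr}_{p/2}(a_0)^2.
\]
Since $\mathrm{Tr}_{p/2}(a_0) \in \mathbb{F}_2$ and squaring is the identity on $\mathbb{F}_2$, this equals $\mathrm{Tr}_{p/2}(a_0)$, which is $1$ by hypothesis.

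Thus the hypotheses of Proposition \ref{2pp} (2) are met (noting $\mathbb{F}_p \subset \mathbb{F}_{p^2}$), and the $\mathbb{F}_{p^4}$-maximality of $\overline{C}_S$ follows. There is no real obstacle here; the statement is essentially a specialization of Proposition \ref{2pp} (2), and the only thing to check is the compatibility of the trace condition, which reduces to the identity $\mathrm{Tr}_{p/2}(a_0)^2 = \mathrm{Tr}_{p/2}(a_0)$ on $\mathbb{F}_2$.
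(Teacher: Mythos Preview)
Your proof is correct and matches the paper's own argument essentially verbatim: both reduce to Proposition \ref{2pp} (2) by observing that $a_0^{p+1}=a_0^2$ when $a_0\in\mathbb{F}_p$ and that $\Tr_{p/2}(a_0^2)=\Tr_{p/2}(a_0)$. Your justification of this last equality via $\Tr_{p/2}(a_0^2)=\Tr_{p/2}(a_0)^2$ and squaring being the identity on $\mathbb{F}_2$ is slightly more explicit than the paper's one-line computation, but the content is the same.
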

\begin{proof}
The assertion follows from 
Proposition~\ref{2pp} (2) since $\Tr_{p/2}(a^{p+1}_0)=\Tr_{p/2}(a^2_0)=\Tr_{p/2}(a_0)=1$. 
\end{proof}

We construct finite quotients of $\overline{C}_S$ with $a_0=1$, which provide examples of maximal curves. 
\begin{corollary}\label{abcd}
Let $a \in (\Ker \Tr_{p/2})\setminus \{0\}$. 
There exists an $\F_2$-linearized polynomial 
$Q_a(x) \in \F_p[x]$ such that 
\[
a^{-2}\sum_{i=1}^{f_0}(a x)^{2^i}=Q_a(x^2+a x). 
\]
Assume that $f_0$ is odd. Then 
the smooth compactification of the affine curve 
defined by $z^2+z=x Q_a(x)$
is $\F_{p^4}$-maximal. 
\end{corollary}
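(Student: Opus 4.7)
The plan splits into two parts. For (1), I would show the existence of $Q_a$ by an additive-polynomial division argument; for (2), I would realize $\overline{C}_{Q_a}$ as a dominant image of the $\F_{p^4}$-maximal curve $\overline{C}_S$ with $S(x)=x^p+x$ (which is $\F_{p^4}$-maximal by Corollary \ref{abc}, since $f_0$ odd forces $\Tr_{p/2}(1)=f_0\equiv 1$).

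For (1), the polynomial $R(x):=a^{-2}\sum_{i=1}^{f_0}(ax)^{2^i}$ is $\F_2$-linearized with coefficients in $\F_p$. Using $a\in\F_p$ (so $a^{2^{f_0}}=a$), reindexing yields
\[
R(a) = a^{-2}\sum_{i=1}^{f_0} a^{2^{i+1}} = a^{-2}\sum_{j=0}^{f_0-1} a^{2^j} = a^{-2}\,\Tr_{p/2}(a) = 0,
\]
so $R$ vanishes on $\{0,a\}=\Ker(x^2+ax)$. Since $x^2+ax$ is a separable $\F_2$-linearized polynomial, a right-division in $\mathscr{A}$ (analogous to the argument in the proof of Lemma \ref{q0}) produces a unique $\F_2$-linearized $Q_a\in\F_p[t]$ with $R(x)=Q_a(x^2+ax)$.

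For (2), I would define $\phi\colon C_S\to C_{Q_a}$ by $(x,y)\mapsto(x^2+ax,z)$ with
\[
z := \sum_{i=0}^{f_0-1} y^{2^i} + g(x), \qquad g(x) := \sum_{i=0}^{f_0-1} a^{2^i-1}x^{2^i+1}.
\]
The telescoping identity $\bigl(\sum_i y^{2^i}\bigr)^2+\sum_i y^{2^i}=y^p+y$ evaluates to $x^{p+1}+x^2$ on $C_S$, and a direct expansion of $g^2+g$ gives
\[
g^2+g = x^2+a^{p-2}x^{p+2}+\sum_{i=1}^{f_0-1} a^{2^i-2}\bigl(x^{2^i+2}+ax^{2^i+1}\bigr).
\]
Adding the two, cancelling $x^2+x^2=0$, and using $a^{p-1}=1$ to merge $x^{p+1}=a^{p-1}x^{p+1}$ with $a^{p-2}x^{p+2}$ into the summand $a^{p-2}(x^{p+2}+ax^{p+1})$, I would obtain
\[
z^2+z = \sum_{i=1}^{f_0} a^{2^i-2}\bigl(x^{2^i+2}+ax^{2^i+1}\bigr) = (x^2+ax)R(x) = (x^2+ax)\,Q_a(x^2+ax)
\]
on $C_S$. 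Hence $\phi$ lands in $C_{Q_a}$, and since its $x$-component is non-constant, $\phi$ extends to a dominant morphism $\overline{\phi}\colon\overline{C}_S\to\overline{C}_{Q_a}$ of smooth projective curves.

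The pullback $\overline{\phi}^{\ast}\colon H^1(\overline{C}_{Q_a,\F},\overline{\Q}_\ell)\hookrightarrow H^1(\overline{C}_{S,\F},\overline{\Q}_\ell)$ is injective (since $\overline{\phi}_{\ast}\circ\overline{\phi}^{\ast}=\deg\overline{\phi}\cdot\mathrm{id}$), so Frobenius eigenvalues on the source form a subset of those on the target. Because $\overline{C}_S$ is $\F_{p^4}$-maximal, all these eigenvalues equal $-p^2$, hence $\overline{C}_{Q_a}$ is $\F_{p^4}$-maximal. The main obstacle is verifying the polynomial identity for $z^2+z$: the calculation is tedious but elementary, the only non-trivial ingredient being that $a\in\F_p^\times$ (so $a^{p-1}=1$) is used to absorb the extremal term.
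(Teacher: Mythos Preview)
Your proof is correct and essentially identical to the paper's: both exhibit $\overline{C}_{Q_a}$ as a quotient of the $\F_{p^4}$-maximal curve $\overline{C}_S$ (with $S(x)=x^p+x$), and your polynomial $g(x)=\sum_{i=0}^{f_0-1}a^{2^i-1}x^{2^i+1}$ is exactly the $\Delta_0(x)=a^{-2}\sum_{i=0}^{f_0-1}(ax)^{2^i+1}$ used in the paper. The only cosmetic difference is that the paper factors $C_S\to D_S\to C_{Q_a}$ through the intermediate curve $D_S$ defined by $z^2+z=xS(x)$, while you write the composite map directly.
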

\begin{proof}
For an additive polynomial $Q(x) \in \F_p[x]$, let $D_Q$ denote the smooth affine curve over $\F_p$ defined by $z^2+z=xQ(x)$. 

Let $S(x) = x^p + x$.  
Since $f_0$ is odd, we have $\Tr_{p/2}(1) = f_0=1$.  
Thus, by Corollary~\ref{abc}, the curve $C_S$ is $\mathbb{F}_{p^4}$-maximal.  
The map $(x, y) \mapsto \bigl(x, \sum_{i = 0}^{f_0 - 1} y^{2^i} \bigr)$ defines a finite \'etale map $C_S \to D_S$.  

We apply Lemma~\ref{phif} to obtain a quotient of $D_S$.  
Let $a \in (\Ker \Tr_{p/2}) \setminus \{0\}$.  
Define the $\F_2$-linearized polynomial 
$
P(x) = a^{-2} \sum_{i = 1}^{f_0}(a x)^{2^i} \in \F_p[x].
$
Since $a \in \Ker \Tr_{p/2}$, we have $P(a) =a^{-2} \Tr_{p/2}(a)^4 =0$, and hence we can write $P(x) = Q_a(x^2 + a x)$ for an $\F_2$-linearized polynomial 
$Q_a(x) \in \mathbb{F}_p[x]$. 

Let 
\[
\Delta_0(x) \coloneqq a^{-2} \sum_{i = 0}^{f_0 - 1}(a x)^{2^i + 1} \in \F_p[x]. 
\]
Then, we can directly verify the identities 
\begin{align*}
\Delta_0(x)^2 + \Delta_0(x) 
&=x^{p+1}+x^2+(x^2+a x) P(x) \\
&= x^{p + 1} + x^2 + (x^2 + a x) Q_a(x^2 + a x).
\end{align*}
Consequently, by Lemma~\ref{phif}, we obtain a finite \'etale map $D_S \to D_{Q_a}$, which shows that $D_{Q_a}$ is a finite quotient of $C_S$.  
Since $C_S$ is $\mathbb{F}_{p^4}$-maximal by Corollary~\ref{abc}, the assertion follows.
\end{proof}

\section{The $L$-polynomials of twists of van der Geer--van der Vlugt curves}
\label{Section:Twists}

Let $R(x)\in\mathscr{A}$ be an $\F_p$-linearized polynomial of degree $p^e$ with $e\geq1$, with associated curve $C_R$. Van der Geer--van der Vlugt curves defined by $R(x)+ax$ for some $a\in \F_q$ are called \emph{twists} of $C_R$. 
In this section, we study such twists as applications of the results from previous sections. 

\subsection{Twists of van der Geer--van der Vlugt curves}
We first note that twists share the same polynomials defined in Section \ref{Section:Review}.  
\begin{lemma}\label{222}
For any $a \in \F_q$, we have $E_R(x)=E_{R+ax}(x)$, 
$f_R(x,y)=f_{R+ax}(x,y)$, and $\omega_R(x,y)=\omega_{R+ax}(x,y)$. In particular, we have $V_R=V_{R+ax}$, and any totally isotropic subspace
$U \subset V_R$ with respect to $\omega_R$ is
also totally isotropic with respect to $\omega_{R+ax}$.
\end{lemma}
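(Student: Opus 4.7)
The plan is a direct calculation that leverages $p_0 = 2$ to cancel doubled terms. Set $R'(x) \coloneqq R(x)+ax$, whose coefficients satisfy $a_0' = a_0+a$ and $a_i' = a_i$ for $i \geq 1$. For $E_R$: since $p^e$ is a power of $p_0$, Frobenius gives $(R(x)+ax)^{p^e} = R(x)^{p^e}+(ax)^{p^e}$ and $((a_0+a)x)^{p^e} = (a_0 x)^{p^e}+(ax)^{p^e}$. Substituting into \eqref{-aa}, two copies of $(ax)^{p^e}$ appear---one from the $R'(x)^{p^e}$ term and one from $\sum_i (a'_i x)^{p^{e-i}}$---and they cancel in characteristic $2$, yielding $E_{R'}(x) = E_R(x)$.

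For $f_R$: the substitution $R \rightsquigarrow R'$ alters only the $i=0$ contribution to the inner double sum in \eqref{-a} (where $a_0$ appears) and the outer sum (where $R(y)$ appears). Applying Frobenius to separate the affected pieces, the difference becomes
\[
f_{R'}(x,y) - f_R(x,y) = -\sum_{j=0}^{e-1}(axy)^{p^j} - \sum_{i=0}^{e-1}(axy)^{p^i} = -2\sum_{j=0}^{e-1}(axy)^{p^j} = 0
\]
in characteristic $2$. A slicker alternative, once $E_{R'} = E_R$ is established, is to note that the defining identity \eqref{a} applied to $R'$ reads $f_{R'}^p - f_{R'} = -x^{p^e}E_R(y) + xR(y) + yR(x) + 2axy$, which in characteristic $2$ is the same relation satisfied by $f_R$; since $f_R(0,0) = f_{R'}(0,0) = 0$ and the equation $z^p - z = 0$ forces $z \in \F_q[x,y]$ to be a constant in $\F_p$, uniqueness gives $f_{R'} = f_R$. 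Either way, $\omega_{R'} = \omega_R$ is immediate from \eqref{om}.

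The remaining assertions are then formal: $V_R = \Ker E_R = \Ker E_{R'} = V_{R'}$, and since $\omega_R$ and $\omega_{R'}$ are literally the same bilinear form on this common space, a subspace is totally isotropic for $\omega_R$ if and only if it is for $\omega_{R'}$. I foresee no genuine obstacle---the proof amounts to careful bookkeeping of Frobenius expansions and recognizing the $2(\cdot) = 0$ cancellations at each step.
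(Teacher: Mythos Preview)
Your proof is correct and follows essentially the same approach as the paper: a direct computation from the definitions \eqref{-aa} and \eqref{-a}, using that doubled terms vanish in characteristic $2$. The paper is terser (it just asserts $E_R = E_{R+ax}$ from \eqref{-aa} and writes out $f_{R+ax}$ in one line), while you spell out the $E_R$ cancellation and also offer the alternative uniqueness argument via \eqref{a}; but the substance is the same.
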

\begin{proof}
From \eqref{-aa}, we have $E_R(x)=E_{R+ax}(x)$. 
Using equation~\eqref{-a}, we compute:
\[
f_{R+ax}(x,y) = \sum_{i=0}^{e-1}
\sum_{j=0}^{e-i-1}(a_i^{p^i}x^{p^i} y)^{p^j}+
\sum_{j=0}^{e-1}(a x y)^{p^j}+\sum_{i=0}^{e-1}(x R(y)+a x y)^{p^i} = f_R(x,y).
\]
Since 
$\omega_{R}(x,y)=f_R(x,y)+f_R(y,x)$, we obtain  $\omega_R(x,y)=\omega_{R+ax}(x,y)$. 
\end{proof}

For an $\F_p$-linearized polynomial $g(x)=\sum_{i=0}^mb_ix^{p^i} \in \mathscr{A}$, we set 
\[g^\ast(x)\coloneqq \sum_{i=0}^m(b_ix)^{p^{-i}}\in\F_q\bigl[x^{p^{-\infty}}\bigr].\]
Then we have $g(h(x))^\ast=h^\ast(g^\ast(x))$ for $g(x), h(x)\in\mathscr{A}$. 
\begin{lemma}\label{adjoint}
For two polynomials $f_1(x), f_2(x) \in \F_q[x]$, we write $f_1(x) \sim f_2(x)$ if there exists a polynomial $d(x) \in \F_q[x]$ such that
\[
d(x)^p + d(x) = f_1(x) + f_2(x).
\]
For $f(x) \in \mathscr{A}$ and $t \in \F_q$, we have
\[
f^\ast(t) x \sim t f(x).
\]
\end{lemma}

\begin{proof}
We write $f(x)=\sum_{i=0}^m b_i x^{p^i}$.
Since $(ax)^{p^i} \sim ax$ for any integer $i \ge 0$ and any $a \in \F_q$, we obtain 
\[
f^\ast(t) x
= \sum_{i=0}^m (b_i t)^{p^{-i}} x
\sim \sum_{i=0}^m b_i t x^{p^i}
= t f(x).
\]
This proves the lemma.
\end{proof}

We follow the notation and assumptions in Subsection \ref{R1R2} up to Proposition~\ref{221}. Let $\overline{A}_1\subset V_{R_1}$ be a maximal totally isotropic subspace containing $\Ker f$ and put $\overline{A}_2\coloneqq f(\overline{A}_1)$. Then 
$|\overline{A}_2|=|\overline{A}_1/\Ker f|=\deg R_1/\deg f=\deg R_2$ by Lemma~\ref{degcomp}. 
Hence Proposition~\ref{KerfA} (1) implies that 
$\overline{A}_2$ is a maximal totally isotropic subspace  of $V_{R_2}$, and that the sequence 
\[0\to \Ker f\to\overline{A}_1\xrightarrow{f}\overline{A}_2\to0 \]
is exact. 
We fix a separable polynomial $F_2(x)\in\mathscr{A}$ such that $\Ker F_2=\overline{A}_2$. We define  
$F_1(x)\coloneqq F_2(f(x))$.
By the above exact sequence, we have $\Ker F_1=\overline{A}_1$. 
\begin{proposition}\label{221}
Let $t \in \F_q$ and 
let 
\[
R_{1,t}(x)\coloneqq R_1(x)+F_1^\ast(t)^2x, \qquad 
R_{2,t}(x)\coloneqq R_2(x)+F_2^\ast(t)^2 x.
\]
\begin{itemize}
\item[{\rm (1)}] 
There exists a unique polynomial $\Delta_t(x) \in \F_q[x]$ such that $\Delta_t(0)=0$ and 
\[
\Delta_t(x)^p+\Delta_t(x)=xR_{1,t}(x)+f(x)R_{2,t}(f(x)). 
\]
\item[{\rm (2)}] The morphism 
\[
\varphi_t \colon C_{R_{1,t}} \to C_{R_{2,t}}, \quad  
(x,y) \mapsto (f(x), y+\Delta_t(x)) 
\]
is a finite \'etale Galois covering 
with Galois group $\Ker f$. 
\end{itemize}
\end{proposition}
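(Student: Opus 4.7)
My plan for (1) is to produce $\Delta_t$ as a perturbation of the polynomial $\Delta$ from \eqref{das}, reducing the problem to an explicit Artin--Schreier equation. Writing $\Delta_t = \Delta + D$ and using \eqref{das} together with additivity of the Frobenius, the identity defining $\Delta_t$ becomes equivalent to
\[
  D(x)^p + D(x) = F_1^*(t)^2 x^2 + F_2^*(t)^2 f(x)^2, \qquad D(0) = 0,
\]
since $xR_{1,t}(x) - xR_1(x) = F_1^*(t)^2 x^2$ and $f(x) R_{2,t}(f(x)) - f(x) R_2(f(x)) = F_2^*(t)^2 f(x)^2$. The key algebraic input is the adjunction identity $F_1^* = f^* \circ F_2^*$, which I would derive from $F_1 = F_2 \circ f$ together with the general fact $(g \circ h)^* = h^* \circ g^*$ (itself following from the adjoint property $\Tr(x g(y)) = \Tr(g^*(x) y)$ noted in the excerpt). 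Setting $s \coloneqq F_2^*(t) \in \F_q$, so that $F_1^*(t) = f^*(s)$, additivity of squaring in characteristic $2$ rewrites the right-hand side as $(f^*(s) x + s f(x))^2$.

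The main step is the explicit construction of $D$. I would write $f(x) = \sum_{j=0}^{r} c_j x^{p^j}$ and expand:
\[
  f^*(s)^2 x^2 + s^2 f(x)^2 = \sum_{j=0}^{r} (c_j s)^{2 p^{-j}} x^2 + \sum_{j=0}^{r} (c_j s)^2 x^{2 p^j}.
\]
The $j=0$ contributions cancel since $(c_0 s)^2 x^2 + (c_0 s)^2 x^2 = 0$ in characteristic $2$, and for each $j \geq 1$ I would take
\[
  D_j(x) \coloneqq \sum_{k=0}^{j-1} (c_j s)^{2 p^{k-j}} x^{2 p^k} \in \F_q[x],
\]
which is well-defined because $\F_q$ is perfect. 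A direct check gives $D_j^p + D_j = (c_j s)^{2 p^{-j}} x^2 + (c_j s)^2 x^{2 p^j}$: the coefficient of $x^{2 p^k}$ in $D_j^p$ equals that in $D_j$ for $1 \leq k \leq j-1$, so the middle terms cancel in characteristic $2$, leaving only the two endpoints. Summing, $D \coloneqq \sum_{j=1}^{r} D_j$ solves the Artin--Schreier equation with $D(0) = 0$, and I set $\Delta_t \coloneqq \Delta + D$. Uniqueness is the standard Artin--Schreier fact: two solutions differ by an element of $\F_p$, forced to vanish by the normalization $\Delta_t(0) = 0$.

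For part (2), I would apply Lemma \ref{phif} to the data $(R_{1,t}, R_{2,t}, f, \Delta_t)$, which satisfies the hypotheses of Subsection \ref{R1R2} thanks to (1); since $\varphi_t$ coincides with the morphism $\phi_f$ of that lemma in this new setting, the lemma directly produces the finite \'etale Galois covering with Galois group $\Ker f$. The main obstacle is the solvability of the Artin--Schreier equation in $\F_q[x]$: a monomial $(c_j s)^2 x^{2 p^j}$ alone is generally not of the form $D^p + D$, and the point is that the matching low-degree term $(c_j s)^{2 p^{-j}} x^2$ supplied by $f^*(s)^2 x^2$ is exactly what enables a telescoping solution. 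This telescoping is the combinatorial shadow of the adjunction identity $F_1^* = f^* \circ F_2^*$, and explains why the twist $F_1^*(t)^2 x$ of $R_1$ is coupled to the twist $F_2^*(t)^2 x$ of $R_2$.
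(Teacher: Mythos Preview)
Your proof is correct and takes essentially the same approach as the paper: both reduce via the adjunction $F_1^\ast = f^\ast \circ F_2^\ast$ to the Artin--Schreier solvability of $F_1^\ast(t)^2 x^2 + F_2^\ast(t)^2 f(x)^2$, then invoke Lemma~\ref{phif} for part~(2). The paper is terser---it records $F_1^\ast(t)\,x \sim F_2^\ast(t)\,f(x)$ and squares---while you explicitly write out the telescoping solution $D = \sum_j D_j$, but the underlying argument is identical.
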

\begin{proof} 
(1) 
By Lemma~\ref{adjoint}, we have 
$F_1^\ast(t)x=f^\ast(F_2^\ast(t))x\sim F_2^\ast(t)f(x)$.
Therefore, we obtain 
\begin{align*}
0& \sim x R_1(x)+f(x)R_2(f(x)) \\
&=xR_{1,t}(x)+(F_1^\ast(t)x)^2+f(x)R_2(f(x)) \\
& \sim xR_{1,t}(x)+(F_2^\ast(t) f(x))^2+f(x) R_2(f(x)) \\
&=xR_{1,t}(x)+f(x) R_{2,t}(f(x)). 
\end{align*}

(2) It follows from (1) and Lemma~\ref{phif}. 
\end{proof}

From now on, we consider the situation in Proposition~\ref{choice}. 
Put 
\[
F_2(x)\coloneqq x^p+x, \qquad F(x) \coloneqq  F_2(f_{\oU}(x))=f_{\oU}(x)^p+f_{\oU}(x).
\]
Then $\Ker F_2=\F_p$ is maximal totally isotropic with respect to $\omega_S$ in $V_S=\F_{p^2}$ and $\overline{A}=\Ker F$ by Proposition~\ref{choice}. 
Note that $F_2^\ast(x)=x^{p^{-1}}+x$. 
For $t\in \F_q$, we set 
\[R_t(x) \coloneqq  R(x)+F^\ast(t)^2x,
\qquad S_t(x) \coloneqq  S(x)+(t^{p^{-1}}+t)^2x.\]
By applying Proposition~\ref{221} (1) to $(R,S,f,F_1)=(R_1,R_2,f_{\oU},F)$, there exists a polynomial $\Delta_t(x) \in \F_q[x]$ such that 
\begin{equation}\label{delaa}
\Delta_t(x)^p+\Delta_t(x)=x R_t(x)+f_{\oU}(x) S_t(f_{\oU}(x)). 
\end{equation}
\begin{lemma}\label{atc}
Let
\[
\pi_t \colon H_{R_t}\to V_{R_t}=V_R,\quad (a,b)\mapsto a,
\]
and
\[
A_t \coloneqq \pi_t^{-1}(\overline{A}).
\]
Then $A_t$ is a maximal abelian subgroup of $H_{R_t}$ with
$A_t \subset \F_q^2$.
\end{lemma}
\begin{proof} 
By Lemma~\ref{222}, $\overline{A}$
is maximal totally isotropic with respect to 
$\omega_{R_t}$. 
Hence $A_t$ is a maximal abelian subgroup of $H_{R_t}$.  

We show $A_t \subset \F_q^2$.   
Let $a \in \overline{A} \subset \F_q$. 
By the assumption $\pi^{-1}(\overline{A}) \subset \F_q^2$ and Lemma~\ref{Hilbert90}, it follows that 
$\Tr_{q/p}(aR(a))=0$. 
Thus, by Lemma~\ref{adjoint}, we obtain
\[
\Tr_{q/p}(a R_t(a))
=\Tr_{q/p}\big(aR(a)+(F^\ast(t)a)^2\big)
=\Tr_{q/p}\big(tF(a)\big)^2=0,
\]
where the last equality follows from $\overline{A}=\Ker F$.
By Lemma~\ref{Hilbert90}, the claim follows. 
\end{proof}
We may apply Proposition~\ref{choice} to $(R,S,A,\Delta)=(R_t,S_t,A_t,\Delta_t)$ by \eqref{delaa} and Lemma~\ref{atc}.

Let $\xi_{W_2} \in W_2(\F_p)^{\vee}$ be a faithful character and let $\xi_{\oU} \in \oU^{\vee}$. Choose $\alpha\in \F_q$ as in Lemma~\ref{kk} (1) and $\beta_{\xi}\in\F_q$ as in \eqref{bxi}. 
Observe that 
$\oU=\{a \in \overline{A} \mid 
f_{R_t}(a,a)=0\}$ by Lemma~\ref{222}. By \eqref{exRS}, we have 
a commutative diagram of exact sequences 
\[
    \xymatrix{
0\ar[r]&\oU\ar[d]^-{{\rm id}_{\oU}}\ar[r]&A_t\ar[d]^-{\pi_t}\ar[r]& A_{S_t}\ar[d]^-{\pi_{S_t}}\ar[r]&0\\
0\ar[r]&\oU\ar[r]^-{i}&\overline{A}\ar[r]^-{f_{\oU}}& \F_p\ar[r]&0. 
}
\] 
We take a homomorphism $r \colon \overline{A} \to \oU$ such that $r \circ i=\id_{\oU}$
as in \eqref{retAU}. 
We write $S(x)=x^p+s_0 x$. Since $S_t(x)=x^p+((t^{p^{-1}}+t)^2+s_0) x$ and $\a^{p^{-1}}+\a=s_0+1$ as in Lemma~\ref{kk} (1), we have 
\[
(t^2+\a)^{p^{-1}}+(t^2+\a)=(t^{p^{-1}}+t)^2+s_0+1.
\]
Hence, as in \eqref{AUW}, the homomorphisms
$r \circ \pi_t \colon A_t \to \oU$ and
$f_{(t^2+\a)^{p^{-1}}}$ (see \eqref{aw}) 
together induce an isomorphism
\[
A_t \xrightarrow{\sim} W_2(\F_p) \times \oU.
\]
 We regard $\xi_{W_2} \boxtimes \xi_{\oU}$
as a character of $A_t$ via the isomorphism 
$A_t \simeq W_2(\F_p) \times 
\oU$, which we denote by $\xi_t$. 

The second main theorem of this paper, stated below, compares the Frobenius eigenvalues of $\overline{C}_{R_t}$ and $\overline{C}_{R_0}$.
\begin{theorem}\label{qppt}
\begin{itemize}
\item[{\rm (1)}] 
We have an isomorphism of $\overline{\Q}_{\ell}$-sheaves
\[ \mathcal{Q}_{\xi_t} \simeq 
\mathcal{L}_{\xi_{W_2}}(s,(t^2+\a) s^2+\beta_{\xi}s). \] 
\item[{\rm (2)}] 
Assume that 
$\xi_{W_2}=\xi_2 \circ \Tr_{(a_{W_2},b_{W_2})}$,
where $(a_{W_2},b_{W_2}) \in W_2(\F_p)$ and $a_{W_2} \neq 0$.
For a positive integer $n$ such that 
$\F_q \subset \F_{2^n}$, we have 
\[
\tau_{R_t,\xi_t,2^n}=\xi_{2^n}(t,t^2+ct)\cdot \tau_{R_0,\xi_0,2^n},
\]
where $c \coloneqq (\a+(b_{W_2}/a_{W_2}^2))^{1/2}+a_{W_2} \beta_{\xi} \in \F_q$. 
\end{itemize}
\end{theorem}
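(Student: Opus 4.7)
The plan is to apply Proposition~\ref{qpp} and Theorem~\ref{qppc} directly to the twisted curve $C_{R_t}$ with its maximal abelian subgroup $A_t$, and to compare the resulting formulas with those for $C_{R_0}=C_R$. The point is that almost every piece of data entering these results is insensitive to the twist $R\leadsto R_t$; only the parameter $\alpha$ changes, and in a transparent way.

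For Part~(1), Lemma~\ref{222} shows that $E_R$, $f_R$, and $\omega_R$ are unchanged by the twist. Hence the polynomial $f_{\oU}$ from Proposition~\ref{choice}, the polynomial $F=f_{\oU}^p+f_{\oU}$, and the map $a(x)$ from Lemma~\ref{aF=Fa} can all be chosen identical for $R$ and $R_t$. The associated polynomial $S_t(x)=x^p+s_{0,t}x$ has linear coefficient $s_{0,t}=s_0+(t^{p^{-1}}+t)^2$, and a direct computation shows that $\alpha_t\coloneqq \alpha+t^2$ satisfies $\alpha_t^{p^{-1}}+\alpha_t=s_{0,t}+1$, so $\alpha_t$ plays the role of $\alpha$ for $R_t$. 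Using the same retraction $r\colon\overline{A}\to\oU$ and the same character $\xi_{\oU}$ as for $R$, the scalar $\beta_{\xi_t}$ defined by \eqref{bxi} coincides with $\beta_\xi$. Substituting $\alpha\mapsto \alpha+t^2$ in Proposition~\ref{qpp} applied to $R_t$ then yields the asserted isomorphism.

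For Part~(2), combining Part~(1) with Lemma~\ref{plf}(2) and Lemma~\ref{HDGr}, valid over any $\F_{2^n}\supset \F_q$, gives
\[
\tau_{R_t,\xi_t,2^n}=\xi_{2^n}(c_{R_t,\xi_t},0)^{-1}\cdot (-1-\sqrt{-1})^n,
\]
with $c_{R_t,\xi_t}=(\alpha_t+b_{W_2}/a_{W_2}^2)^{1/2}+a_{W_2}\beta_\xi$. Since taking square roots is additive in characteristic $2$, this simplifies to $c_{R_t,\xi_t}=c+t$. Dividing by the analogous formula for $\tau_{R_0,\xi_0,2^n}$ reduces the theorem to the Witt-vector identity
\[
(c,0)-(c+t,0)=(t,\,t^2+ct) \quad \text{in } W_2(\F_{2^n}),
\]
which follows from $-(x,0)=(x,x^2)$ and the Witt addition rule $(x_1,y_1)+(x_2,y_2)=(x_1+x_2,\,y_1+y_2+x_1x_2)$; the terms $2ct$ and $2c^2$ that appear in the intermediate computation vanish in characteristic~$2$.

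The main subtlety here is not really an obstacle but the bookkeeping needed to confirm that every invariant other than $\alpha$ is insensitive to the twist. This relies crucially on Lemma~\ref{222} and on the fact that $f_{\oU}$, $r$, and $a$ depend only on $\overline{A}$. Once this transparency is established, the proof reduces to the substitution $\alpha\mapsto \alpha+t^2$ together with the short Witt-vector identity above.
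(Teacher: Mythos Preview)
Your proof is correct and follows essentially the same route as the paper: apply Proposition~\ref{qpp} to $R_t$ with $\alpha$ replaced by $\alpha+t^2$, then compare Frobenius eigenvalues via the Witt-vector identity $(c,0)-(c+t,0)=(t,t^2+ct)$. Your write-up is in fact more explicit than the paper's, which simply cites Proposition~\ref{qpp} and Theorem~\ref{qppc} without spelling out that $\alpha_t=\alpha+t^2$ and $\beta_{\xi_t}=\beta_\xi$; your direct appeal to Lemma~\ref{plf}(2) (valid for arbitrary $2^n\supset q$) is also slightly cleaner than invoking Theorem~\ref{qppc}, which is only stated for $q$.
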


\begin{proof}
(1) 
The assertion follows from Proposition~\ref{qpp} by taking $t+\a$ for $\a$.

(2) 
By Theorem~\ref{qppc}, we obtain 
\[
\tau_{R_t,\xi_t,2^n}=\xi_{2^n}(t+c,0)^{-1} \cdot (-1-\sqrt{-1})^n.
\]
Therefore, the assertion follows since 
$(c,0)-(t+c,0)=(t,t^2+ct)$. 
\end{proof}

\subsection{Maximal twists of van der Geer--van der Vlugt curves}

As an application of our results,
we show how to obtain a maximal van der Geer--van der Vlugt curve
by twisting a minimal one.

Let $R(x)\in\mathscr{A}$ be an $\F_p$-linearized polynomial of degree $p^e$ with $e\geq1$. 
Let $A$ be a maximal abelian subgroup of $H_R$ 
such that $A \subset \F_q^2$. 
Let $\overline{A}\coloneqq \pi(A)$ and 
$\oU\coloneqq \{x \in \overline{A} \mid f_R(x,x)=0\}$. 
We take a separable polynomial $f_{\oU}(x) \in \mathscr{A}$ as in Proposition~\ref{choice}. 
Let $F(x)\coloneqq f_{\oU}(x)^p+f_{\oU}(x)$. 
We write $F(x)=\sum_{i=0}^e b_i x^{p^i}$. 
For $t\in \F_q$, we set 
\[
R_t(x) \coloneqq R(x)+F^\ast (t)^2 x \quad 
\textrm{with $F^\ast(t)=\sum_{i=0}^e (b_i t)^{p^{-i}}$}.
\]

\begin{theorem}\label{main}
Let $t \in \F_q$ be an element such that $\Tr_{q/2}(t)=1$.
\begin{itemize}
\item[{\rm (1)}]
Assume that $A \subset \F_q^2$ and 
$\overline{C}_R$ is $\F_{q^2}$-minimal.  
Then $\overline{C}_{R_t}$ is $\F_{q^2}$-maximal. 
\item[{\rm (2)}] 
Assume $H_R \subset \F_q^2$. 
Then $\overline{C}_{R_t}$ is $\F_{q^2}$-maximal. 
\end{itemize}
\end{theorem}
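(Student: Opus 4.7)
The strategy is to apply Theorem \ref{qppt} (2) to each Frobenius eigenvalue of $\overline{C}_R$ over $\F_{q^2}$ and show that the scalar factor $\xi_{q^2}(t, t^2 + ct)$ equals $-1$ whenever $\Tr_{q/2}(t) = 1$. The main technical point is a computation of the Witt-vector trace, which depends only on $\Tr_{q/2}(t)$ and not on the character $\xi$.

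\textbf{Step 1 (Scalar factor computation).} Set $f_0 \coloneqq [\F_q : \F_2]$ and note that $c \in \F_q$ and $(t, t^2+ct) \in W_2(\F_q)$. Using $\Tr_{q^2/2}^{W_2} = \Tr_{q/2}^{W_2} \circ \Tr_{q^2/q}^{W_2}$, first compute
\[
  \Tr_{q^2/q}^{W_2}(t,\, t^2+ct) = (t, t^2+ct) + (t, t^2+ct) = (0, t^2)
\]
by the definition of addition in $W_2$ in characteristic $2$. Then
\[
  \Tr_{q/2}^{W_2}(0, t^2) = \Bigl(0,\, \sum_{i=0}^{f_0-1} t^{2^{i+1}}\Bigr) = \bigl(0,\, \Tr_{q/2}(t)^2\bigr) = \bigl(0,\, \Tr_{q/2}(t)\bigr),
\]
the last equality since $\Tr_{q/2}(t) \in \F_2$. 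Hence
\[
  \xi_{q^2}(t, t^2+ct) = \xi_2\bigl(0, \Tr_{q/2}(t)\bigr) = (-1)^{\Tr_{q/2}(t)},
\]
because $\xi_2(0,1) = \xi_2(1,0)^2 = (\sqrt{-1})^2 = -1$. Crucially, this value is independent of $c$, hence independent of $\xi$.

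\textbf{Step 2 (Proof of (1)).} By Lemma \ref{atc}, the hypothesis $A \subset \F_q^2$ implies $A_t \subset \F_q^2$. Since $\overline{C}_R$ is $\F_{q^2}$-minimal, the decomposition from Subsection \ref{Section:Review} gives $\tau_{R,\xi,q^2} = q$ for every $\psi \in \F_p^\vee \setminus\{1\}$ and every $\xi \in A_\psi^\vee$. Taking $n = 2f_0$ in Theorem \ref{qppt} (2) together with Step 1 and $\Tr_{q/2}(t) = 1$ yields
\[
  \tau_{R_t,\xi_t,q^2} = -\tau_{R,\xi,q^2} = -q
\]
for every such $\xi$. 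The bijection $\xi \leftrightarrow \xi_t$ induced by the splittings $A \simeq W_2(\F_p) \times \oU \simeq A_t$ shows that $\{\xi_t\}$ ranges over all of $(A_t)_\psi^\vee$, so by the product formula for the $L$-polynomial,
\[
  L_{\overline{C}_{R_t}/\F_{q^2}}(T) = \prod_{\psi,\,\xi_t} (1 - \tau_{R_t,\xi_t,q^2} T) = (1 + qT)^{2g},
\]
where $g = p^e(p-1)/2$ is the common genus of $\overline{C}_R$ and $\overline{C}_{R_t}$. This proves $\overline{C}_{R_t}$ is $\F_{q^2}$-maximal.

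\textbf{Step 3 (Proof of (2)).} We reduce to (1) by verifying that $H_R \subset \F_q^2$ forces $\overline{C}_R$ to be $\F_{q^2}$-minimal. By Lemma \ref{Stone} (3) the geometric Frobenius $\mathrm{Fr}_q$ acts on $H^1(\overline{C}_{R,\F}, \overline{\Q}_\ell)$ as the scalar $\tau_{R,\xi,q}$, so $\mathrm{Fr}_{q^2}$ acts as $\tau_{R,\xi,q}^2$. By Corollary \ref{qaq2} (1) and the identity $(-1-\sqrt{-1})^2 = 2\sqrt{-1}$,
\[
  \tau_{R,\xi,q}^2 = \xi_q(\alpha,0)^{-2} \cdot (2\sqrt{-1})^{f_0} = q \cdot \xi_q(\alpha,0)^{-2} \cdot (\sqrt{-1})^{f_0}.
\]
Corollary \ref{qaq2} (2) gives that $f_0$ is even and $\Tr_{q/2}(\alpha) = f_0/2$ in $\F_2$; the same Witt-trace calculation as in Step 1 yields $\xi_q(\alpha,0)^{-2} = \xi_2(0, \Tr_{q/2}(\alpha))^{-1} = (-1)^{f_0/2}$, and $(\sqrt{-1})^{f_0} = (-1)^{f_0/2}$. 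Therefore $\tau_{R,\xi,q^2} = q$ for all $\xi$, so $\overline{C}_R$ is $\F_{q^2}$-minimal, and part (1) applies.

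The only non-routine step is the Witt-vector trace computation in Step 1; the key takeaway is that the scalar factor collapses to $(-1)^{\Tr_{q/2}(t)}$ because the Verschiebung part of $(t, t^2+ct)$ is killed by two applications of Frobenius over $\F_q$.
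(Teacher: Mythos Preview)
Your proof is correct and, for part~(1), essentially identical to the paper's: both apply Theorem~\ref{qppt}(2) and reduce the scalar $\xi_{q^2}(t,t^2+ct)$ to $(-1)^{\Tr_{q/2}(t)}$ via the Witt-vector doubling $(t,\ast)+(t,\ast)=(0,t^2)$; your write-up is simply more explicit about the intermediate trace $\Tr^{W_2}_{q^2/q}$ and about why $\xi_t$ exhausts $(A_t)^\vee_\psi$.

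For part~(2) you take a slightly different route. The paper invokes \cite[Theorem~1.2~(3)]{TT} directly to assert that $H_R\subset\F_q^2$ implies $\overline{C}_R$ is $\F_{q^2}$-minimal, and then applies part~(1). You instead give a self-contained derivation from the present paper's own Corollary~\ref{qaq2}: squaring the eigenvalue from Corollary~\ref{qaq2}(1) and using the parity constraints of Corollary~\ref{qaq2}(2) to see that $\xi_q(\alpha,0)^{-2}\cdot(\sqrt{-1})^{f_0}=(-1)^{f_0/2}\cdot(-1)^{f_0/2}=1$, hence $\tau_{R,\xi,q^2}=q$. This avoids the external citation and makes the argument internal to the paper, at the cost of redoing a computation that \cite{TT} already packages. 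Either way the logic is the same once minimality is established.
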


\begin{proof}
(1) 
We apply Theorem~\ref{qppt} (2). 
Let $\xi_{W_2}\in W_2(\F_p)^\vee$ be any faithful character, and let $\xi_{\oU}\in \oU^\vee$ be any character.
 We write $\xi_t$ for the character of $A_t$ corresponding to $\xi_{W_2}\boxtimes \xi_{\oU}$. 
By Theorem~\ref{qppt} (2), 
\[\tau_{R_t,\xi_t,q^2}=\xi_{q^2}(t,t^2+ct)\cdot\tau_{R_{0},\xi_0,q^2}. 
\]
By the assumption and $R=R_0$, 
the eigenvalue $\tau_{R_{0},\xi_0,q^2}$ equals $q$.
Since $(t,t^2+ct)\in W_2(\F_q)$ and $\Tr_{q/2}(t)=1$, we have
\[
\xi_{q^2}(t,t^2+ct)
=\xi_q(2 (t,t^2+ct))
=\xi_q(0,t^2)
=(-1)^{\Tr_{q/2}(t)}=-1.
\]
This proves the claim.

(2) By \cite[Theorem~1.2 (3)]{TT}, 
if $H_R \subset \F_q^2$, the curve 
$\overline{C}_R$ is $\F_{q^2}$-minimal. 
Hence the claim follows from (1). 
\end{proof}
Since finite quotients of maximal curves are maximal,
Theorem~\ref{main} implies the following conclusion.
\begin{corollary}
Let the notation and assumptions be as in Theorem~\ref{main} (2). 
\begin{itemize}
\item[{\rm (1)}] 
Let $s(x) \in \F_p[x]$ be an $\F_2$-linearized polynomial 
dividing $x^p+x$. 
Then the smooth compactification  of the affine curve over 
$\F_q$ defined by $s(y)=x R_t(x)$ is $\F_{q^2}$-maximal.
\item[{\rm (2)}] 
For any subgroup $V \subset \oU$, the smooth compactification of the quotient $C_{R_t}/V$ 
is $\F_{q^2}$-maximal. 
\end{itemize}
\end{corollary}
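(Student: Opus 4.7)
The plan is to derive both statements from the $\F_{q^2}$-maximality of $\overline{C}_{R_t}$ established in Theorem \ref{main} (2), via the following general principle: if $\overline{C}$ is an $\F_{q^2}$-maximal smooth projective geometrically connected curve over $\F_q$ and $f\colon \overline{C}\to \overline{D}$ is a finite surjective $\F_q$-morphism to another such curve $\overline{D}$, then $\overline{D}$ is $\F_{q^2}$-maximal. I would justify this by noting that $\F_{q^2}$-maximality of $\overline{C}$ is equivalent to $\mathrm{Fr}_{q^2}$ acting as multiplication by $-q$ on $H^1(\overline{C}_\F,\overline{\Q}_\ell)$, and the pullback $f^*\colon H^1(\overline{D}_\F,\overline{\Q}_\ell)\hookrightarrow H^1(\overline{C}_\F,\overline{\Q}_\ell)$ is a Frobenius-equivariant injection. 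Thus it remains to exhibit $\overline{D}_s$ and the smooth compactification of $C_{R_t}/V$ as finite $\F_q$-quotients of $\overline{C}_{R_t}$.

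For part (1), I would work in the twisted polynomial ring of $\F_2$-linearized polynomials with $\F_p$-coefficients. Since $x^p+x$ corresponds to a central element in this ring (as the Frobenius $F^{f_0}$ commutes with elements of $\F_p$), the divisibility hypothesis is unambiguous and yields an $\F_2$-linearized polynomial $u(x)\in \F_p[x]$ with $s(u(x))=x^p+x$. Then $(x,y)\mapsto (x,u(y))$ defines an $\F_q$-morphism $C_{R_t}\to D_s$, since from $y^p+y=xR_t(x)$ we obtain $s(u(y))=xR_t(x)$. This morphism is a Galois covering with group $\ker s\subset \F_p\subset \F_q$ acting by translation on the second coordinate, and it extends to a finite $\F_q$-morphism of smooth compactifications, at which point the general principle concludes the argument.

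For part (2), I would use the splitting $A_t\simeq W_2(\F_p)\times \oU$ from \eqref{AUW} to view $V\subset \oU$ as a subgroup of the maximal abelian subgroup $A_t\subset H_{R_t}$. By Lemma \ref{atc} (2), we have $A_t\subset \F_q^2$, so the Heisenberg action of $V$ on $C_{R_t}$ is defined over $\F_q$; consequently the quotient $C_{R_t}/V$ is a smooth affine curve over $\F_q$, and the induced morphism to its smooth compactification is a finite $\F_q$-morphism from $\overline{C}_{R_t}$. The general principle then gives $\F_{q^2}$-maximality. The main obstacle I foresee is primarily conceptual: ensuring that the divisibility in (1) yields an honest morphism over $\F_q$ (which reduces to the fact that $\ker s\subset \F_p$) and that in (2) the action of $V$ through the Heisenberg group preserves the $\F_q$-structure (which reduces to $A_t\subset \F_q^2$); once these rationality checks are made, the conclusion is immediate.
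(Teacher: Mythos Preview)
Your proposal is correct and follows essentially the same approach as the paper: exhibit $\overline{D}_s$ and $\overline{C_{R_t}/V}$ as targets of finite $\F_q$-morphisms from $\overline{C}_{R_t}$, then invoke the $\F_{q^2}$-maximality of $\overline{C}_{R_t}$ from Theorem \ref{main} (2) together with the standard fact that maximality descends along finite covers. You supply more detail than the paper (the Frobenius-eigenvalue justification of the descent principle, the centrality of $x^p+x$ in the twisted polynomial ring, and the rationality check $A_t\subset\F_q^2$ via Lemma \ref{atc}), but the argument is the same; note that for (2) you do not actually need the splitting \eqref{AUW}, since $\oU$ is already the kernel of $A_t\to A_{S_t}$ and hence a subgroup of $A_t$.
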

\begin{proof}
(1) 
Let $D_s$ be the affine curve over 
$\F_q$ defined by $s(y)=x R_t(x)$. 
There exists an $\F_2$-linearized polynomial 
$s_1(x) \in \F_p[x]$ such that $x^p+x=s(s_1(x))$. 
This yields a finite \'etale morphism $C_{R_t} \to D_s,\ (x,y) \mapsto (x,s_1(y))$, which extends to a finite morphism $\overline{C}_{R_t}
\to \overline{D}_s$.
Therefore, the claim follows from Theorem~\ref{main} (2). 

(2) 
This follows from Theorem~\ref{main} (2).
\end{proof}

As a corollary of Theorem~\ref{main}, we give explicit examples of maximal curves.

\begin{corollary}
\label{maximalexample}
Let $n$ be a positive even integer and 
$q=p^{4n}$. 
Let $t \in \F_q$ be an element such that $\Tr_{q/2}(t)=1$.
We define 
\[
R_t(x) \coloneqq \sum_{i=1}^{n/2} x^{p^{2i-1}}+\left(\sum_{i=0}^{n-1} t^{p^{-i}}\right)^2 x.
\]
Then the curve $\overline{C}_{R_t}$ 
is $\F_{q^2}$-maximal.  
\end{corollary}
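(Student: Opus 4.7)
The plan is to apply Theorem \ref{main}(2) to $R(x) = \sum_{i=1}^{n/2} x^{p^{2i-1}}$, setting up a maximal abelian subgroup $A \subset H_R$ for which the polynomial $F$ from Proposition \ref{choice} equals $\sum_{i=0}^{n-1} x^{p^i}$ (so that $F^*(t) = \sum_{i=0}^{n-1} t^{p^{-i}}$ matches the statement). The two required verifications are (a) $H_R \subset \F_q^2$, and (b) this specific $F$ arises from Proposition \ref{choice}.

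For (a), using formula \eqref{-aa}, a direct computation yields $E_R(x) = \sum_{j=0}^{n-1} x^{p^{2j}}$, so $V_R = \{x \in \F_{p^{2n}} : \Tr_{p^{2n}/p^2}(x) = 0\} \subset \F_{p^{2n}}$. For $a \in V_R$, the product $aR(a) \in \F_{p^{2n}}$, hence $\Tr_{q/p^{2n}}(aR(a)) = 2\,aR(a) = 0$ in characteristic $2$, so $\Tr_{q/p}(aR(a)) = 0$. This gives solvability of $b^p + b = aR(a)$ in $\F_q = \F_{p^{4n}}$, proving $H_R \subset \F_q^2$.

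For (b), I choose $f_{\oU}(x) = \sum_{i=0}^{n/2-1} x^{p^{2i}}$, a separable $\F_p$-linearized polynomial, so that $f_{\oU}^p + f_{\oU} = \sum_{i=0}^{n-1} x^{p^i}$, which I denote by $F$. For $a \in \overline{A} := \ker F$, one has $f_{\oU}(a) = \Tr_{p^n/p^2}(a)$, which lies in $\F_p$ precisely when $\Tr_{p^n/p}(a) = 0$; hence $f_{\oU}(\overline{A}) = \F_p$, the normalization required in the proof of Proposition \ref{choice}. It remains to exhibit $\Delta(x) \in \F_q[x]$ with
\[\Delta(x)^p + \Delta(x) = xR(x) + f_{\oU}(x)^{p+1},\]
which corresponds to the factorization in Proposition \ref{choice} with $S(x) = x^p$. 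Expanding, $xR(x) = \sum_{i=1}^{n/2} x^{p^{2i-1}+1}$ and $f_{\oU}(x)^{p+1} = \sum_{i,j=0}^{n/2-1} x^{p^{2i}+p^{2j+1}}$; the $i=0$ column of the double sum is exactly $xR(x)$, so these cancel in characteristic $2$. The residue $\sum_{i=1}^{n/2-1}\sum_{j=0}^{n/2-1} x^{p^{2i}+p^{2j+1}}$ is analyzed using $x^a \sim x^{pa}$, which induces the shift $\sigma(i,j) = (j+1, i)$ on index pairs. A direct count shows that for $n$ even, the orbit beginning at $(i_0, 0)$ has even length $2(n/2 - i_0)$, so every orbit contributes $0$ in characteristic $2$; hence $xR(x) + f_{\oU}(x)^{p+1} \sim 0$, producing the desired $\Delta$.

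With the factorization in hand, Proposition \ref{KerfA}(3), applied to the standard maximal abelian subgroup $A_S = \pi_S^{-1}(\F_p) \subset H_S$, provides a maximal abelian subgroup $A = \psi_{f_{\oU}}^{-1}(A_S) \subset H_R$ with $\pi(A) = \overline{A} = \ker F$ and $A \subset \F_q^2$. The self-consistency $\oU = \{a \in \overline{A} : f_R(a,a) = 0\} = \ker f_{\oU}$ follows from Lemma \ref{Kerf}(3), since for $a \in \overline{A}$, $f_R(a,a) = f_S(f_{\oU}(a), f_{\oU}(a)) = f_{\oU}(a)^{p+1} = f_{\oU}(a)^2$. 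Thus Theorem \ref{main}(2) applies and yields the $\F_{q^2}$-maximality of $\overline{C}_{R_t}$. The main obstacle is the orbit-length computation in (b): the hypothesis that $n$ is even is used exactly to ensure all orbits have even size.
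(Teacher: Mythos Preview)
Your proof is correct and follows essentially the same strategy as the paper: verify $H_R\subset\F_q^2$ and the hypotheses of Proposition~\ref{choice} for the explicit choice $f_{\oU}(x)=\sum_{i=0}^{n/2-1}x^{p^{2i}}$, then invoke Theorem~\ref{main}(2). The paper packages these verifications into Lemma~\ref{3.3}.

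Two minor differences are worth noting. First, for the key identity $xR(x)\sim f_{\oU}(x)^{p+1}$, the paper (Lemma~\ref{3.3}(4)) splits the double sum according to $i<j$ versus $j\le i$, reduces each term modulo~$\sim$ to the form $x^{p^{2k-1}+1}$, and counts multiplicities; your orbit argument under $\sigma(i,j)=(j+1,i)$ is an equally valid and arguably cleaner bookkeeping device, since $\sigma^2(i,j)=(i+1,j+1)$ makes the chain lengths transparent. Second, instead of verifying directly that $\overline{A}=\Ker F$ is totally isotropic (the paper does this in Lemma~\ref{3.3}(2) via an auxiliary identity), you first establish the factorization through $C_S$ and then pull back the maximal abelian subgroup $A_S\subset H_S$ via Proposition~\ref{KerfA}(3); this reorders the logic but reaches the same conclusion. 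Your closing remark that ``$n$ even is used exactly to ensure all orbits have even size'' is slightly off: the orbit length $2(n/2-i_0)$ is automatically even once $n/2$ is an integer, so the evenness of $n$ is already needed to define $R$, not specifically for the parity count.
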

\begin{proof}
The claim follows from Lemma~\ref{3.3} below by the following argument.
We define 
\[
R(x) \coloneqq \sum_{i=1}^{n/2} x^{p^{2i-1}}, \qquad 
f_{\oU}(x) \coloneqq \sum_{i=0}^{(n/2)-1}
x^{p^{2i}}, \qquad 
F(x) \coloneqq \sum_{i=0}^{n-1} x^{p^i}.
\]
Then all the conclusions of Proposition~\ref{choice}
are satisfied by Lemma~\ref{3.3} (2)--(5). 
Moreover, we have $H_R \subset \F_q^2$ by Lemma~\ref{3.3} (1). 
Therefore, all the assumptions in Theorem~\ref{main} (2)
are satisfied. 
Since
$F^{\ast}(t) = \sum_{i=0}^{n-1} t^{p^{-i}}$ and 
$R_t(x)=R(x)+F^\ast(t)^2 x$, 
the claim follows from Theorem~\ref{main} (2).
\end{proof}

\begin{lemma}\label{3.3}
Let $n$ be a positive even integer and 
$q=p^{4n}$. 
Let $R(x)\coloneqq \sum_{i=1}^{n/2} x^{p^{2i-1}}$. 
\begin{itemize}
\item[{\rm (1)}] We have  
$E_R(x)=\sum_{i=0}^{n-1} x^{p^{2i}}$, $V_R=\Ker \Tr_{p^{2n}/p^2} \subset \F_{p^{2n}}$ and $H_R \subset \F_q^2$. 
\item[{\rm (2)}]
Let $F(x)\coloneqq \sum_{i=0}^{n-1} x^{p^i}$. 
The subspace 
\[
\overline{A}\coloneqq \Ker \Tr_{p^n/p}
=\Ker F
\subset V_R
\]
 is a maximal totally isotropic subspace with respect to 
$\omega_R$. 
\item[{\rm (3)}] 
Let 
$\oU\coloneqq \Ker \Tr_{p^n/p^2}$ and 
$f_{\oU}(x)\coloneqq \sum_{i=0}^{(n/2)-1}
x^{p^{2i}}$. 
Then we have 
$\oU=\Ker f_{\oU} \subset \overline{A}$ and 
$f_{\oU}(x)^p+f_{\oU}(x)=
F(x)$. 
\item[{\rm (4)}]
There exists a unique polynomial $\Delta(x) \in \F_q[x]$ such that $\Delta(0)=0$ and 
\[
\Delta(x)^p+\Delta(x)=x R(x)+f_{\oU}(x)^{p+1}. 
\]
\item[{\rm (5)}] 
We have $\oU=\Ker (\overline{A} \to 
\F_p,\ a \mapsto f_R(a,a))$. 
\item[{\rm (6)}]
Let $S(x) \coloneqq x^p$. 
Then the morphism 
\[
C_R \to C_S, \quad (x,y) \mapsto (f_{\oU}(x),y+\Delta(x)) 
\] 
is a finite \'etale Galois covering with Galois group $\oU$. 
\end{itemize}
\end{lemma}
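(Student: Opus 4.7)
The plan is to verify the six parts roughly in the order (1), (3), the set-theoretic portions of (2), then (4), and finally (6), the isotropy claim in (2), and (5). Parts (1), (3), and the set-theoretic claims in (2) reduce to polynomial identifications with trace maps on finite fields; part (4) requires a combinatorial Artin--Schreier computation; the remaining claims follow via Lemma~\ref{phif} and Proposition~\ref{KerfA}.

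For (1), computing $E_R(x)$ via \eqref{-aa} yields $\sum_{j=0}^{n-1}x^{p^{2j}}$: the part $R(x)^{p^{n-1}}$ contributes $\sum_{j=0}^{n/2-1}x^{p^{n+2j}}$ and the sum $\sum_i(a_ix)^{p^{n-1-i}}$ (nonzero only for odd $i$) contributes $\sum_{j=0}^{n/2-1}x^{p^{2j}}$. This polynomial coincides with $\Tr_{p^{2n}/p^2}$ on $\F_{p^{2n}}$, and matching degrees gives $V_R=\Ker\Tr_{p^{2n}/p^2}$. For $H_R\subset\F_q^2$, given $a\in V_R\subset\F_{p^{2n}}$, any root $b$ of $b^p+b=aR(a)$ satisfies $b\in\F_{p^{4n}}=\F_q$, by iterating Frobenius and using characteristic~$2$. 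Part (3) is analogous: $f_{\oU}(x)=\Tr_{p^n/p^2}$ on $\F_{p^n}$, so $\Ker f_{\oU}=\oU$ by matching cardinalities, the identity $f_{\oU}^p+f_{\oU}=F$ telescopes, and $\oU\subset\overline{A}$ follows from $\Tr_{p^n/p}=\Tr_{p^2/p}\circ\Tr_{p^n/p^2}$. For the set-theoretic part of (2), $F|_{\F_{p^n}}=\Tr_{p^n/p}$ gives $\Ker F=\Ker\Tr_{p^n/p}\subset\F_{p^n}$, and $\F_{p^n}\subset V_R$ because $\Tr_{p^{2n}/p^n}(x)=x+x^{p^n}=0$ on $\F_{p^n}$ in characteristic~$2$.

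The heart of the proof is (4). Expanding $f_{\oU}(x)^{p+1}=\sum_{i=0}^{n/2-1}x^{p^{2i}}f_{\oU}(x)^p$, the $i=0$ term equals $xR(x)$, so it cancels in characteristic~$2$ and we reduce to verifying
\[
\sum_{i=1}^{n/2-1}\sum_{j=0}^{n/2-1} x^{p^{2i}+p^{2j+1}}\sim 0.
\]
Iterating $g^p\sim g$ in the form $x^{p^a+p^b}\sim x^{p^{a-c}+p^{b-c}}$ for $c\leq\min(a,b)$, each term reduces to $x^{1+p^{|2i-(2j+1)|}}$. Counting shows that each odd $k\in[1,n-3]$ appears with multiplicity $n-k-1$, which is even since $n$ is even and $k$ is odd; the reduced sum thus vanishes in characteristic~$2$, giving (4). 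Uniqueness with $\Delta(0)=0$ follows because any $g\in\F_q[x]$ with $g^p+g=0$ is a constant in $\F_p$.

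Once (4) is established, (6) follows from Lemma~\ref{phif}(2) (using $\oU\subset\F_q$), and the maximal isotropy in (2) follows from Proposition~\ref{KerfA}(3) applied with $R_1=R$, $R_2=S=x^p$, $f=f_{\oU}$ (whose hypothesis \eqref{das} is exactly (4)) and the maximal abelian subgroup $A_S=\pi_S^{-1}(\F_p)\subset H_S$: the preimage $\psi_{f_{\oU}}^{-1}(A_S)$ is maximal abelian in $H_R$, its image under $\pi$ has order $p^{n-1}$ and is contained in $\overline{A}$, hence equals $\overline{A}$ by cardinality, and so $A=\pi^{-1}(\overline{A})$ is maximal abelian, giving the isotropy via Lemma~\ref{basic}(3). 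For (5), the Galois group $\oU$ of $\phi_{R,S}$ has exponent~$2$, so each lift $(u,b_u)\in A$ of $u\in\oU$ satisfies $(u,b_u)^2=(0,f_R(u,u))=(0,0)$ in $H_R$, forcing $f_R(u,u)=0$; and the equality $\oU=\{a\in\overline{A}:f_R(a,a)=0\}$ follows by dimension, since $A\simeq\oU\times A_S$ with $A_S\simeq W_2(\F_p)$ of exponent~$4$ (Lemma~\ref{kk}(2)) forces the existence of some $a\in\overline{A}$ with $f_R(a,a)\neq 0$. The main obstacle is (4): the combinatorial bookkeeping tracking residual exponents and verifying that each reduced term $x^{1+p^k}$ has even multiplicity.
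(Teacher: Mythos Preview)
Your proof is correct, and for parts (1), (3), (4), (6) you proceed essentially as the paper does---in particular your organization of (4) (peeling off the $i=0$ row as $xR(x)$, then counting residual multiplicities) is a minor repackaging of the paper's direct reduction of $f_{\oU}(x)^{p+1}$.

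The genuine difference is in (2). The paper proves the isotropy of $\overline{A}$ directly: it introduces an auxiliary polynomial $d_t(x)$ with $d_t^p+d_t=tF(x)+F^\ast(t)x$, substitutes $F(x)$ for $x$ and $t=a^{p^e}$, and compares with \eqref{omega} to obtain $\omega_R(x,a)=d_{a^{p^e}}(F(x))$, which visibly vanishes on $\Ker F=\overline{A}$. You instead deduce (2) from (4) via Proposition~\ref{KerfA}(3), which is cleaner in that it makes explicit why establishing (4) already forces (2) through the general machinery of Section~\ref{Section:Quotients}; the paper's argument has the compensating virtue of being self-contained and not requiring (4).

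One citation issue in your argument for (5): you invoke Lemma~\ref{kk}(2) for the splitting $A\simeq W_2(\F_p)\times\oU$, but that lemma is stated under the hypotheses of Proposition~\ref{choice}, where $\oU$ is \emph{defined} by \eqref{oUdef}---the very identification (5) asserts. The argument you want is really the \emph{proof} of Lemma~\ref{kk}(2), which only needs \eqref{del} (your (4)) and Proposition~\ref{KerfA}(2); citing Proposition~\ref{KerfA}(2) together with Lemma~\ref{maximalabeliansubgroupwitt} avoids the circularity. Alternatively, the paper handles (5) by citing Lemma~\ref{Kerf}(2) for the inclusion and then comparing cardinalities (implicitly using Lemma~\ref{suc}(2)), which is a bit shorter.
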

\begin{proof}
(1) We have 
\[
E_R(x)=R(x)^{p^{n-1}}+\sum_{i=1}^{n/2} 
x^{p^{n-2i}}=\sum_{i=1}^{n/2} x^{p^{n+2i-2}}
+\sum_{i=1}^{n/2} 
x^{p^{n-2i}}=\sum_{i=0}^{n-1} x^{p^{2i}}, 
\] 
and hence $V_R=\Ker \Tr_{p^{2n}/p^2} \subset \F_{p^{2n}}$. 
Since 
$\F_{p^{2n}} \subset \Ker \Tr_{p^{4n}/p}
=\{x^p+x \mid x \in \F_{p^{4n}}\}$, it follows that $H_R \subset \F_q^2$.

(2) 
Let $e \coloneqq n-1$. 
Then 
\[
F(F(x))=\sum_{i=0}^e \sum_{j=0}^e x^{p^{i+j}} 
=\sum_{k=0}^{2e}(k+1) x^{p^k}=\sum_{i=0}^e x^{p^{2i}}=E_R(x). 
\]
According to Lemma~\ref{adjoint}, for any $t\in\F_q$,
there exists a polynomial $d_t(x)\in\F_q[x]$ with $d_t(0)=0$ such that
\[
d_t(x)^p+d_t(x)=tF(x)+F^\ast(t)x.
\]
Substituting 
$F(x)$ into $x$ and using the identity  $E_R(x)=F(F(x))$, we obtain
\[ d_t(F(x))^p+d_t(F(x))=t E_R(x)+F^\ast(t) F(x). \] 
Now let $a\in\overline A=\Ker F$, and substitute $t=a^{p^e}$ 
into the above identity. Then
\begin{equation}\label{de}
d_{a^{p^e}}(F(x))^p+d_{a^{p^e}}(F(x))=a^{p^e} E_R(x)
\end{equation}
since $F^\ast(a^{p^e})=F(a)=0$. 
From \eqref{omega} and $E_R(a)=0$, 
we see that 
\[
\omega_R(x,a)^p+\omega_R(x,a)=a^{p^e} E_R(x).
\]
Moreover, 
since $d_{\a^{p^e}}(0)=\omega_R(0,a)=0$, it follows that 
$\omega_R(x,a)=d_{a^{p^e}}(F(x))$.  
In particular, for any 
$a' \in \overline{A}=\Ker F$, we obtain   
\[
\omega_R(a',a)=d_{a^{p^e}}(F(a'))=d_{a^{p^e}}(0)=0.
\] 
This shows that $\overline{A}$ is totally isotropic.  
Since $|\overline{A}|=p^e$ and $|V_R|=p^{2e}$, the maximality of 
$\overline{A}$ follows. 

(3) All assertions are directly checked. 

(4) Let $\sim$ be as in Lemma \ref{adjoint}. 
The claim follows from the following calculation.
\begin{align*}
f_{\oU}(x)^{p+1}&=\sum_{0\le i,j\le (n/2)-1} x^{p^{2i+1}+p^{2j}} \\
&=\sum_{0\le i<j\le (n/2)-1} x^{p^{2i+1}(p^{2(j-i)-1}+1)}+\sum_{0\le j \le i\le (n/2)-1} x^{p^{2j}(p^{2(i-j)+1}+1)} \\
& \sim \sum_{0\le i<j\le (n/2)-1} x^{p^{2(j-i)-1}+1}+\sum_{0\le j \le i\le (n/2)-1} x^{p^{2(i-j)+1}+1}  \\
&=\sum_{k=1}^{(n/2)-1}\sum_{i=0}^{(n/2)-k-1} 
x^{p^{2k-1}+1}+\sum_{k=1}^{n/2}\sum_{j=0}^{(n/2)-k} x^{p^{2k-1}+1}=\sum_{k=1}^{n/2} x^{p^{2k-1}+1}=xR(x). 
\end{align*}

(5) 
From Lemma~\ref{Kerf} (2), applied to
$(R_1(x),R_2(x),f(x))=(R(x),x^p,f_{\oU}(x))$ and $\oU=\Ker f_{\oU}$,
it follows that
$\oU \subset 
\Ker (\overline{A} \to \F_p,\ a \mapsto f_R(a,a))$.
Since both sides have the same cardinality by Lemma \ref{suc}, the inclusion is an equality.

(6) 
It follows from (4), $\oU \subset \F_q$, and Lemma~\ref{phif} (2). 
\end{proof}

\section{Supersingular elliptic curves as quotients of van der Geer--van der Vlugt curves}
\label{Section:supersingular}

In this section, we show that supersingular elliptic curves naturally 
appear as quotients of the curve $C_S$ defined by 
\[
  y^p+y = x^{p+1}+a_0 x^2 \qquad (a_0 \in \F_q)
\]
and the defining equations of these elliptic 
curves can be written explicitly. 
Let $R(x) \in \mathscr{A}$ with $\deg R>1$. 
Using the finite \'etale morphism 
$C_R \to C_S$ in \eqref{rr}, 
we obtain finite \'etale morphisms  
from $C_R$ to supersingular elliptic curves.
In the case where $p=p_0$, 
similar results are proved in \cite[Section 9]{GV}.

We use the notation from Subsection \ref{LangTorsor}. Assume that there exists 
$\a \in \F_q$ such that 
$\a^{p^{-1}}+\a=a_0+1$ (cf.\ Lemma~\ref{lemmaalpha}). 
Let $(a,b) \in W_2(\F_p)$ with $a \neq 0$. 
Let $T_{\alpha, (a,b)} \subset W_{2,\F_q}$ denote the closed subscheme defined by the closed immersion 
\[ 
\A_{\F_q}^1 \hookrightarrow W_{2,\F_q}, \quad s \mapsto 
(a,b)\cdot (1,\alpha) \cdot (s,0)=(a s, (a^2\a+b) s^2).
\]
If $(a,b)=(1,0)$, then $T_{\a,(1,0)}$ coincides with 
$T_{\a}$ in \eqref{coo}. 

There exists a morphism 
\[
h_0 \colon L_p^{-1}(T_{\a}) \to L_2^{-1}(T_{\alpha, (a,b)}), \quad  (x,y) \mapsto \sum_{i=0}^{f_0-1}\left((ax)^{2^i}, (bx^2+a^2y)^{2^i}\right). 
\]
This morphism is well-defined because for $(x,y) \in L_p^{-1}(T_{\a})$, we have 
\begin{gather}\label{com1}
\begin{aligned}
L_2\left(\sum_{i=0}^{f_0-1}\left((ax)^{2^i}, (bx^2+a^2y)^{2^i}\right)\right)&=((ax)^p, (bx^2+a^2y)^p)
-(ax,bx^2+a^2 y) \\
&=((ax)^p, (bx^2+a^2y)^p)
+(ax,bx^2+a^2 y+(ax)^2) \\
&=(a(x^p+x), b(x^p+x)^2+a^2(y^p+y+x^{p+1}+x^2))
\\&=(a s, (a^2 \a+b) s^2) \in T_{\a,(a,b)}, 
\end{aligned}
\end{gather}
where we set $s \coloneqq x^p+x$ and
use $y^p+y+x^{p+1}+x^2=\a s^2$ in \eqref{coo} at the last 
equality. 
Let $\Tr_{(a,b)} \colon W_2(\F_p) \to W_2(\F_2)$
be as in \eqref{trab}. 
\begin{proposition}\label{dec}
We write $p = 2^{f_0}$. 
The morphism $h_0$ induces an isomorphism 
\[
h \colon L_p^{-1}(T_{\a})/\Ker \Tr_{(a,b)}\xrightarrow{\sim} L_2^{-1}(T_{\alpha, (a,b)}), \quad 
(x,y) \mapsto \sum_{i=0}^{f_0-1}\left((ax)^{2^i},(bx^2+a^2y)^{2^i}\right). 
\]
\end{proposition}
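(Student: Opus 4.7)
The plan is to identify the scheme morphism $h_0$ as the composition
\[ h_0 = \Tr_{p/2} \circ \mu_{(a,b)}, \]
where $\mu_{(a,b)} \colon W_{2,\F_q} \to W_{2,\F_q}$ denotes multiplication by $(a,b)$ in the Witt vector ring. Indeed $(a,b)\cdot(x,y) = (ax,\,bx^2+a^2y)$, and $\Tr_{p/2}$ is realized schematically as $\sum_{i=0}^{f_0-1} F^i$ with $F$ the Witt Frobenius; together these recover the stated formula for $h_0$. The well-definedness of $h_0$ (and in particular that on bases it sends $s \in T_\alpha$ to the point of $T_{\alpha,(a,b)}$ with coordinate $s$, via the relation $L_2(h_0(x,y)) = (as,(a^2\alpha+b)s^2)$) was verified in the paragraph preceding the proposition.

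The first step is to check equivariance of $h_0$ for the group homomorphism $\Tr_{(a,b)} \colon W_2(\F_p) \to W_2(\F_2)$. Since the Galois group $W_2(\F_p)$ of $L_p$ acts on $L_p^{-1}(T_\alpha)$ by Witt translation $(x,y) \mapsto (x,y)+(u,v)$, distributivity of Witt multiplication over Witt addition, combined with additivity of the trace, gives
\begin{align*}
h_0\bigl((x,y)+(u,v)\bigr) &= \Tr_{p/2}\bigl((a,b)\cdot(x,y) + (a,b)\cdot(u,v)\bigr) \\
&= h_0(x,y) + \Tr_{(a,b)}(u,v).
\end{align*}
The element $\Tr_{(a,b)}(u,v) \in W_2(\F_2)$ is in the Galois group of $L_2$ and acts on $L_2^{-1}(T_{\alpha,(a,b)})$ again by Witt translation, so the above equality is precisely the desired equivariance.

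The second step is to deduce the isomorphism. Equivariance forces $h_0$ to factor through the quotient by $\Ker \Tr_{(a,b)}$, yielding $h$. Since $a \neq 0$, the element $(a,b)$ is a unit in $W_2(\F_p)$, so $\mu_{(a,b)}$ is an automorphism, and hence $\Tr_{(a,b)}$ is surjective by Lemma \ref{surjTr}. The quotient $L_p^{-1}(T_\alpha)/\Ker \Tr_{(a,b)}$ is therefore a $W_2(\F_2)$-torsor over $T_\alpha \simeq \A^1_{\F_q}$, and $L_2^{-1}(T_{\alpha,(a,b)})$ is a $W_2(\F_2)$-torsor over $T_{\alpha,(a,b)} \simeq \A^1_{\F_q}$. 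The morphism $h$ covers the natural identification of these two copies of $\A^1$ through the common $s$-coordinate, and is $W_2(\F_2)$-equivariant; any equivariant map of torsors over a common base is automatically an isomorphism, so $h$ is an isomorphism. No real obstacle is expected: the whole argument reduces to the recognition $h_0 = \Tr_{p/2} \circ \mu_{(a,b)}$, after which equivariance is a short Witt vector computation and the conclusion follows by a standard torsor argument combined with Lemma \ref{surjTr}.
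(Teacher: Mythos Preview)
Your proof is correct and follows essentially the same approach as the paper: both arguments rest on the surjectivity of $\Tr_{(a,b)}$ (via Lemma \ref{surjTr}) to identify the quotient $L_p^{-1}(T_\alpha)/\Ker\Tr_{(a,b)}$ as a $W_2(\F_2)$-covering of $\A^1$, and then compare it with the $W_2(\F_2)$-covering $L_2^{-1}(T_{\alpha,(a,b)}) \to \A^1$ over the common base. The paper concludes by degree counting in a commutative diagram, whereas you phrase the same step as ``an equivariant map of torsors over a common base is an isomorphism''; these are equivalent formulations, and your explicit verification of the equivariance $h_0((x,y)+(u,v)) = h_0(x,y) + \Tr_{(a,b)}(u,v)$ is a welcome addition that the paper leaves implicit.
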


\begin{proof}
Since $a \ne 0$, we have $(a, b) \in W_2(\F_p)^{\times}$, as explained before Example~\ref{wex}. 
Hence, by Lemma~\ref{surjTr}, the map 
$\Tr_{(a,b)} \colon W_2(\F_p) \to 
W_2(\F_2)$ is surjective, 
and induces an isomorphism 
\[ W_2(\F_p)/\Ker \Tr_{(a,b)} \xrightarrow{\sim} W_2(\F_2). \] 
Recall that, for $(\alpha,\beta) \in W_2(\F_p)$, 
\[
\Tr_{(a,b)}(\alpha,\beta)
=\Tr_{p/2}((a,b)\cdot(\alpha,\beta))
=\sum_{i=0}^{f_0-1}\bigl((a \alpha)^{2^i},(b \alpha^2+a^2 \beta)^{2^i}\bigr).
\]
For $(x,y) \in L_p^{-1}(T_{\a})$ and $(\alpha,\beta) \in \Ker \Tr_{(a,b)}$, we have
\[
h_0((x,y)+(\alpha,\beta))
=h_0(x+\alpha, y+\beta+\a x)
= h_0(x,y)+\Tr_{(a,b)}(\alpha,\beta)
= h_0(x,y).
\]
Hence $h_0$ factors through the quotient morphism
\[
L_p^{-1}(T_{\a}) \to L_p^{-1}(T_{\a})/\Ker \Tr_{(a,b)}.
\]

By \eqref{com1}, we have the following commutative diagram 
\[
\xymatrix{
L_p^{-1}(T_{\a}) \ar@/^25pt/[rr]^-{h_0}\ar[r]\ar[dr]_-{\pi_0} & L_p^{-1}(T_{\a})/\Ker \Tr_{(a,b)} \ar[r]^-h\ar[d]^-{\pi_1} &
L_2^{-1}(T_{\alpha, (a,b)}) \ar[d]^-{\pi_2}\\
 & T_{\a} \ar[r]^-{h_1}_-{\simeq} & T_{\alpha, (a,b)},
}
\]
where $\pi_0, \pi_1, \pi_2$ denote the natural projections, and
$h_1$ is defined by
\[
(s,\alpha s^2) \mapsto (as, (a^2\alpha+b)s^2).
\]
Since $\pi_0$ is a finite \'etale Galois covering with 
Galois group $W_2(\F_p)$, the morphism $\pi_1$ is 
a finite \'etale Galois covering with 
Galois group $W_2(\F_p)/\Ker \Tr_{(a,b)} \simeq W_2(\F_2)$. 
Since $\pi_1$ and $\pi_2$ are finite \'etale of degree $4$, the finite morphism $h$ is an isomorphism.
\end{proof}

\begin{proposition}\label{subel}
Let $c \coloneqq (\alpha+(b/a^2))^{1/2}$. 
Then the subscheme 
$L_2^{-1}(T_{\alpha, (a,b)})$ is defined by the equation
\begin{equation}\label{eleq}
z^2+z = x^3 + \bigl(c^2+c+1\bigr)x^2, 
\end{equation}

which is an affine equation defining a supersingular elliptic curve (cf.\ Remark \ref{quartic}). 
\end{proposition}

\begin{proof}
We have $L_2(x,y)=(x^2+x, y^2+y+x^3+x^2)$ by \eqref{coo-1} for $p=2$.  
By the definition of $T_{\alpha,(a,b)}$, the subscheme 
$L_2^{-1}(T_{\alpha, (a,b)})$ is defined by 
\[
x^2+x = a s, \qquad 
y^2+y = x^3+x^2+(a^2 \alpha + b)s^2. 
\]
By eliminating $s$ using the first equation,
substituting it into the second, and setting
$z=y+cx^2$, we obtain the required equation.
\end{proof}

Let $R(x) \in \mathscr{A}$ with $\deg R>1$. 
Let $A \subset H_R$ be a maximal 
abelian subgroup such that $A \subset \F_q^2$. Let  
$\oU \coloneqq  \{x \in \pi(A) \mid 
f_R(x,x)=0\}$, where $\pi \colon H_R \to V_R,\ 
(a,b) \mapsto a$. 
\begin{corollary}\label{qel}
Let $E_{\a,a,b}$ denote the affine curve 
defined by \eqref{eleq}. 
There exists a finite \'etale Galois covering 
$C_R \to E_{\a,a,b}$ with Galois group
$(\Ker \Tr_{(a,b)}) \times \oU$. 
\end{corollary}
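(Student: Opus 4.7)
The plan is to chain together the finite \'etale coverings constructed earlier in the excerpt and to identify the Galois group of the composite. Since $\deg R>1$, Proposition \ref{choice} produces a finite \'etale Galois covering $\phi_{R,S}\colon C_R\to C_S$ with Galois group $\oU$, where $C_S$ is associated with some $S(x)=x^p+s_0 x$. Lemma \ref{kk} (1) supplies $\alpha\in\F_q$ with $\alpha^{p^{-1}}+\alpha=s_0+1$; Lemma \ref{dc} identifies $C_S$ with $L_p^{-1}(T_\alpha)$ over $W_{2,\F_q}$; Proposition \ref{dec} yields a finite \'etale Galois covering $L_p^{-1}(T_\alpha)\to L_2^{-1}(T_{\alpha,(a,b)})$ with Galois group $\Ker\Tr_{(a,b)}$; and Proposition \ref{subel} identifies the target with the affine curve $E_{\alpha,a,b}$ defined by \eqref{eleq}. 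Composing these four steps produces the desired finite \'etale morphism $C_R\to E_{\alpha,a,b}$, of degree $|\Ker\Tr_{(a,b)}|\cdot|\oU|$.

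It remains to verify that this composite is a Galois covering with group $(\Ker\Tr_{(a,b)})\times\oU$. The cleanest route is to pass through the abelian group $A$ and the splitting \eqref{AUW}. Both $\oU$ and $\Ker\Tr_{(a,b)}\subset W_2(\F_p)$ embed as subgroups of $A$, and, since $A$ is abelian, they commute and together generate $H\coloneqq\Ker\Tr_{(a,b)}\times\oU\subset A$. Because $C_R\to C_R/A\simeq\A^1_{\F_q}$ is a Galois covering with abelian Galois group, the intermediate quotient $C_R\to C_R/H$ is automatically Galois with group $H$. By construction $C_R/\oU=C_S$ and $C_S/\Ker\Tr_{(a,b)}=E_{\alpha,a,b}$, hence $C_R/H=E_{\alpha,a,b}$, and the composite $C_R\to E_{\alpha,a,b}$ is Galois with group $H$, as required.

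The only step requiring attention is the compatibility of the several a priori distinct identifications of $W_2(\F_p)$ in play: the copy embedded in $A$ via \eqref{AUW}, the Galois group of the Lang torsor $L_p$, and the group acting on $C_S$ through Lemma \ref{maximalabeliansubgroupwitt}. One must confirm that these agree under the isomorphism $\iota$ of Lemma \ref{dc}, so that the subgroup $\Ker\Tr_{(a,b)}\subset W_2(\F_p)$ acting on $L_p^{-1}(T_\alpha)$ really corresponds to the subgroup of $A$ arising from the splitting. This is precisely the equivariance already verified in the proof of Lemma \ref{QL} (through the isomorphism $f_{\alpha^{p^{-1}}}$), so the main obstacle is merely bookkeeping rather than substantive mathematics, and the corollary follows at once from the chain of identifications above.
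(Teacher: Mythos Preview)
Your proof is correct and follows essentially the same route as the paper: compose $C_R\to C_S$ (Proposition \ref{choice}), $C_S\simeq L_p^{-1}(T_\alpha)$ (Lemma \ref{dc}), $L_p^{-1}(T_\alpha)\to L_2^{-1}(T_{\alpha,(a,b)})$ (Proposition \ref{dec}), and identify the target via Proposition \ref{subel}, then invoke the splitting \eqref{AUW} to read off the Galois group. The paper's proof dispatches the Galois group step with the single phrase ``From \eqref{AUW}, the claim follows,'' whereas you spell out why the composite is Galois with group $(\Ker\Tr_{(a,b)})\times\oU$ and correctly flag that the needed equivariance of $\iota$ was already checked in the proof of Lemma \ref{QL}; this extra care is welcome but does not constitute a different argument.
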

\begin{proof}
We have a sequence of morphisms 
\[
C_R\xrightarrow{\eqref{rr}} C_S\cong L_p^{-1}(T_\alpha)\xrightarrow{h_0}L_2^{-1}(T_{\alpha,(a,b)})=E_{\alpha,a,b}\xrightarrow{(x,y)\mapsto x}\A^1_{\F_q}, 
\]
where the identification $C_S\cong L_p^{-1}(T_\alpha)$ is given by Lemma~\ref{dc}. By construction, the composite map $C_R\to \A^1_{\F_q},\ (x,y) \mapsto a(f_{\oU}(x)^p+f_{\oU}(x))$ is an abelian covering with Galois group $A$ (cf.\ \eqref{phic} and Proposition~\ref{choice}). Thus the intermediate covering $C_R\to E_{\alpha,a,b}$ is also an abelian covering. Let $G$ denote its Galois group. It fits into a short exact sequence 
\[
0 \to \overline{U} \to G \to \Ker \Tr_{(a,b)} \to 0. 
\]
By Lemma~\ref{kk} (2), this sequence splits. The assertion follows. 
\end{proof}
Let $\xi_2 \in W_2(\F_2)^{\vee}$ be a faithful character and 
define $\xi_q \coloneqq \xi_2 \circ \Tr_{q/2} \in W_2(\F_q)^{\vee}$. 
We also set $\sqrt{-1} \coloneqq \xi_2(1,0)$.
If $H_R \subset \F_q^2$, the Frobenius eigenvalues of 
$H^1(\overline{C}_{R,\F},\overline{\Q}_{\ell})$ are all equal to a single Frobenius eigenvalue of a supersingular elliptic curve.
We obtain the same conclusion as in Corollary~\ref{qaq2} (1). 
\begin{corollary}
We write $E_{\a}$ for $E_{\a,1,0}$. 
Assume $H_R \subset \F_q^2$. Then 
$\mathrm{Fr}_q$ acts on 
$H^1(\overline{C}_{R,\F},\overline{\Q}_{\ell})$
as scalar multiplication by $\xi_q(\a,0)^{-1} \cdot (-1-\sqrt{-1})^{[\F_q:\F_2]}$. 
In particular, $\overline{C}_R$ is a supersingular curve. 
\end{corollary}
\begin{proof}
We have the finite \'etale morphism
$
C_R \to E_{\a}
$
by  
Corollary~\ref{qel} for $(a,b)=(1,0)$. 
This morphism extends to a finite morphism 
$
\overline{C}_R \to \overline{E}_{\a},
$
which induces an injection 
\[
H^1(\overline{E}_{\a,\F},\overline{\Q}_{\ell}) 
\hookrightarrow H^1(\overline{C}_{R,\F},\overline{\Q}_{\ell}). 
\]
Since $H_R \subset \F_q^2$ and 
Lemma~\ref{Stone} (3), the Frobenius $\mathrm{Fr}_q$ acts on $H^1(\overline{C}_{R,\F},\overline{\Q}_{\ell})$ as scalar multiplication. 
By Remark \ref{quartic}, the scalar 
equals 
\[
\xi_q(c,0)^{-1} \cdot G_{\xi_q}=\xi_q(\a,0)^{-1} \cdot (-1-\sqrt{-1})^{[\F_q:\F_2]}
\]
since $c=\a^{1/2}$ and by Lemma~\ref{HDGr}. 
Since this scalar is 
a root of unity times $\sqrt{q}$, the last claim follows.  
\end{proof}
\subsection*{Acknowledgements}
The authors thank the referee for a careful reading of the manuscript and for many helpful and constructive suggestions.\\
T. I. is supported by JSPS KAKENHI Grant Numbers 23K20786, 24K21512 and 25K00905.\\
D. T. is supported by JSPS KAKENHI Grant Number 25KJ0122.\\
T. T. is supported by JSPS KAKENHI Grant Numbers 25K06959 and 23K20786.

\end{document}